\DeclareMathAlphabet{\mathbbold}{U}{bbold}{m}{n}    
\newtheorem{theorem}{Theorem}
\newtheorem{lemma}[theorem]{Lemma}
\newtheorem{proposition}[theorem]{Proposition}
\newtheorem{corollary}[theorem]{Corollary}
\theoremstyle{definition}
\theoremstyle{remark}
\newtheorem{rmk}{Remark}
\newtheorem{remark}[rmk]{Remark}
\newtheorem{example}{Example}
\newtheorem*{example*}{Example}
\newcommand{\bqn}{\begin{equation}}
\newcommand{\eqn}{\end{equation}}
\newcommand{\distr}{\mathcal{D}}
\newcommand{\g}{\gamma}
\newcommand{\R}{\mathbb{R}}
\newcommand{\eps}{\varepsilon}
\newcommand{\JJ}{\mathcal{J}}
\newcommand{\lam}{\lambda}
\newcommand{\ver}{\mathcal{V}}
\newcommand{\mc}[1]{\mathcal{#1}}
\newcommand{\wt}{\widetilde}
\newcommand{\nn}{\nonumber}
\newcommand{\id}{\mathrm{id}}
\DeclareMathOperator{\spn}{\mathrm{span}}
\DeclareMathOperator{\trace}{\mathrm{tr}}
\title[Bonnet-Myers for quaternionic contact structures]{A Bonnet-Myers type theorem for quaternionic contact structures}
\date{\today}
\author{Davide Barilari$^{\flat}$}
\address{$^{\flat}$ Institut de Math\'ematiques de Jussieu-Paris Rive Gauche, UMR CNRS 7586, Universit\'e Paris-Diderot,
Batiment Sophie Germain, Case 7012, 75205 Paris Cedex 13, France}
\email{davide.barilari@imj-prg.fr}
\author{Stefan Ivanov$^{\sharp}$}
\address{$^{\sharp}$ University of Sofia, Faculty of Mathematics and Informatics, blvd. James Bourchier 5,
1164, Sofia, Bulgaria, \& Institute of Mathematics and Informatics, Bulgarian Academy of Sciences}
\email{ivanovsp@fmi.uni-sofia.bg }
\begin{document}

\begin{abstract}
We prove a Bonnet-Myers type theorem for quaternionic contact manifolds of dimension bigger than 7. If the manifold is complete with respect to the natural sub-Riemannian distance and satisfies a natural Ricci-type bound expressed in terms of derivatives up to the third order of the fundamental tensors, then the manifold is compact and we give a sharp bound on its sub-Riemannian diameter.
\end{abstract}

\maketitle
\tableofcontents

\section{Introduction and main results}\label{s:intro}

Bonnet-Myers theorem is classical among comparison theorems in Riemannian geometry  \cite{myers}. It states that, if the Ricci curvature of a complete $d$-dimensional  Riemannian manifold $(M,g)$ is bounded below by $(d-1)\kappa > 0$, then the manifold $M$ is compact and its diameter is at most $\pi/\sqrt{\kappa}$.

Several generalizations of this theorem, in variuos smooth settings (and even in the non-smooth one of metric measure spaces, see for instance \cite{ohta}) have been recently investigated, introducing suitable notion of curvature or Ricci bound. Among these, different versions of Bonnet-Myers theorem have been obtained in the setting of sub-Riemannian geometry (cf.\ discussion in Section \ref{s:discuss}).

Recall that a sub-Rieman\-nian structure $(\distr,g)$ on a smooth, connected manifold $M$ of dimension $d \geq 3$ is defined by   a vector distribution $\distr$ of constant rank $k \leq d$ and a smooth metric $g$ assigned on $\distr$. The distribution is required to satisfy the H\"ormander condition, or to be bracket-generating, that means
\begin{equation}\label{eq:bg}
\spn\{[X_{j_1},[X_{j_2},[\ldots,[X_{j_{m-1}},X_{j_m}]]]](x)\mid m \geq 1\} = T_x M, \qquad \forall x \in M,
\end{equation}
for some (and then any) set $X_1,\ldots,X_k \in \Gamma(\distr)$ of local generators for $\distr$.

Given a sub-Riemannian structure on $M$, the \emph{sub-Rieman\-nian distance} is defined by:
\begin{equation*}
d_{SR}(x,y) = \inf\{\ell(\gamma)\mid \gamma(0) = x,\, \gamma(T) = y,\, \gamma \text{ horizontal} \}.
\end{equation*}
where a Lipschitz continuous path $\gamma : [0,T] \to \R$ is \emph{horizontal} if it satisfies $\dot\gamma(t) \in \distr_{\gamma(t)}$ for almost every $t$, and in this case we set
\begin{equation*}
\ell(\gamma) = \int_0^T \sqrt{g(\dot\gamma(t),\dot\gamma(t))}dt.
\end{equation*}
By the classical Chow-Rashevskii theorem (see for instance \cite[Chapter 3]{nostrolibro}), the condition \eqref{eq:bg} implies that $d_{SR}$ is finite and continuous on $M\times M$. We say that the sub-Rieman\-nian manifold is complete if $(M,d_{SR})$ is complete as a metric space.

A sub-Riemannian Bonnet-Myers theorem states, under suitable curvature conditions, that the manifold $M$ is compact and gives a bound on its sub-Riemannian diameter. For more details on sub-Riemannian geometry we refer to classical references such as \cite{bellaiche,montgomerybook} and the more recent ones \cite{nostrolibro,notejean,noterifford}.

\begin{remark} \label{r:one} Notice that if the sub-Riemannian structure is defined as the restriction of a Riemannian metric $g$ on $M$ to a distribution $\distr$, in general the sub-Riemannian diameter is bigger than the Riemannian one. Thus, even if one is able to control the Riemannian curvature of $(M,g)$ and apply a classical Bonnet-Myers theorem, one can prove compactness of $M$, but has no a priori estimate on the sub-Riemannian diameter.
\end{remark}

In this paper we focus on quaternionic contact structure.  A quaternionic
contact (qc) structure, introduced in \cite{Biq1}, appears
naturally as the conformal boundary at infinity of the
quaternionic hyperbolic space. The qc structure gives a natural geometric setting
for the quaternionic contact Yamabe
problem, \cite{GV,Wei,IMV,IMV1}. A particular case of this problem
amounts to find the extremals and the best constant in the $L^2$
Folland-Stein Sobolev-type embedding, \cite{F2} and \cite{follandstein}, with a
complete description of the extremals and the best constant  on
the  quaternionic Heisenberg groups \cite{IMV1,IMV3,IMV4}. 

A quaternionic contact structure  carry a natural sub-Riemannian structure with a codimension three distribution. Curvature conditions are expressed in terms on bounds on standard curvature tensors of quaternionic contact geometry. These conditions can be expressed only in terms of sub-Riemannian quantities (cf.\ Theorems \ref{t:main1} and \ref{t:main2}) and are obtained through the computation of the sub-Riemannian coefficients of the generalized Jacobi equation, first introduced in \cite{geometryjacobi1,lizel} and subsequently developed in \cite{BR-comparison,BR-connection,curvature}.

\subsection{Quaternionic contact structure}  
A quaternionic contact  manifold $(M, \mathbb{Q},g)$ is a $(4n+3)$%
-dimensional manifold $M$ with a codimension-three distribution
$\distr$  equiped with $\mathrm{Sp}(n)\mathrm{Sp}(1)$ structure. Explicitly, the distribution $\distr$  is locally described as
the kernel of a  1-form $\eta=(\eta_1,\eta_2,\eta_3)$ with
values in $\mathbb{R}^3$ together with a compatible Riemannian
metric $g$ and a rank-three bundle $\mathbb{Q}$ consisting of
endomorphisms of $\distr$ locally generated by three almost complex
structures $I_1,I_2,I_3:\distr\to \distr$ satisfying the identities of the
imaginary unit quaternions. Namely, if $\{\alpha,\beta,\tau\}$ is any cyclic permutation of $\{1,2,3\}$ we have
\bqn \label{eq:relaz1}
I_\alpha I_\beta=-I_\beta I_\alpha=I_\tau,
\quad
I_{\alpha}^{2}=I_{\beta}^{2}=I_{\tau}^{2}=I_\alpha I_\beta I_\tau=-\id_{|_\distr}.
\eqn 
Moreover $I_1,I_2,I_3$ are compatible with the metric $g$, in the following sense: for every $\alpha=1,2,3$ and $X,Y\in \distr$ we have
\begin{equation*} \label{eq:relaz2}
g(I_{\alpha}X,I_{\alpha}Y)=g(X,Y),\qquad
2g(I_{\alpha}X,Y) = d\eta_{\alpha}(X,Y).
\end{equation*}

From the sub-Riemannian view-point, these structures are \emph{fat}, i.e. for any non zero section $X$ of $\distr$, $TM$ is (locally) generated by $\distr$ and $[X,\distr]$. This is a direct consequence of the quaternionic relations of the almost complex structures. For completeness a proof is given in Section \ref{s:2}. The fat condition is open in the $C^1$ topology, however it gives some restriction on the rank $k$ of the distribution (for example $\dim M \leq 2k -1$, \cite[Prop.\ 5.6.3]{montgomerybook}).

\begin{example}[Quaternionic Hopf fibration] \label{ex:qhf}
A classical example of quaternionic contact structure is the quaternionic Hopf fibration
\begin{equation}
\mathbb{S}^3 \hookrightarrow \mathbb{S}^{4n+3} \xrightarrow{\pi} \mathbb{HP}^n, \qquad n \geq 1.
\end{equation}
Here $\distr = (\ker \pi_*)^\perp$ is the orthogonal complement of the kernel of the differential of the Hopf map $\pi$, and the sub-Rieman\-nian metric is the restriction to $\distr$ of the standard round metric on $\mathbb{S}^{4n+3}$.  The sub-Riemannian distance on the quaternionic Hopf fibration can be computed explicitly and its diameter is $\pi$, as it is proved in \cite{BW14}. This example is one of the simplest (non-Carnot) sub-Rieman\-nian structures of corank greater than $1$, and is included in the sub-class of \emph{$3$-Sasakian} structures.
\end{example}

\begin{example}[Quaternionic Heisenberg group]
An example of quaternionic contact structure that is not 3-sasakian is the quaternionic Heisenberg group. It is defined as
$$\R^{4n+3}=\mathbb{H}^{n}\oplus \mathrm{Im}(\mathbb{H})$$
endowed with the group law
$$(z,w)\cdot(z',w')=\left(z+z',w+w'+\frac12 \mathrm{Im}(z\bar z')\right).$$
If we take $\distr=\mathbb{H}^{n}$ (which has dimension $4n$) with the standard Euclidean metric, it easy to see that it is bracket generating and defines a quaternionic contact structure.
\end{example}

\subsection{Biquard connection, torsion and curvature}

On a qc manifold of dimension $4n+3$ with $n\geq 2$ with a fixed metric $g$
on the \emph{horizontal distribution} $\distr$ there exists a canonical
connection, called \emph{Biquard connection}, defined in
\cite{Biq1}.  Biquard shows that there exists a unique supplementary subspace $V$ to $\distr$ in $%
TM$ and a unique connection $%
\nabla$ with torsion $T$, such that:

\begin{itemize}

\item[(i)] $\nabla$ preserves the decomposition $H\oplus V$ and the
$\mathrm{Sp}(n)\mathrm{Sp}(1)$ structure on $\distr$,  $\nabla g=0$, $\nabla\sigma
\in\Gamma( \mathbb{Q})$ for $\sigma\in\Gamma(\mathbb{Q})$, and
its torsion on $\distr$ is given by $T(X,Y)=-[X,Y]_{|V}$;

\item[(ii)] for $\xi\in V$, the endomorphism $T(\xi,.)_{|\distr}$ of $\distr$ lies in\footnote{the perpendicular is computed with respect to the inner product
$\langle A \,|\, B\rangle = {\ \sum_{i=1}^{4n} g(A(e_i),B(e_i)),}$
for $A, B \in \text{End}(H)$.} $%
(\mathrm{sp}(n)\oplus \mathrm{sp}(1))^{\bot}\subset \mathrm{gl}(4n)$;

\item[(iii)] the connection on $V$ is induced by the natural
identification $\varphi
$ of $V$ with the subspace $\mathrm{sp}(1)$ of the endomorphisms of $\distr$, i.e., $%
\nabla\varphi=0$.
\end{itemize}


When the dimension of $M$ is at least eleven, \cite{Biq1} shows that 
the supplementary \emph{vertical distribution} $V$ is
(locally) generated by three \emph{Reeb vector fields}
$\xi_1,\xi_2,\xi_3$ determined by the conditions
\begin{equation}  \label{reeb1}
\begin{aligned}
\eta_{\alpha}(\xi_{\beta})=\delta_{{\alpha}{\beta}}, \qquad
(\xi_{\alpha}\lrcorner d\eta_{\alpha})_{|_{\distr}}=0, \qquad
(\xi_{\alpha}\lrcorner d\eta_{\beta})_{|_{\distr}}=-(\xi_{\beta}\lrcorner
d\eta_{\alpha})_{|_{\distr}},
\end{aligned}
\end{equation}
where $\lrcorner$ denotes the interior multiplication: more explicitly $X\lrcorner \Phi=\Phi(X,\cdot)$ where $X$ is a vector field and $\Phi$ is a differential 2-form.

\begin{remark} In this paper we restrict our attention to quaternionic contact structure of dimension strictly bigger that seven.
If the dimension of $M $ is seven Duchemin shows in \cite{D} that
if we assume, in addition, the existence of Reeb vector fields as
in \eqref{reeb1}, then the Biquard result holds. Henceforth,
by a qc structure in dimension $7$ we shall mean a qc structure
satisfying \eqref{reeb1}. This implies the existence of the
connection with properties (i), (ii) and (iii) above.
\end{remark}
The fundamental 2-forms $\omega_{\alpha}$ of the qc structure are
defined by 
\begin{equation*}
2\omega_{{\alpha}|\distr} =d\eta_{{\alpha}|\distr},\quad
\xi\lrcorner\omega_{\alpha}=0,\quad \xi\in V. 
\end{equation*}
The torsion restricted to $\distr$ has the form 
$$
T(X,Y)=-[X,Y]_{|V}=2\sum_{{\alpha}=1}^3\omega_{\alpha}(X,Y)\xi_{\alpha}
.$$

The properties of the Biquard connection are encoded in the torsion endomorphism $T(\xi,.)_{|\distr}$. It is  completely trace-free, $\mathrm{tr}( T(\xi,.)_{|\distr})=\mathrm{tr}T(\xi,.)_{|\distr}\circ I_{\alpha}=0$  and  can be decomposed into symmetric and skew-symmetric parts,  $T(\xi_{\alpha},.)_{|\distr}= T^0(\xi_{\alpha},.)_{|\distr}+I_{\alpha}u$, respectively where $u$
is a traceless symmetric (1,1)-tensor on $\distr$ which commutes with $I_{1},I_{2},I_{3}$, see \cite{Biq1}. When $n=1$ the tensor $u$ vanishes identically and the torsion is a symmetric tensor, $T_{\xi }=T_{\xi }^{0}$.

 The two $\mathrm{Sp}(n)\mathrm{Sp}(1)$-invariant trace-free symmetric 2-tensors $T^0, U$ on
$\distr$ defined in \eqref{Tcompnts} having the properties \eqref{propt}
determine completely  the symmetric and the skew-symmetric parts of torsion endomorphism, respectively \cite{IMV} (cf.\ \eqref{need} and \eqref{need1} in the Appendix.)

The \emph{qc-Ricci tensor} $\mathrm{Ric}$ and the
\emph{normalized qc-scalar curvature} $S$  of the Biquard connection are defined with the usual horizontal traces of the curvature of the Biquard connection (cf.\  \eqref{qscs} in the Appendix.)

 A qc structure is said to be \emph{qc-Einstein} if the horizontal
qc-Ricci tensor is a scalar multiple of the metric.
As shown in \cite{IMV,IMV2} the qc-Einstein condition is
equivalent to the vanishing of the torsion endomorphism of the
Biquard connection. In this case $S$ is constant and the vertical
distribution is integrable. It is also worth recalling that the
horizontal qc-Ricci tensors and the integrability of the vertical
distribution can be expressed in terms of the torsion of the
Biquard connection according to \eqref{sixtyfour} in the Appendix  (see \cite{IMV}, cf.\ also \cite{IMV1,IV,IPV1}) .

Any 3-Sasakian manifold has zero torsion endomorphism, and the
converse is locally true if in addition the qc scalar curvature  is a positive constant \cite{IMV}.

\subsection{Main results}
Our main result reads as follows, in terms of  the qc Ricci tensor and the curvature tensor associated with the Biquard connection.
\begin{theorem}\label{Bone-mayers}\label{t:main1}
Let  $(M, g, \mathbb{Q})$ be a $4n+3$-dimensional complete qc manifold with $n>1$. Assume that there exists a constant  $\kappa>0$ such that
\bqn \label{eq:trace00}
\mathrm{Ric}(X,X)- \sum_{\alpha=1}^3 R(X,I_\alpha X,I_\alpha X,X)\ge 4(n-1)\kappa,\qquad \forall\, X\in \distr.
\eqn
Then   $(M, g, \mathbb{Q})$ is compact manifold with finite fundamental group, and its sub-Riemannian diameter is not greater than $\pi/\sqrt{\kappa}$.
\end{theorem}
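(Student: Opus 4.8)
The plan is to follow the classical second-variation strategy of the Bonnet--Myers theorem, but carried out along sub-Riemannian geodesics using the generalized Jacobi equation rather than the Riemannian Jacobi equation. First I would fix a length-minimizing horizontal geodesic $\gamma:[0,\ell]\to M$ parametrized by arclength and realizing (a value close to) the sub-Riemannian distance between two points, and set up the index form / second variation of the energy along $\gamma$. The key analytic input is the generalized Jacobi equation referenced in the introduction (from \cite{geometryjacobi1,lizel,BR-comparison,curvature}): along a normal extremal there is a canonical decomposition of the ``vertical'' part of the Jacobi dynamics and a Riccati-type comparison available for the remaining ``horizontal'' Jacobi fields, whose zeroth-order coefficient is precisely a curvature operator $\mathfrak R_\gamma$ built from the Biquard curvature and torsion. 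The first step is therefore to identify this operator and to compute its partial trace over a suitable $(4n-1)$-dimensional (or $(4n)$-dimensional) subspace of admissible variation fields transverse to $\gamma$: I expect that this trace equals, up to the normalization $4(n-1)$, exactly the quantity $\mathrm{Ric}(\dot\gamma,\dot\gamma)-\sum_\alpha R(\dot\gamma,I_\alpha\dot\gamma,I_\alpha\dot\gamma,\dot\gamma)$ appearing in \eqref{eq:trace00}. This is the place where the fatness of the distribution and the specific $\mathrm{Sp}(n)\mathrm{Sp}(1)$ structure enter: the ``bad directions'' $I_\alpha\dot\gamma$ must be handled separately, and the reason the theorem needs $n>1$ is that one needs enough transverse horizontal directions complementary to $\{\dot\gamma, I_1\dot\gamma,I_2\dot\gamma,I_3\dot\gamma\}$ to make the averaging argument produce a strictly positive contribution.

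Granting that trace identity, the second step is the standard conjugate-point argument. Using the sub-Riemannian index form, I would insert the comparison test fields of the form $\sin(\sqrt\kappa\, t)\,E_i(t)$ (with $E_i$ a parallel-transported frame of the chosen transverse subspace along $\gamma$, and with the vertical components chosen to satisfy the structural constraints of the generalized Jacobi equation), sum the resulting index-form inequalities over $i$, and use the hypothesis \eqref{eq:trace00} together with the trace computation to conclude that if $\ell>\pi/\sqrt\kappa$ then the total index form is strictly negative on this family. Since a minimizing geodesic cannot have negative index, this forces $\ell\le\pi/\sqrt\kappa$, hence $d_{SR}\le\pi/\sqrt\kappa$ and, by completeness plus Hopf--Rinow for sub-Riemannian manifolds (minimizers exist and closed balls are compact), $M$ is bounded and complete, therefore compact. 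The finiteness of the fundamental group follows as usual by lifting the argument to the universal cover: the lifted qc structure satisfies the same curvature bound, so the universal cover also has bounded sub-Riemannian diameter and is compact, whence $\pi_1(M)$ is finite.

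The main obstacle is the first step — getting the right comparison object and the correct trace identity out of the generalized Jacobi equation. Unlike the Riemannian case, the Jacobi dynamics along a sub-Riemannian geodesic in a corank-three fat structure is a system with both a genuinely second-order (``horizontal--horizontal'') block and lower-order couplings to the vertical fibers, and only a distinguished invariant subspace of Jacobi fields obeys a clean Riccati inequality. I would need to verify that the curvature operator governing that block has partial trace equal to the left-hand side of \eqref{eq:trace00}; this is essentially the computation of the ``sub-Riemannian curvature coefficients'' announced before Theorem~\ref{t:main1}, and matching it with the qc-Ricci tensor and the full Biquard curvature tensor (using the identities collected in the Appendix, e.g.\ \eqref{sixtyfour} and the decomposition of the torsion into $T^0$ and $U$) is where the real work lies. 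A secondary subtlety is sharpness: to get $\pi/\sqrt\kappa$ rather than a weaker constant, the test-field family must be chosen so that the vertical constraints do not spoil the $\sin^2$ profile, and one must check (via the quaternionic Hopf fibration of Example~\ref{ex:qhf}, where the bound is attained with diameter $\pi$) that the normalization is optimal.
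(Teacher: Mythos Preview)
Your overall strategy matches the paper's: reduce the theorem to a trace bound on the sub-Riemannian curvature operator along extremals, then invoke a Riccati/index comparison.  The paper packages the second step as a black box, Theorem~\ref{t:BRmain} (quoted from \cite{BR-comparison}): if $\trace(\mc{R}_{cc}(t))\ge 4(n-1)\kappa$ along every length-parametrized extremal, then the diameter bound, compactness, and finite $\pi_1$ follow.  All of the actual work in the paper is the first step, namely Lemma~\ref{riccic0}, which is obtained via the canonical Darboux frame of Theorem~\ref{p:can} and the Hamiltonian/Fermi-frame computations of Sections~\ref{s:3}--\ref{s:5}.

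Two points where your expectations are slightly off.  First, the relevant block is the ``$c$''-block of dimension $4n-3$ (the kernel of $\mc L_{\dot\gamma}$ inside $\distr$), not $4n-1$ or $4n$; the ``$b$''-directions $I_\alpha\dot\gamma$ sit in a separate block with different structural equations and do \emph{not} enter the trace used for comparison.  Second, and more importantly, the trace is \emph{not} exactly $\mathrm{Ric}(\dot\gamma,\dot\gamma)-\sum_\alpha R(\dot\gamma,I_\alpha\dot\gamma,I_\alpha\dot\gamma,\dot\gamma)$: Lemma~\ref{riccic0} gives
\[
\trace(\mc{R}_{cc})=\mathrm{Ric}(\dot\gamma,\dot\gamma)-\sum_{\alpha=1}^3 R(\dot\gamma,I_\alpha\dot\gamma,I_\alpha\dot\gamma,\dot\gamma)+4(n-1)\|v\|^2,
\]
where $v$ is the vertical part of the covector along the extremal.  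The extra term $4(n-1)\|v\|^2\ge 0$ is what makes the hypothesis~\eqref{eq:trace00} sufficient; you would not have discovered it from a purely horizontal index-form ansatz, and it is the reason the naive test fields $\sin(\sqrt\kappa\,t)E_i$ with $E_i$ horizontal-parallel are not quite the right objects.  In corank~$3$ the Jacobi dynamics couples the $a,b,c$ blocks nontrivially (see the structural equations in Theorem~\ref{p:can}), and the canonical-frame formalism is precisely what isolates the clean Riccati block on which the comparison of \cite{BR-comparison} runs.  Your plan becomes a correct proof once you replace the ad hoc test fields by the canonical frame and recognize the extra $\|v\|^2$ contribution.
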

\begin{remark} 
The bound on the sub-Riemannian diameter given in Theorem \ref{t:main1} is sharp since the equality is attained for the quaternionic Hopf fibration, where $\kappa$ can be chosen equal to 1 and the sub-Riemannian diameter is $\pi$ (cf.\ Example \ref{ex:qhf} and Remark \ref{r:qhf}).
Moreover, since $R(X, X, X,X)=0$, the left hand side in \eqref{eq:trace00} is indeed a trace on a $4(n-1)$-dimensional subspace of $\distr$. 
\end{remark}
Theorem \ref{t:main1} can be also restated as follows, in terms of the horizontal part of torsion tensors and scalar curvature.
\begin{theorem}\label{Bone-mayers1}\label{t:main2}
Let  $(M, g, \mathbb{Q})$ be a $4n+3$-dimensional complete qc manifold with $n>1$. Assume that there exists a constant  $\kappa>0$ such that
\bqn \label{eq:main2}
2nT^0(X,X)+(4n-8)U(X,X)+2(n-1)S\ge 4(n-1)\kappa,\qquad \forall\, X\in \distr.
\eqn
Then  $(M, g, \mathbb{Q})$ is compact manifold with finite fundamental group, and its sub-Riemannian diameter not
greater than $\pi/\sqrt{\kappa}$.
\end{theorem}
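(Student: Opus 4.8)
The plan is to derive Theorem~\ref{t:main2} from Theorem~\ref{t:main1} by showing that the two hypotheses \eqref{eq:trace00} and \eqref{eq:main2} define, pointwise on $\distr$, one and the same inequality, once the curvature of the Biquard connection is expressed through its torsion. Precisely, I would establish the identity
\begin{equation*}
\mathrm{Ric}(X,X)-\sum_{\alpha=1}^{3}R(X,I_\alpha X,I_\alpha X,X)=2n\,T^0(X,X)+(4n-8)\,U(X,X)+2(n-1)\,S\,g(X,X),\qquad X\in\distr ,
\end{equation*}
after which the conclusion of Theorem~\ref{t:main2} --- compactness, finiteness of $\pi_1(M)$, and the sharp diameter bound $\pi/\sqrt{\kappa}$ --- is immediate: evaluating the identity on unit horizontal vectors turns \eqref{eq:main2} into \eqref{eq:trace00}, and Theorem~\ref{t:main1} applies verbatim.

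To prove the identity I would proceed in three steps, all of which amount to inserting the qc structure equations of the Biquard connection collected in the Appendix (going back to \cite{IMV}). First, I would use the decomposition of the horizontal qc-Ricci tensor, cf.\ \eqref{sixtyfour} and \eqref{qscs}: $\mathrm{Ric}(X,X)$ is a universal linear combination of $T^0(X,X)$, $U(X,X)$ and $S\,g(X,X)$ with coefficients depending only on $n$, reflecting the splitting $T(\xi_\alpha,\cdot)_{|\distr}=T^0(\xi_\alpha,\cdot)_{|\distr}+I_\alpha u$ of the torsion endomorphism into a symmetric and a skew-symmetric part. Second, I would compute the quaternionic-holomorphic term $\sum_{\alpha}R(X,I_\alpha X,I_\alpha X,X)$: using the first Bianchi identity for $\nabla$ --- which, the torsion being non-zero, carries the torsion correction terms --- together with the quaternionic relations \eqref{eq:relaz1}, the $g$-compatibility of the $I_\alpha$, and the properties $\nabla g=0$, $\nabla\sigma\in\Gamma(\mathbb{Q})$, this sum is again a universal linear combination of $T^0(X,X)$, $U(X,X)$ and $S\,g(X,X)$. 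This is, in fact, exactly the computation that produces the sub-Riemannian coefficients of the generalized Jacobi equation for qc structures, in the spirit of \cite{curvature,BR-comparison}. Third, subtracting the two expressions and using the curvature symmetries (in particular $R(X,X,X,X)=0$) together with the $\mathrm{Sp}(n)\mathrm{Sp}(1)$-invariance to collapse the irreducible pieces, the $n$-dependent constants combine into exactly $2n$, $4n-8$ and $2(n-1)$, consistent with the fact that the left-hand side of \eqref{eq:trace00} is a trace over a $4(n-1)$-dimensional subspace of $\distr$.

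The main obstacle will be the bookkeeping in the second and third steps. Because the Biquard connection is not torsion-free, its curvature tensor lacks the classical Riemannian and K\"ahler-type symmetries, the Bianchi identities pick up torsion terms, and the contributions of the three $\mathrm{Sp}(n)\mathrm{Sp}(1)$-irreducible components --- the $T^0$-part, the $U$-part, and the scalar part carried by $S$ --- must be disentangled with the correct, somewhat delicate, $n$-dependent coefficients. One must therefore argue from the precise qc structure equations recorded in the Appendix rather than from any classical analogue; the standing hypothesis $n>1$ is essential here, since for $n=1$ the tensor $u$ (hence $U$) vanishes identically and several of these formulas degenerate.
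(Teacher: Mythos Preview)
Your plan is correct and matches the paper's: Theorem~\ref{t:main2} is deduced from Theorem~\ref{t:main1} via exactly the identity you write (this is Lemma~\ref{t0u}), with the Ricci term read off from \eqref{sixtyfour} and the quaternionic-holomorphic sum computed separately and then subtracted. One point to sharpen: in your second step the first Bianchi identity \eqref{bian01} is used, but only auxiliarily (to evaluate the cyclic term $b(I_1X,I_2X,I_3X,X)$); the workhorse is the specific qc curvature identity \eqref{comp1}, into which one plugs $Y=Z=I_\alpha X$, $V=X$ and sums over $\alpha$ to obtain $\sum_\alpha R(X,I_\alpha X,I_\alpha X,X)=2T^0(X,X)+18U(X,X)+6S$, whence subtraction from \eqref{sixtyfour} gives the coefficients $2n,\,4n-8,\,2(n-1)$ directly.
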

The proofs of Theorems \ref{t:main1} and \ref{t:main2} are given in Section \ref{s:proofmain}.
We stress that, thanks to the results in \cite{IMV} and the proof of \cite[Theorem~4.2.5]{IV2}, the condition \eqref{eq:main2} can be rewritten only in terms of the qc structure and its Lie derivatives.

\begin{proposition} Let $\{X_{1},\ldots,X_{4n}\}$ be a local orthonormal basis for $\distr$ and $\{\alpha,\beta,\tau\}$ be any cyclic permutation of $\{1,2,3\}$. We have the following relations:
\begin{itemize}
\item[(i)] The symmetric part of the torsion endomorphism is determined entirely
by the Lie derivative of the metric
$$ T^{0}(\xi,X,Y)=\frac12 \mathcal{L}_{\xi}g(X,Y), \qquad  T^0(X,Y)=\frac12\sum_{\alpha=1}^3(\mathcal{L}_{\xi_{\alpha}}g)(I_{\alpha}X,Y).$$
\item[(ii)] The skew-symmetric part of the torsion described by $U$ satisfies
\begin{align*}
U(X,Y)=&\frac14g((\mathcal{L}_{\xi_{\beta}}I_{\alpha})X,I_{\tau}Y)-\frac14g((\mathcal{L}_{\xi_{\beta}}I_{\alpha})I_{\tau}X,Y)\\
&\qquad +\frac{1}{2n}\sum_{i=1}^{4n}g((\mathcal{L}_{\xi_{\beta}}I_{\alpha})I_{\tau}X_{i},X_{i})g(X,Y),
\end{align*}
\item[(iii)] The  normalized qc  scalar curvature is written as
$$
S=d\eta_{\alpha}(\xi_{\beta},\xi_{\tau})-d\eta_{\tau}(\xi_{\alpha},\xi_{\beta})-d\eta_{\beta}(\xi_{\tau},\xi_{\alpha})-\frac{1}{2n}\sum_{i=1}^{4n}g((\mathcal{L}_{\xi_{\beta}}I_{\alpha})I_{\tau}X_{i},X_{i}).$$
\end{itemize}
\end{proposition}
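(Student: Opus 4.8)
The plan is to derive all three identities from the defining properties (i)--(iii) of the Biquard connection $\nabla$ together with the elementary formulas expressing a Lie derivative through $\nabla$ and its torsion $T$. For arbitrary vector fields $\xi,X,Y$, since $\nabla g=0$ one has
\begin{equation*}
(\mathcal{L}_{\xi}g)(X,Y)=g(\nabla_{X}\xi,Y)+g(\nabla_{Y}\xi,X)+g(T(\xi,X),Y)+g(T(\xi,Y),X),
\end{equation*}
while for the endomorphisms $I_{\alpha}$, extended by zero on $V$,
\begin{equation*}
(\mathcal{L}_{\xi}I_{\alpha})(X)=(\nabla_{\xi}I_{\alpha})(X)-\nabla_{I_{\alpha}X}\xi+I_{\alpha}\nabla_{X}\xi-T(\xi,I_{\alpha}X)+I_{\alpha}T(\xi,X).
\end{equation*}
In each case I would specialize $\xi$ to a Reeb field and $X,Y$ to horizontal vectors, and then discard the terms that are either vertical (hence $g$-orthogonal to $\distr$ after pairing with horizontal vectors) or $\mathbb{Q}$-valued, invoking properties (i)--(iii); what survives is exactly the torsion endomorphism $T(\xi_{\alpha},\cdot)_{|\distr}$, whose symmetric and skew-symmetric parts are $T^{0}(\xi_{\alpha},\cdot)$ and $I_{\alpha}u$.

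For (i), take $\xi=\xi_{\alpha}$ and $X,Y\in\distr$: property (i) forces $\nabla_{X}\xi_{\alpha},\nabla_{Y}\xi_{\alpha}\in V$, so the first displayed formula collapses to $g(T(\xi_{\alpha},X),Y)+g(T(\xi_{\alpha},Y),X)$, which equals $2\,T^{0}(\xi_{\alpha},X,Y)$ because the skew part $I_{\alpha}u$ is $g$-skew-symmetric and drops out of the symmetrization. This is the first formula; the second follows by inserting it into the definition of the horizontal tensor $T^{0}$ recorded in the Appendix (see \eqref{Tcompnts}), namely $T^{0}(X,Y)=\sum_{\alpha}T^{0}(\xi_{\alpha},I_{\alpha}X,Y)$.

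For (ii), set $\xi=\xi_{\beta}$ in the second identity. The terms $-\nabla_{I_{\alpha}X}\xi_{\beta}+I_{\alpha}\nabla_{X}\xi_{\beta}$ are vertical and disappear after pairing with horizontal vectors; property (i) gives $\nabla_{\xi_{\beta}}I_{\alpha}\in\Gamma(\mathbb{Q})$, so $(\nabla_{\xi_{\beta}}I_{\alpha})X$ is a combination of $I_{\alpha}X,I_{\beta}X,I_{\tau}X$, and a short computation shows that in the antisymmetrized expression $g((\mathcal{L}_{\xi_{\beta}}I_{\alpha})X,I_{\tau}Y)-g((\mathcal{L}_{\xi_{\beta}}I_{\alpha})I_{\tau}X,Y)$ it reduces to a multiple of $g(X,Y)$. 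Writing $T(\xi_{\beta},\cdot)_{|\distr}=T^{0}(\xi_{\beta},\cdot)_{|\distr}+I_{\beta}u$ and using that $u$ is symmetric, traceless and commutes with $I_{1},I_{2},I_{3}$ while $I_{\alpha}I_{\beta}=I_{\tau}$, the contribution of $I_{\beta}u$ to that same combination is a multiple of $g(uX,Y)$, whereas the $T^{0}(\xi_{\beta},\cdot)$-contribution vanishes thanks to the commutation relations between the symmetric torsion and the almost complex structures (see \eqref{propt}). Comparing with the description of $U$ in terms of $u$ (see \eqref{Tcompnts}, \eqref{need1}) and removing the residual pure-trace term with the $\tfrac{1}{2n}$-correction — which is harmless since $u$ is traceless and $\dim\distr=4n$ — gives the stated formula. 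I expect the $\mathrm{Sp}(n)\mathrm{Sp}(1)$-component bookkeeping, i.e. checking that the $T^{0}$- and $\mathbb{Q}$-contributions really split into ``vanishes'' and ``pure trace'' with the precise constants $\tfrac14$ and $\tfrac1{2n}$, to be the most delicate point.

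For (iii), I would first use \eqref{reeb1}: since $\eta_{\alpha}(\xi_{\beta})=\delta_{\alpha\beta}$ is constant, $d\eta_{\alpha}(\xi_{\beta},\xi_{\tau})=-\eta_{\alpha}([\xi_{\beta},\xi_{\tau}])$, so the first three terms on the right-hand side record the total vertical part of the brackets of the Reeb fields, while the last term encodes the $\mathrm{sp}(1)$-valued connection $1$-forms evaluated on the Reeb fields. By the structure equations of Biquard \cite{Biq1} — equivalently the identities \eqref{sixtyfour} and the curvature conventions \eqref{qscs} of the Appendix, following \cite{IMV} — precisely this combination equals the normalized qc-scalar curvature $S$. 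This is the ``vertical part'' of the structure equations, and it is here that the full Biquard connection, not merely its torsion identities, must be used; matching sign and normalization conventions between the curvature definition and the bracket computation is the main obstacle for this last part.
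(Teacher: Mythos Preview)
The paper does not actually prove this Proposition: it is stated in the introduction immediately after the sentence ``thanks to the results in \cite{IMV} and the proof of \cite[Theorem~4.2.5]{IV2}, the condition \eqref{eq:main2} can be rewritten only in terms of the qc structure and its Lie derivatives'', and no argument is supplied anywhere in the text. So there is nothing to compare against beyond those external references.

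Your outline is the natural direct argument and is essentially correct. Part (i) is exactly right. For part (ii) your plan works cleanly: with $(\alpha,\beta,\tau)$ cyclic one has $I_\alpha I_\tau=-I_\beta$, and the torsion identity \eqref{need} makes the $T^{0}(\xi_\beta,\cdot)$-contribution to the combination $g((\mathcal{L}_{\xi_\beta}I_\alpha)X,I_\tau Y)-g((\mathcal{L}_{\xi_\beta}I_\alpha)I_\tau X,Y)$ cancel termwise, while the $I_\beta u$-part produces $4U(X,Y)$ via \eqref{propt}; the $\nabla_{\xi_\beta}I_\alpha\in\Gamma(\mathbb{Q})$ term indeed reduces to a multiple of $g(X,Y)$. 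Your caution about the exact numerical constants is well placed: when you carry the bookkeeping through you will find that the trace term $\sum_i g((\mathcal{L}_{\xi_\beta}I_\alpha)I_\tau X_i,X_i)$ equals $-4n$ times the same scalar, so matching the coefficient in front of the trace correction to the residual $g(X,Y)$-piece is where the precise normalization must be checked against \cite{IMV,IV2}. For part (iii), your reading of the first three terms via $d\eta_\gamma(\xi_\mu,\xi_\nu)=-\eta_\gamma([\xi_\mu,\xi_\nu])$ and the identification of the last term with the $\mathrm{sp}(1)$-connection form on the Reeb fields is exactly the content of the identities collected in \eqref{sixtyfour}, so the argument closes.
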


\begin{remark} We note that Theorems \ref{t:main1} and \ref{t:main2}  generalize  Bonnet-Myers results for the sub-class of qc manifold with integrable vertical space obtained in \cite{hladky} simplifying  considerably the Bonnet-Myers positivity condition and giving moreover explicit diameter bounds.
\end{remark}

\begin{remark}[3-Sasakian case]\label{r:qhf}
Assume that the qc manifold is 3-Sasakian.
In this case, we have from \cite[Corollary~4.13]{IMV} and \cite[Theorem~3.12]{IMV} that
$$T^0=U=0,\qquad \quad S=2.$$
Therefore, \eqref{eq:main2} is satisfied with $\kappa=1$ and we recover the universal diameter bound for 3-Sasakian manifold, established in \cite{RS-3-Sasakian}.  The diameter bound is attained for the quaternionic Hopf fibration (cf.\ Example \ref{ex:qhf}).

Notice that in \cite{RS-3-Sasakian} the authors use curvature tensors $R^{g}$ associated with the Levi-Civita connection. Using the relation between $R^{g}$ and the curvature tensor $R$  associated with the Biquard connection (see \cite[Corollary~4.13]{IMV}) we have
\begin{equation*}\label{birim}\sum_{\alpha=1}^3 R(X,I_\alpha X,I_\alpha X,X)=\sum_{\alpha=1}^3 R^{g}(X,I_\alpha X,I_\alpha X,X)+9=12,
\end{equation*}
where we apply the identity $\sum_{\alpha=1}^3 R^{g}(X,I_\alpha X,I_\alpha X,X)=3$, valid for 3-Sasakian manifolds, and proved in \cite[Prop.~3.2]{tanno71}.

\end{remark}

We also state the following interesting corollary of Theorem \ref{t:main2}, when  $n=2$.
\begin{corollary}
Let  $(M, g, \mathbb{Q})$ be a $11$-dimensional complete qc manifold. Assume $T^0=0$ and $S\ge 2\kappa>0$, then $M$ is compact with sub-Riemannian diameter not greater than $\pi/\sqrt \kappa$.
\end{corollary}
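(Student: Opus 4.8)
The plan is to obtain the corollary as an immediate specialization of Theorem~\ref{t:main2} to the borderline dimension, so the argument is bookkeeping rather than new analysis. First I would set $n = 2$, which is forced by $\dim M = 4n+3 = 11$, and record the coefficients appearing in the positivity hypothesis \eqref{eq:main2}: $2n = 4$, $4n-8 = 0$, $2(n-1) = 2$ and $4(n-1) = 4$. The key observation is that the coefficient of $U(X,X)$ vanishes exactly at $n = 2$, so the skew-symmetric part of the torsion drops out of the condition entirely, and \eqref{eq:main2} reduces to
\[
4\,T^0(X,X) + 2S \;\ge\; 4\kappa, \qquad \forall\, X \in \distr.
\]

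Next I would insert the two standing assumptions of the corollary. Since $T^0 = 0$, the left-hand side reduces to $2S$; and since $S \ge 2\kappa$ pointwise, we obtain $2S \ge 4\kappa = 4(n-1)\kappa$. Hence the hypothesis of Theorem~\ref{t:main2} holds with the given $\kappa > 0$, and its conclusion --- compactness of $M$ together with the sub-Riemannian diameter bound $\pi/\sqrt{\kappa}$ --- transfers verbatim (in fact one also inherits finiteness of the fundamental group). I do not anticipate any obstacle: the only substantive point is the numerical coincidence $4n - 8 = 0$ at $n = 2$, which is precisely what lets the curvature condition be phrased purely through $T^0$ and the normalized qc-scalar curvature $S$, with no control on the tensor $U$ needed.
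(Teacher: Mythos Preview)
Your proposal is correct and matches the paper's intent: the corollary is stated immediately after Theorem~\ref{t:main2} as its specialization to $n=2$, with no separate proof given, so your bookkeeping argument is exactly what is implicit in the text.
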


We note that qc manifolds with $T^0=0$ are characterized with the condition that the  almost contact structure on the corresponding  twistor space is normal, see \cite{DIM}.

\subsection{Relation with previous literature} \label{s:discuss} Other sub-Riemannian Bonnet-Myers type results are found in the literature, proved with different techniques and for different sub-Riemannian structures. The three dimensional contact case has been considered using second variation like formulas in \cite{Rumin} (for CR structures) and in \cite{Hughen-PhD}. In \cite{garofalob} and \cite{BKW-weitzenbock}, a version of Bonnet-Myers has been proved using heat semigroup approach for Yang-Mills type structures with transverse symmetries and Riemannian foliations with totally geodesic leaves, respectively. Using Riccati comparison techniques for 3D contact \cite{AAPL}, 3-Sasakian \cite{RS-3-Sasakian} and general contact sub-Riemannian structure \cite{ABR-contact}. See also \cite{BR-comparison} for a general approach to sub-Riemannian Bonnet-Myers theorem through curvature invariants.

A compactness result, obtained by applying Riemannian classical Bonnet-Myers theorem to a suitable Riemannian extension of the metric (cf.\ Remark \ref{r:one}) is obtained in \cite{baudoincontact} for contact manifolds and in \cite{hladky} for quaternionic contact manifolds with integrable vertical space.

\subsection{Structure of the paper} In Section \ref{s:2} we recall some results about the sub-Riemannian Jacobi equation and the curvature invariants.  In Sections \ref{s:3} and \ref{s:5} we carefully compute these invariants for quaternionic contact structures and express them with respect to standard tensors of quaternionic contact geometry. To perform these computations, we introduce a generalized Fermi frame along the geodesic. In Section \ref{s:proofmain} we use these computations to prove the Bonnet-Myers theorem. Appendix \ref{s:quatapp} resumes geometric properties of quaternionic contact structures.

\section{Curvature of sub-Riemannian qc structures} \label{s:2}
In this section we resume the basic facts on sub-Riemannian geodesic flows and curvature needed to prove our results. For a more comprehensive presentation we refer the reader to \cite{curvature,BR-comparison,BR-connection}.
\subsection{Quaternionic contact sub-Riemannian structures are fat}
A sub-Riemannian structure is said to be \emph{fat} if for any non zero section $X$ of $\distr$, $TM$ is (locally) generated by $\distr$ and $[X,\distr]$. This is equivalent to show that for every non zero horizontal vector $X\in \distr$ the following map is surjective
$$\mc{L}_{X}:\distr\to TM/\distr,\qquad \mc{L}_{X}(Y):=[X,Y] \mod \distr$$
Notice that the map $\mc{L}_{X}$ is tensorial, in the sense that for each $x\in M$ the value of $[X,Y](x) \mod \distr_{x}$ depends only on $X(x)$ and $Y(x)$. Moreover $\dim T_{x}M/\distr_{x}=3$.
The fat property follows from the following linear algebra observation.
\begin{lemma} The vectors $\{\mc{L}_{X}(I_{\alpha}X)\}_{\alpha=1,2,3}$ are linearly independent in $T_{x}M/\distr_{x}$.
\end{lemma}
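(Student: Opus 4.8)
The plan is to exploit the quaternionic relations \eqref{eq:relaz1} together with the second compatibility condition $2g(I_\alpha X, Y) = d\eta_\alpha(X,Y)$, which gives the bracket of horizontal fields a concrete expression modulo $\distr$. Indeed, for $X, Y \in \Gamma(\distr)$, since $\eta_\alpha(X) = \eta_\alpha(Y) = 0$, the Cartan formula yields $\eta_\alpha([X,Y]) = -d\eta_\alpha(X,Y) = -2g(I_\alpha X, Y)$. Hence if we identify $T_xM/\distr_x$ with $\R^3$ via the triple $(\eta_1,\eta_2,\eta_3)$, the map $\mc{L}_X$ reads
\[
\mc{L}_X(Y) = -2\bigl(g(I_1 X, Y),\, g(I_2 X, Y),\, g(I_3 X, Y)\bigr) \in \R^3.
\]
So the statement is reduced to a purely linear-algebraic claim about a fixed Euclidean space $\distr_x$ with three compatible complex structures.

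Next I would plug in $Y = I_\alpha X$. Using $g(I_\beta I_\alpha X, I_\alpha X)$ and the relations \eqref{eq:relaz1}: for $\beta = \alpha$ we get $g(I_\alpha^2 X, I_\alpha X) = -g(X, I_\alpha X) = 0$ by skew-symmetry of $I_\alpha$ (which follows from $g(I_\alpha \cdot, I_\alpha \cdot) = g(\cdot,\cdot)$ and $I_\alpha^2 = -\id$); whereas for $\beta \neq \alpha$, writing $\{\alpha,\beta,\tau\}$ as a cyclic permutation, $I_\beta I_\alpha X = \pm I_\tau X$, so $g(I_\beta I_\alpha X, I_\alpha X) = \pm g(I_\tau X, I_\alpha X)$, and again this vanishes because $I_\tau X$ and $I_\alpha X$ are orthogonal — indeed $g(I_\alpha X, I_\tau X) = g(I_\alpha X, \pm I_\alpha I_\beta X) = \pm g(X, I_\beta X) = 0$. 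Therefore
\[
\mc{L}_X(I_\alpha X) = -2 \bigl(0,\dots, g(I_\alpha I_\alpha X, I_\alpha X)\text{?}\dots\bigr)
\]
— more precisely, the only potentially nonzero entry of $\mc{L}_X(I_\alpha X)$ is the $\alpha$-th one, equal to $-2g(I_\alpha(I_\alpha X), I_\alpha X)$; but wait, that is the $\beta=\alpha$ case which we just saw vanishes. I should instead note that the $\alpha$-th component of $\mc{L}_X(I_\alpha X)$ is $-2 g(I_\alpha X, I_\alpha X) = -2 g(X,X) = -2\|X\|^2 \neq 0$ for $X \neq 0$ (this is the term $d\eta_\alpha(X, I_\alpha X) = 2g(I_\alpha X, I_\alpha X)$). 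So $\mc{L}_X(I_\alpha X)$ has its $\alpha$-th coordinate equal to $-2\|X\|^2$ and, by the computation above, its other two coordinates equal to zero. Thus $\{\mc{L}_X(I_\alpha X)\}_{\alpha=1,2,3}$ is, up to the nonzero scalar $-2\|X\|^2$, the standard basis of $\R^3$, hence linearly independent.

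The only genuine point requiring care — the ``main obstacle,'' though it is minor — is bookkeeping the signs in $I_\beta I_\alpha = \pm I_\tau$ and confirming that each off-diagonal inner product $g(I_\beta I_\alpha X, I_\alpha X)$ really vanishes regardless of the sign; this follows uniformly from the two facts $g(I_\gamma X, I_\delta X) = 0$ for $\gamma \neq \delta$ and $g(I_\gamma X, X) = 0$, both consequences of \eqref{eq:relaz1} and the metric compatibility. Once the diagonal structure of the matrix $\bigl(g(I_\beta I_\alpha X, I_\alpha X)\bigr)_{\alpha,\beta}$ is established, linear independence — and hence surjectivity of $\mc{L}_X$ onto the $3$-dimensional space $T_xM/\distr_x$, i.e.\ the fat condition — is immediate.
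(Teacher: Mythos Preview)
Your proof is correct and follows essentially the same route as the paper: both compute the matrix $\eta_\beta(\mc{L}_X(I_\alpha X)) = -2g(I_\beta X, I_\alpha X) = -2\delta_{\alpha\beta}\|X\|^2$ by first establishing the mutual orthogonality of $X, I_1X, I_2X, I_3X$. The intermediate detour through $g(I_\beta I_\alpha X, I_\alpha X)$ is a slip you rightly abandon; once you return to $g(I_\beta X, I_\alpha X)$ your argument is exactly the paper's.
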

\begin{proof} Let us start by showing that, for every non zero horizontal vector $X\in \distr$ the four vectors $X,I_{1}X,I_{2}X,I_{3}X$ are mutually orthogonal. First notice that for every $\tau=1,2,3$ and every horizontal vector $X\in \distr$ one has
\bqn\label{eq:ppp}
g(I_{\tau}X,X)=d\eta(X,X)=0.
\eqn
Moreover, if $\{\alpha \beta \tau\}$ is a cyclic permutation of $\{1 2 3\}$, thanks to \eqref{eq:relaz1} and \eqref{eq:ppp}, one has for $\alpha\neq \beta$
$$g(I_{\alpha}X,I_{\beta}X)=-g(X,I_{\alpha}I_{\beta}X)=-g(X,I_{\tau}X)=0.$$

To prove that the sub-Riemannian structure is fat it is sufficient to show that the image through $L_{X}$ of the vectors $I_{\beta}X$ for $\beta=1,2,3$ is a basis of $T_{x}M/\distr_{x}$, for every $x\in M$. This is equivalent to say that for every $X$ the matrix $\Omega_{\alpha\beta}=\eta_{\alpha}(L_{X}(I_{\beta}X))$ is invertible, which follows from 
$$\Omega_{\alpha\beta}=\eta_{\alpha}(\mc{L}_{X}(I_{\beta}X))=-d\eta_{\alpha}(X,I_{\beta}X)=-2g(I_{\alpha}X,I_{\beta}X)=-2\delta_{\alpha\beta}g(X,X).\hfill \qedhere$$
\end{proof}

\subsection{Sub-Riemannian geodesic flow} 
Sub-Rieman\-nian \emph{geodesics} are horizontal curves that are locally minimizers for the length (between curve with same endpoints). The \emph{sub-Rieman\-nian Hamiltonian} $H : T^*M \to \R$ is defined as
\begin{equation*}
H(\lambda) := \frac{1}{2}\sum_{i=1}^k \langle \lambda, X_i \rangle^2, \qquad \lambda \in T^*M,
\end{equation*}
where $X_1,\ldots,X_k$ is any local orthonormal frame for $\distr$ and $\langle \lambda, v \rangle $ denotes the action of a covector $\lambda\in T_{x}^{*}M$ on a vector $v\in T_{x}M$, based at $x\in M$. Let $\sigma$ be the canonical symplectic form on $T^*M$. The \emph{Hamiltonian vector field} $\vec{H}$ is defined by the identity $\sigma(\cdot, \vec{H}) = dH$. Then the Hamilton equations are
\begin{equation}\label{eq:Hamiltoneqs}
\dot{\lambda}(t) = \vec{H}(\lambda(t)).
\end{equation}
Solutions of~\eqref{eq:Hamiltoneqs} are called \emph{extremals}, and one can prove that their projections $\gamma(t) := \pi(\lambda(t))$ on $M$ are geodesics \cite[Chapter 4]{nostrolibro}. The Hamiltonian $H$ is constant along an extremal $\lam(t)$ and we say that the extremal is \emph{length-parametrized} if $H(\lambda(t))=1/2$.  

Since the sub-Riemannian structure defined on the qc manifold is fat, any minimizer can be recovered uniquely in this way. This statement is not true in full generality, since there can exist minimizing trajectories might not satisfy the Hamiltonian equation ~\eqref{eq:Hamiltoneqs}. These trajectories are called \emph{abnormal minimizers} and are related to the main open problems in sub-Riemannian geometry (see for instance  \cite{AAA-open} for a discussion).

\subsection{Jacobi equation revisited}
Given an extremal $\lambda(t)$ of the sub-Riemannian Hamiltonian flow and a vector field $V(t)$ along $\lambda(t)$ we define
\begin{equation*}
\dot{V}(t) := \left.\frac{d}{d\eps}\right|_{\eps=0} e^{-\eps \vec{H}}_* V(t+\eps).
\end{equation*}
the Lie derivative of $V$  in the direction of $\vec{H}$. A vector field $\JJ(t)$ along $\lam(t)$ is a \emph{sub-Riemannian Jacobi field} if
\begin{equation}\label{eq:defJFF}
\dot{\JJ} = 0.
\end{equation}
If $M$ has dimension $d$, the set of solutions of \eqref{eq:defJFF} is a $2d$-dimensional vector space. The projections $\pi_{*}\JJ(t)$ are vector fields on the manifold $M$ corresponding to one-parameter variations of $\g(t)=\pi(\lam(t))$ through geodesics. In the Riemannian case, this coincides with the classical construction of Jacobi fields.

Next, let us write \eqref{eq:defJFF} using the symplectic structure $\sigma$ of $T^{*}M$.  Observe that on $T^*M$ there is a natural notion of \emph{vertical subspace} at $\lam\in T^{*}M$, namely
\begin{equation*}
\ver_{\lambda} := \ker \pi_*|_{\lambda} = T_\lambda(T^*_{\pi(\lambda)} M) \subset T_{\lambda}(T^*M).
\end{equation*}
Then $\ver$ is a smooth (Lagrangian) sub-bundle of $T(T^{*}M)$. If one considers the frame $E_{i}=\partial_{p_i}|_{\lambda(t)}$, and $F_{j}=\partial_{x_j}|_{\lambda(t)}$ induced by coordinates $(x_1,\ldots,x_d)$ on $M$, then the vector field $\JJ(t)$ has components $(p(t),x(t)) \in \R^{2d}$, that means
\begin{equation*}
\JJ(t) = \sum_{i=1}^d p_{i}(t) E_{i}(t) + x_{i}(t) F_{i}(t).
\end{equation*}
and the elements of the frame satisfy the equation

\begin{equation}\label{eq:Jacobiframe}
\frac{d}{dt}\begin{pmatrix}
E \\
F
\end{pmatrix} =
\begin{pmatrix}
\mc{A}(t) & -\mc{B}(t) \\
\mc{R}(t) & -\mc{A}(t)^*
\end{pmatrix} \begin{pmatrix}
E\\
F
\end{pmatrix},
\end{equation}
for some smooth families of $d\times d$ matrices $\mc{A}(t),\mc{B}(t),\mc{R}(t)$, where $\mc{B}(t) = \mc{B}(t)^*$ and $\mc{R}(t)= \mc{R}(t)^*$. The structure of \eqref{eq:Jacobiframe} follows from the fact that the frame is Darboux, namely
\begin{equation*}
\sigma(E_i,E_j) = \sigma(F_i,F_j) = \sigma(E_i,F_j) -\delta_{ij} = 0, \qquad i,j=1,\ldots,d.
\end{equation*}

The idea is then to look for a suitable Darboux frame $\{E_i(t),F_i(t)\}_{i=1}^{d}$ along $\lambda(t)$ such that the equations above are in normal form.

\subsection{Curvature coefficients in quaternionic contact}\label{sec:qcnormalframe}

The normal form of the sub-Riemannian Jacobi equation \eqref{eq:Jacobiframe} has been first studied by Agrachev-Zelenko in \cite{agzel1,agzel2} and subsequently completed by Zelenko-Li in \cite{lizel}. In particular, there exist a normal form of \eqref{eq:Jacobiframe} where the matrices $\mc{A}(t)$ and $\mc{B}(t)$ are constant. Here we give an ad-hoc statement for quaternionic contact sub-Riemannian structures, following the notation and the presentation of \cite{BR-connection}.

\begin{rmk}
It is convenient to split the set of indices $1,\ldots,4n+3$ into three subsets $a,b,c$
with cardinality  $|a| = |b| = 3$ and  $|c| = 4n-3$. The index $a$ parametrizes the three-dimensional complement to the distribution, while $b$ and $c$ together  parametrize the set of indices on the distribution.

This splitting is related to the fact that the Lie derivative $\mc{L}_X : \distr_x \to T_x M/\distr_x$ in the direction of a nontrivial horizontal vector $X \in \distr_{q}$ induces a well defined, surjective linear map with $3$-dimensional image (the ``$a$'' space) and a $4n-3$-dimensional kernel (the ``$c$'' space).  The orthogonal complement of the kernel within $\distr_x$ is a $3$-dimensional space (the ``$b$'' space).

Accordingly to this decomposition, any $(4n+3) \times (4n+3)$ matrix $L$ is written in the block form
\begin{equation*}\label{eq:decomposition}
L = \begin{pmatrix}
L_{aa} & L_{ab} & L_{ac} \\
L_{ba} & L_{bb} & L_{bc} \\
L_{ca} & L_{cb} & L_{cc}
\end{pmatrix},
\end{equation*}
with similar notation for row or column vectors.
\end{rmk}
\begin{theorem}\label{p:can}
Let $\lambda(t)$ be a sub-Riemannian extremal of a qc sub-Riemannian structure. There exists a smooth moving frame along $\lambda(t)$
\begin{equation*}
E(t)  = (E_a(t),E_b(t),E_c(t))^*, \qquad F(t)  = (F_a(t),F_b(t),F_c(t))^*,
\end{equation*}
such that the following holds true for any $t$:
\begin{itemize}
\item[(i)] $\spn\{E_a(t),E_b(t),E_c(t)\} = \mathcal{V}_{\lambda(t)}$.
\item[(ii)] It is a Darboux basis, namely
\begin{equation*}
\sigma(E_\mu,E_\nu) = \sigma(F_\mu,F_\nu)= \sigma(E_\mu,F_\nu) - \delta_{\mu\nu} = 0, \qquad \mu,\nu =a,b,c.
\end{equation*}
\item[(iii)] The frame satisfies the \emph{structural equations}
\begin{align*}
\dot{E}_a & = E_b,      & \dot{E}_b & = -F_b,   & \dot{E}_c & = -F_c,  \\
\dot{F}_a & =  \sum_{\mu=a,b,c}\mc{R}_{a \mu}(t)E_{\mu}, &  \dot{F}_b & =  \sum_{\mu=a,b,c}\mc{R}_{b\mu}(t)E_{\mu} - F_a & \dot{F}_c, & =  \sum_{\mu=a,b,c}\mc{R}_{c\mu}(t)E_{\mu}.
\end{align*}

where the curvature matrix $\mc{R}(t) = \mc{R}(t)^*$ is
\begin{equation*}
\mc{R}(t) = \begin{pmatrix}
\mc{R}_{aa}(t) & \mc{R}_{ab}(t) & \mc{R}_{ac}(t) \\
\mc{R}_{ba}(t) & \mc{R}_{bb}(t) & \mc{R}_{bc}(t) \\
\mc{R}_{ca}(t) & \mc{R}_{cb}(t) & \mc{R}_{cc}(t)
\end{pmatrix},
\end{equation*}
and satisfies the additional condition $\mc{R}_{ab}(t) = - \mc{R}_{ab}(t)^*$.

\end{itemize}

\end{theorem}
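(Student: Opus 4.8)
The plan is to construct the desired frame by the Zelenko--Li normalization procedure, adapted to the fat, corank-three situation at hand. First I would recall that the ``growth vector'' of a fat corank-three sub-Riemannian structure is constant along any extremal, so the associated \emph{Young diagram} of Agrachev--Zelenko--Li has the same shape at every point: here it consists of $4n-3$ boxes in a single row and three columns of height two (one column of height two for each of the three vertical directions that appear at step two of the flag $\distr\subset \distr^2 = TM$). This is exactly what motivates the $a,b,c$ splitting in the Remark: the three ``$a$'' indices sit at the top of the height-two columns, the three ``$b$'' indices at their bottom, and the $4n-3$ ``$c$'' indices are the singleton row. Once the combinatorial shape is fixed, the abstract theorem of Li--Zelenko \cite{lizel} (see also the presentation in \cite{BR-connection,curvature}) produces a canonical moving frame along $\lambda(t)$, unique up to a constant orthogonal gauge preserving the splitting, whose structural equations have the stated form. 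So at the level of \emph{existence}, the statement is an instance of the general normal-form theorem, and the work is to verify that the general normal form specializes to precisely the equations written in (iii).

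Concretely I would proceed in the following steps. Step one: start from the tautological Darboux frame $E_i = \partial_{p_i}$, $F_i = \partial_{x_i}$ satisfying \eqref{eq:Jacobiframe}, and recall that the flag of subspaces $\ver_{\lambda(t)} \subset (\text{stuff generated by applying } \mathrm{ad}\,\vec H)$ stabilizes; the jumps in the dimensions of these subspaces reproduce the Young diagram. Step two: choose $E_c(t)$ spanning the part of $\ver$ that ``survives'' one application of the flow, i.e. whose $\pi_*$-images after one derivative are the horizontal directions in the kernel of $\mc{L}_X$; normalize so that $\dot E_c = -F_c$ with $F_c \in \ver^\perp$ appropriately. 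Step three: choose $E_a(t)$ spanning the complementary part of $\ver$, the one sitting over the vertical directions; the first derivative $\dot E_a$ lands in a new direction which we \emph{name} $E_b$, and a second derivative produces $-F_b$; this is the height-two column and forces $\dot E_a = E_b$, $\dot E_b = -F_b$. Step four: complete to a Darboux frame by defining $F_a$ from the symplectic orthogonality relations; differentiating the Darboux relations $\sigma(E_\mu,E_\nu) = \sigma(F_\mu,F_\nu)=0$, $\sigma(E_\mu,F_\nu)=\delta_{\mu\nu}$ and using Hamilton's equations yields the equations for $\dot F_a, \dot F_b, \dot F_c$ with a symmetric remainder matrix $\mc{R}(t)$, the $-F_a$ correction term in the $\dot F_b$ line being exactly the transpose of the $E_b$ term in the $\dot E_a$ line. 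Step five: impose the residual normalization (the ``normal moving frame'' conditions of Li--Zelenko that kill the remaining gauge freedom); these are what force the extra antisymmetry $\mc{R}_{ab}(t) = -\mc{R}_{ab}(t)^*$ and the vanishing of the would-be $\mc{A}$, $\mc{B}$ blocks beyond what is displayed.

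The main obstacle, and the step deserving the most care, is step five together with the precise bookkeeping of which blocks of the general Zelenko--Li normal form are forced to vanish here. In the general theory the normal frame is governed by the Young diagram, and one must check that for this particular diagram (three columns of height two plus a row of length $4n-3$) the normalizing conditions leave no freedom except a constant block-orthogonal gauge, and that the only ``off-diagonal'' structural term permitted is the pairing between the $a$ and $b$ levels of the height-two columns — hence $\dot E_a = E_b$ but nothing analogous for $E_c$, and the skew condition on $\mc{R}_{ab}$. I would verify this by writing the general normal-form statement from \cite{lizel} (as reformulated in \cite{BR-connection}) and matching symbols; the fatness established in the Lemma above guarantees the flag has no ``defect'' and that the image of $\mc{L}_X$ is genuinely three-dimensional at every point of every extremal, so the diagram really is constant and the construction goes through uniformly in $t$. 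Everything else — the Darboux relations, the derivation of the $\dot F_\mu$ equations, the symmetry $\mc{R} = \mc{R}^*$ — is a routine differentiation of the defining identities and carries no surprises.
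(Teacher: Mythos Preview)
Your proposal is correct and matches the paper's own treatment: the paper does not give a self-contained proof of this theorem but presents it as the specialization to the qc (fat, corank-three) case of the general Zelenko--Li normal form, citing \cite{lizel} and the reformulation in \cite{BR-connection}; your plan to invoke that general theorem and verify the Young diagram shape is exactly this. One small terminological slip: the diagram here has three \emph{rows of length two} (the $a$--$b$ pairs) together with $4n-3$ rows of length one (the $c$ indices), not ``three columns of height two''; with that correction your five-step outline and the identification of the residual normalization yielding $\mc{R}_{ab}=-\mc{R}_{ab}^*$ are on target.
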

\begin{remark}
If we fix another frame $\{\wt{E}(t),\wt{F}(t)\}$ satisfying (i)-(iii) for some matrix $\wt{\mc R}(t)$, then there exists a constant $n \times n$ orthogonal matrix $O$ that preserves the structural equations and such that
\begin{equation*}
\wt{E}(t) = O E(t), \qquad \wt{F}(t) = O F(t), \qquad \wt{\mc R}(t) = O \mc R(t) O^*.
\end{equation*}
For more details about the uniqueness of this frame we refer the reader to \cite{BR-connection}.
\end{remark}
\subsection{Ricci curvature and Bonnet-Myers theorem}\label{s:invariantspaces}
We can state now a corollary of the general results obtained in \cite{BR-comparison} (an analogue statement of the one mentioned here is \cite[Thm.\ 5]{RS-3-Sasakian}) that gives a Bonnet-Myers type theorem that we will use to prove our results.
\begin{theorem}\label{t:BRmain} Let $(M,\distr,g)$ be a complete qc sub-Rieman\-nian manifold. Assume that there exists $\kappa>0$ such that for any length-parametrized extremal $\lambda(t)$ one has
\begin{equation*}
\trace (\mc{R}_{cc}(t)) \geq 4(n-1)\kappa,
\end{equation*}
Then $M$ is compact and its sub-Rieman\-nian diameter is bounded by $\pi/\sqrt{\kappa}$.
Moreover $M$ has finite fundamental group.
\end{theorem}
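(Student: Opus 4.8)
The plan is to reduce Theorem~\ref{t:BRmain} to an estimate on the first conjugate time along a length-parametrized extremal, and then to invoke the comparison theory of \cite{BR-comparison}, which in this geometry is governed precisely by the block $\mc R_{cc}(t)$ of the canonical curvature of Theorem~\ref{p:can}. First I would set up the reduction: by the sub-Riemannian Hopf--Rinow theorem (see \cite{nostrolibro}), completeness of $(M,d_{SR})$ implies that normal extremals are defined for all times, that closed $d_{SR}$-balls are compact, and that any two points are joined by a minimizer. Since the qc sub-Riemannian structure is fat, it carries no nontrivial abnormal extremal — this follows from the linear-algebra lemma of Section~\ref{s:2}, an abnormal covector being forced to annihilate $\distr$ in a way incompatible with the surjectivity of $\mc L_X$ — so every minimizer is the projection $\g=\pi\circ\lam$ of a length-parametrized normal extremal $\lam(t)$, and it ceases to be minimizing strictly after its first conjugate time $t_{\mathrm{conj}}(\lam)$. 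Hence it suffices to show that, under the hypothesis $\trace\mc R_{cc}(t)\ge 4(n-1)\kappa$, every length-parametrized extremal has a conjugate point in $(0,\pi/\sqrt\kappa\,]$.

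To estimate $t_{\mathrm{conj}}(\lam)$ I would work in the canonical frame $\{E_\mu(t),F_\mu(t)\}$ of Theorem~\ref{p:can}. Writing a Jacobi field as $\JJ(t)=\sum_\mu\bigl(p_\mu(t)E_\mu(t)+x_\mu(t)F_\mu(t)\bigr)$ and using $\pi_*E_\mu=0$ while $\pi_*F_\mu$ is a frame of $TM$, the conjugate points of $\lam$ are exactly the times $t>0$ at which some nontrivial solution of the Jacobi equation with $x(0)=0$ has $x(t)=0$. The trivial Jacobi field $\dot\lam$ projects to $\dot\g$ and never vanishes; since $\mc L_{\dot\g}(\dot\g)=0$, the direction $\R\dot\g$ lies in the kernel of $\mc L_{\dot\g}$, i.e.\ in the ``$c$'' space, so quotienting it out leaves a reduced, $4(n-1)$-dimensional problem living in the ``$c$'' directions. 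Eliminating the momenta from the structural equations, the ``$c$''-component obeys
\begin{equation*}
\ddot x_c + \mc R_{cc}(t)\,x_c = -\mc R_{ca}(t)\,x_a - \mc R_{cb}(t)\,x_b ,
\end{equation*}
coupled through the right-hand side to the ``$a$'' and ``$b$'' components, which — the structure being step two — satisfy a genuinely higher-order degenerate subsystem.

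For the reduced problem one then forms the matrix Riccati operator $W(t)$ of the associated Jacobi curve and takes its trace: feeding in $\trace\mc R_{cc}(t)\ge 4(n-1)\kappa$ together with the Cauchy--Schwarz bound $\trace(W^2)\ge (\trace W)^2/(4(n-1))$ over the $4(n-1)$-dimensional reduced space yields a scalar Riccati differential inequality whose solution blows up no later than $\pi/\sqrt\kappa$, which forces a conjugate point there. This is the step I expect to be the main obstacle for a self-contained treatment: one must know that the coupling to the ``$a$'' and ``$b$'' blocks does not spoil the estimate — equivalently, that the reduced Jacobi curve is still a monotone curve in a Lagrange Grassmannian whose non-transversality times are controlled by $\mc R_{cc}$ alone, and that the reduction is done with the correct effective dimension so that the constant $\pi/\sqrt\kappa$ (rather than $\pi\sqrt{(4n-3)/(4n-4)}/\sqrt\kappa$) comes out sharp. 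This is exactly what the comparison results of \cite{BR-comparison} provide (cf.\ \cite[Thm.~5]{RS-3-Sasakian}), and is the point where I would simply cite that theory rather than redo it.

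It remains to draw the two conclusions. The bound $\diam(M,d_{SR})\le\pi/\sqrt\kappa$ together with the sub-Riemannian Hopf--Rinow theorem (closed balls are compact) shows that $M$, being a $d_{SR}$-ball of finite radius, is compact. For the fundamental group I would pull the qc structure back to the universal cover $\wt M$: the covering map is a sub-Riemannian local isometry, so it transports $\distr$, $g$ and $\mathbb{Q}$ — hence the Biquard connection, its torsion and its curvature — and the curvature hypothesis holds on $\wt M$ with the same constant $\kappa$, while $(\wt M,\wt d_{SR})$ is again complete. By the first part $\wt M$ is compact, so the covering $\wt M\to M$ has finite fibres, i.e.\ $\pi_1(M)$ is finite.
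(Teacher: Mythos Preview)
Your proposal is correct and matches the paper's approach: the paper does not give its own proof of Theorem~\ref{t:BRmain} but states it as a corollary of the general comparison theory in \cite{BR-comparison} (citing also \cite[Thm.~5]{RS-3-Sasakian}), which is precisely what you invoke. Your sketch of the Hopf--Rinow reduction, the absence of abnormals via fatness, the Riccati blow-up on the $4(n-1)$-dimensional reduced ``$c$'' space, and the universal-cover argument for $\pi_1(M)$ simply fills in the standard details behind that citation.
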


In the following sections we will compute the quantity $\trace(\mc{R}_{cc}(t))$ for every sub-Riemannian extremal on a qc manifold and deduce the main theorems stated in the Introduction.

\section{Structural equations for the coordinate frame} \label{s:4}

\label{s:3}

In what follows latin indices $i,j,k,\dots$ belong to $\{1,\dots,4n\}$ and
Greek ones $\alpha,\beta,\tau,\ldots$ belong to $\{1,2,3\}$, corresponding to quaternions (following the same quaternionic indices notation of Appendix~\ref{s:quatapp}).

 We start by choosing a convenient local frame on $M$, associated with a given
trajectory. Here $\{X_{1},\ldots,X_{4n}\}$ will denote a local orthonormal frame for the metric $g$ on  $\distr$.

\subsection{Fermi frame}
Given a geodesic $\gamma(t) = \pi(\lambda(t))$,  we define a convenient local frame on $M$ which is an application of a standard result in
differential geometry, called Fermi normal frame along a smooth curve.

\begin{lemma}\label{fermi_frame}
Given a geodesic $\gamma(t)$, there exists a $\mathbb Q$-orthonormal frame, i.e., a horizontal
frame $X_i$, $i \in \{1,\dots,4n\}$,
and vertical frame $\xi_{\alpha}$, $\alpha=1,2,3$
in a neighborhood of $\gamma(0)$, such that for all $\alpha, \beta
\in\{1,2,3\}$ and $i,j\in\{1,\dots,4n\}$,
   
   \begin{itemize}
        \item[(i)] the frame is orthonormal for the Riemannian metric $g+\sum_{\beta}\eta_{\beta}^2$,
        \item[(ii)] $\nabla_{X_i} X_j|_{\gamma(t)} = \nabla_{\xi_\alpha} X_j|_{\gamma(t)} =\nabla_{X_i} \xi_\beta|_{\gamma(t)}=\nabla_{\xi_{\alpha}} \xi_\beta|_{\gamma(t)}
        =0$.
    \end{itemize}
     In particular, for all $\alpha, \beta,\tau 
\in\{1,2,3\}$ and $i,j\in\{1,\dots,4n\}$
$$((\nabla_{X_i}I_{\alpha})X_j)|_{\gamma(t)}=((\nabla_{X_i}I_{\alpha})\xi_{\beta})|_{\gamma(t)}=((\nabla_{\xi_{\beta}}I_{\alpha})X_j)|_{\gamma(t)}
=((\nabla_{\xi_{\beta}}I_{\alpha})\xi_{\tau})|_{\gamma(t)}=0.$$
\end{lemma}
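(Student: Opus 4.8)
The plan is to realize the claimed frame as a \emph{Fermi (parallel) frame} for the Biquard connection $\nabla$ along $\gamma$; the fact that $\gamma$ is a sub-Riemannian geodesic is irrelevant here, only that it is a smooth curve, and by restricting to a short arc I may assume it is embedded on $(-\eps,\eps)$. Write $\wt g := g+\sum_\beta\eta_\beta^2$. Properties (i)--(iii) of $\nabla$ say exactly that $\nabla$ is metric for $\wt g$ (since $\nabla g=0$ on $\distr$, and $\nabla\varphi=0$ forces $\nabla$ to preserve the metric the $\eta_\beta^2$ put on $V$), that it preserves the $\wt g$-orthogonal splitting $\distr\oplus V$, and that it preserves the rank-three bundle $\mathbb{Q}$ together with the relations \eqref{eq:relaz1} and the $g$-compatibility (hence also the induced action of the $I_\alpha$ on $V$ coming from $\varphi$). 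Consequently $\nabla$-parallel transport along any curve is a $\wt g$-isometry that carries a $\mathbb{Q}$-orthonormal, quaternionic-adapted datum to one of the same type. So I would build the frame purely by parallel transport: first along $\gamma$, then along the $\wt g$-geodesics issuing from $\gamma$ normally (the classical Fermi construction).

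Concretely, in three steps. (1) At $p_0=\gamma(0)$ pick a $\wt g$-orthonormal basis of $\distr_{p_0}$ arranged in quaternionic quadruples $\{X_{4m+1}(0),\ I_1X_{4m+1}(0),\ I_2X_{4m+1}(0),\ I_3X_{4m+1}(0)\}$, $m=0,\dots,n-1$ (orthonormality of each quadruple being the Lemma just proved), let $\xi_1(0),\xi_2(0),\xi_3(0)$ be the Reeb vectors at $p_0$ (automatically $\wt g$-orthonormal since $\eta_\alpha(\xi_\beta)=\delta_{\alpha\beta}$), and keep the given generators $I_1,I_2,I_3$ of $\mathbb{Q}$. (2) Parallel transport $\{X_i\},\{\xi_\alpha\},\{I_\alpha\}$ along $\gamma$. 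Metricity and preservation of the splitting keep the $X_i$ a $\wt g$-orthonormal frame of $\distr$ and the $\xi_\alpha$ one of $V$; preservation of $\mathbb{Q}$ and of \eqref{eq:relaz1} keeps the $I_\alpha$ a compatible hypercomplex triple; and the frame stays quaternionic-adapted, because for cyclic $\{\alpha,\beta,\tau\}$ both $I_\alpha X_{4m+1}$ and $X_{4m+1+\alpha}$ are $\nabla_{\dot\gamma}$-parallel (for the first, $\nabla_{\dot\gamma}(I_\alpha X_{4m+1})=(\nabla_{\dot\gamma}I_\alpha)X_{4m+1}+I_\alpha\nabla_{\dot\gamma}X_{4m+1}=0$) and they agree at $t=0$; likewise on $V$. (3) Fix a tubular neighbourhood of the arc for $\wt g$, write every nearby point uniquely as $\exp^{\wt g}_{\gamma(t)}(v)$ with $v\perp_{\wt g}\dot\gamma(t)$ small, and extend $\{X_i\},\{\xi_\alpha\},\{I_\alpha\}$ by $\nabla$-parallel transport along the rays $s\mapsto\exp^{\wt g}_{\gamma(t)}(sv)$.

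It then remains to check (i) and (ii). Part (i): $\nabla$-parallel transport preserves $\wt g$ and the initial datum is $\wt g$-orthonormal, so the frame is $\wt g$-orthonormal on the whole neighbourhood, and it is $\mathbb{Q}$-adapted there too because the $I_\alpha$ were transported along with it and $I_\alpha X_{4m+1}=X_{4m+1+\alpha}$ propagates from $\gamma$ along the rays by the Step (2) argument. Part (ii): for $X\in T_{\gamma(t)}M$ write $X=c\,\dot\gamma(t)+v$ with $v\perp_{\wt g}\dot\gamma(t)$; each frame field is $\nabla$-parallel along $\gamma$, so its $\nabla_{\dot\gamma(t)}$-derivative is zero, and $\nabla$-parallel along the ray with initial velocity $v$, so its $\nabla_v$-derivative at $\gamma(t)$ is zero; by linearity $\nabla_X X_j|_{\gamma(t)}=\nabla_X\xi_\beta|_{\gamma(t)}=\nabla_X I_\alpha|_{\gamma(t)}=0$, which is (ii). Since in particular $(\nabla_X I_\alpha)|_{\gamma(t)}=0$ as an endomorphism of $\distr$ for every $X\in T_{\gamma(t)}M$, the four displayed identities $(\nabla_{X_i}I_\alpha)X_j=(\nabla_{X_i}I_\alpha)\xi_\beta=(\nabla_{\xi_\beta}I_\alpha)X_j=(\nabla_{\xi_\beta}I_\alpha)\xi_\tau=0$ along $\gamma$ follow at once (alternatively, without transporting the $I_\alpha$, via the Leibniz rule $(\nabla_{X_i}I_\alpha)Y=\nabla_{X_i}(I_\alpha Y)-I_\alpha\nabla_{X_i}Y$ together with the fact that, by Step (2), $I_\alpha$ maps each frame vector to $\pm$ another one with coefficients constant along $\gamma$).

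I expect no genuine obstacle: the argument uses only the existence theorem for linear ODEs (parallel transport) and the tubular-neighbourhood theorem, exactly as in the classical Fermi-frame construction. The two points to handle with a word of care are that $\gamma$ be embedded on the relevant arc — harmless, since only a neighbourhood of $\gamma(0)$ is asserted — and that the quaternionic adaptedness survive the transport, which is precisely where properties (i)--(iii) of the Biquard connection (metricity for $\wt g$, preservation of $\distr\oplus V$ and of $\mathbb{Q}$ with the relations \eqref{eq:relaz1}) are used.
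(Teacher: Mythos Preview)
Your proof is correct and follows the same Fermi-frame construction as the paper: both build the frame by killing the $\mathfrak{sp}(n)\oplus\mathfrak{sp}(1)$-valued connection form of the Biquard connection along $\gamma$ and then extending to a tubular neighbourhood, using that $\nabla$ preserves $\wt g$, the splitting $\distr\oplus V$, and the bundle $\mathbb{Q}$. The paper phrases this as a gauge transformation $o^a_b\in \mathrm{Sp}(n)$, $o^\beta_\alpha\in \mathrm{Sp}(1)$ of a given frame and cites an external reference for the existence of the solution, whereas you spell out the equivalent parallel-transport construction explicitly; the content is the same.
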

The proof of this Lemma is postponed to Appendix \ref{s:fermia}.

\subsection{Commutator relations and Poisson brackets}
Fix  $\{X_1,X_2,\dots$ $,X_{4n}\}$ a horizontal
frame 
and  $\xi_{\alpha}$ for $\alpha=1,2,3$ vertical frame and introduce the \textit{momentum functions} $u_i,v_\alpha: T^*M \to \R$ defined by
\begin{align*}
u_i(\lambda) & =  \langle \lambda, X_i \rangle , \qquad i = 1,\dots,4n, \\
v_\alpha(\lambda) & = \langle \lambda, \xi_\alpha\rangle , \qquad \alpha = 1,2,3.
\end{align*}
The momentum functions define coordinates $(u,v)$ on each fiber of $T^*M$. In turn, they define local vector fields $\partial_{v_\alpha}$ and $\partial_{u_i}$ on $T^*M$ (satisfying $\pi_* \partial_{v_\alpha} = \pi_*\partial_{u_i} = 0$). Moreover, they define also the Hamiltonian vector fields $\vec{u}_i$ and $\vec{v}_\alpha$. The \emph{Hamiltonian frame} associated with $\{\xi_\alpha,X_i\}$ is the local frame on $T^*M$ around $\lambda(0)$ given by $\{\partial_{u_i},\partial_{v_\alpha},\vec{u}_i,\vec{v}_\alpha\}$.

The sub-Riemannian Hamiltonian and the corresponding Hamiltonian vector field are
\begin{equation*}
H = \frac{1}{2} \sum_{i=1}^{4n}u_i u_i, \qquad \vec{H} = \sum_{i=1}^{4n}u_i \vec{u}_i.
\end{equation*}
 We will use the short notation $\alpha\beta=\tau$, where  $ \alpha,\beta=1,2,3$, for the quaternionic multiplication. The following $3\times 3$  skew-symmetric matrix contains the vertical part of the covector:
$$V=V_{\alpha\beta}=v_{\alpha\beta}=v_{\alpha}v_{\beta}$$
with the convention $v_{\alpha^2}=-v_1=0$ which is the standard identification $\R^3\backsimeq \mathfrak{so}(3)$.

\begin{remark}
For functions $f,g \in C^\infty(T^*M)$, the symbol $\{f,g\}$ denotes
their Poisson bracket. The symbol $\dot f$ always denotes the Lie
derivative in the direction of $\vec{H}$. 
We make systematic use of symplectic calculus (see for instance \cite{Agrachevbook} for reference).
\end{remark}

In what follows, we fix a geodesic $\g(t)$ with corresponding lift $\lambda(t)$ and a Fermi frame associated with it and given by Lemma \ref{fermi_frame}. Repeated indices are implicitly summed over.
\begin{lemma}[Commutators] \label{l:commutatori}
We have the following identities
\begin{itemize}
        \item[(a')] $g([X,Y],\xi_{\alpha})=-d\eta_{\alpha}(X,Y)=-2g(I_{\alpha}X,Y)$
        \item[(b')] $g([X,Y],Z)=g(\nabla_{X}Y,Z)-g(\nabla_{Y}X,Z)$
        \item[(c')] $g([\xi_{\alpha},X],\xi_{\beta})=d\eta_{\alpha}(\xi_{\beta},X)=-d\eta_{\beta}(\xi_{\alpha},X)
       =-g(\nabla_X\xi_{\alpha},\xi_{\beta}).$
        \item[(d')] $g([\xi_{\alpha},X],Y)=  -T(\xi_{\alpha},X,Y)+g(\nabla_{\xi_{\alpha}}X,Y)$
        \item[(e')]  $g([\xi_{\alpha},\xi_{\beta}],\xi_{\gamma})=-d\eta_{\gamma}(\xi_{\alpha},\xi_{\beta})=Sg(\xi_{\alpha\beta},\xi_{\gamma}) +g(\nabla_{\xi_{\alpha}}\xi_{\beta},\xi_{\gamma})-g(\nabla_{\xi_{\beta}}\xi_{\alpha},\xi_{\gamma})$
        \item[(f')] $g([\xi_{\alpha},\xi_{\beta}],X)= \rho_{\alpha\beta}(I_{\beta}X,\xi_{\beta})= \rho_{\alpha\beta}(I_{\alpha}X,\xi_{\alpha})$
    \end{itemize}
    Along the curve we have the following simplifications
    \begin{itemize}
        \item[(a)] $g([X,Y],\xi_{\alpha})=-d\eta_{\alpha}(X,Y)=-2g(I_{\alpha}X,Y)$
        \item[(b)] $g([X,Y],Z)=0$
        \item[(c)] $g([\xi_{\alpha},X],\xi_{\beta})=0$
        \item[(d)] $g([\xi_{\alpha},X],Y)=  -T(\xi_{\alpha},X,Y)$
        \item[(e)]  $g([\xi_{\alpha},\xi_{\beta}],\xi_{\gamma})=-d\eta_{\gamma}(\xi_{\alpha},\xi_{\beta})=Sg(\xi_{\alpha\beta},\xi_{\gamma})$
        \item[(f)] $g([\xi_{\alpha},\xi_{\beta}],X)= \rho_{\alpha\beta}(I_{\beta}X,\xi_{\beta})$
    \end{itemize}
\end{lemma}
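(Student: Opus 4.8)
The plan is to prove Lemma \ref{l:commutatori} in two stages: first establish the general identities (a')--(f'), which hold in a neighbourhood of $\gamma(0)$ for an arbitrary $\mathbb{Q}$-orthonormal frame, and then specialize them along the curve to obtain (a)--(f) by invoking the vanishing of all Fermi-frame Christoffel symbols from Lemma \ref{fermi_frame}(ii). For the general identities, the basic tool is the torsion formula $T(V,W) = \nabla_V W - \nabla_W V - [V,W]$ for the Biquard connection, which rearranges to $[V,W] = \nabla_V W - \nabla_W V - T(V,W)$; taking the $g$-inner product (extended to the Riemannian metric $g + \sum_\beta \eta_\beta^2$ when vertical components are involved) against the appropriate frame vector yields each line. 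The only inputs beyond this are the structural facts recalled in Section~\ref{s:2} and the Appendix: the torsion of the Biquard connection restricted to $\distr$ is $T(X,Y) = -[X,Y]_{|V} = 2\sum_\alpha \omega_\alpha(X,Y)\xi_\alpha$, the compatibility $2g(I_\alpha X, Y) = d\eta_\alpha(X,Y)$, the property $\nabla g = 0$, and the identification $\varphi$ of $V$ with $\mathrm{sp}(1)$ together with the relations \eqref{reeb1} governing $d\eta_\alpha$ on the Reeb fields.

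First I would handle (a') and (b'): since $T(X,Y)$ for horizontal $X,Y$ lands entirely in $V$, pairing $[X,Y] = \nabla_X Y - \nabla_Y X - T(X,Y)$ against $\xi_\alpha$ gives $g([X,Y],\xi_\alpha) = -g(T(X,Y),\xi_\alpha) = -2\omega_\alpha(X,Y) = -d\eta_\alpha(X,Y) = -2g(I_\alpha X, Y)$, using that $\nabla$ preserves the splitting $H \oplus V$ so the connection terms have no vertical part; pairing against a horizontal $Z$ kills the torsion term and leaves (b'). For (c') and (d') I would pair $[\xi_\alpha, X] = \nabla_{\xi_\alpha} X - \nabla_X \xi_\alpha - T(\xi_\alpha, X)$ against $\xi_\beta$ and against horizontal $Y$ respectively; the $T(\xi_\alpha,\cdot)$ endomorphism preserves $\distr$, so against $\xi_\beta$ only the connection terms survive, and $\nabla_{\xi_\alpha}X$ is horizontal so $g(\nabla_{\xi_\alpha} X,\xi_\beta)=0$, yielding $g([\xi_\alpha,X],\xi_\beta) = -g(\nabla_X\xi_\alpha,\xi_\beta)$; the identification of this with $d\eta_\alpha(\xi_\beta,X)$ and its skew-symmetry in $\alpha,\beta$ follows from \eqref{reeb1} and the fact that $\nabla$ acts on $V \cong \mathrm{sp}(1)$ through $\varphi$. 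Identity (d') is the same computation pairing against horizontal $Y$, where now the torsion term contributes $-T(\xi_\alpha,X,Y)$. For (e') and (f') I would pair $[\xi_\alpha,\xi_\beta]$ against $\xi_\gamma$ and against horizontal $X$; here the key is that the connection on $V$ is flat-compatible with the $\mathrm{sp}(1)$-structure, so the bracket of Reeb fields decomposes with a vertical part governed by the normalized scalar curvature $S$ (this is exactly \eqref{sixtyfour}-type information from the Appendix identifying $-d\eta_\gamma(\xi_\alpha,\xi_\beta)$ with $S g(\xi_{\alpha\beta},\xi_\gamma)$ plus connection terms) and a horizontal part measured by the tensor $\rho_{\alpha\beta}$; I would read off the precise coefficients from the curvature relations of the Biquard connection recalled in the Appendix.

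The second stage is immediate: along $\gamma(t)$ the Fermi frame satisfies $\nabla_{X_i}X_j = \nabla_{\xi_\alpha}X_j = \nabla_{X_i}\xi_\beta = \nabla_{\xi_\alpha}\xi_\beta = 0$ by Lemma \ref{fermi_frame}(ii), so every term of the form $g(\nabla_\bullet \bullet, \bullet)$ appearing on the right-hand sides of (a')--(f') vanishes at points of the curve. Dropping those terms turns (b') into (b), (c') into (c), and simplifies (d'), (e'), (f') into (d), (e), (f); identities (a') and (a) are already purely tensorial (expressed via $d\eta_\alpha$ and $I_\alpha$) and so are unchanged. I expect the main obstacle to be bookkeeping rather than conceptual: correctly tracking which torsion and curvature components of the Biquard connection enter (e') and (f') and matching sign conventions with the Appendix definitions of $S$, $T$, and $\rho_{\alpha\beta}$ — in particular verifying that the horizontal component of $[\xi_\alpha,\xi_\beta]$ is precisely $\rho_{\alpha\beta}(I_\beta X,\xi_\beta)$ and that this equals $\rho_{\alpha\beta}(I_\alpha X,\xi_\alpha)$, which uses the $\mathrm{Sp}(n)\mathrm{Sp}(1)$-equivariance of $\rho$. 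Everything else is a direct consequence of the torsion identity and $\nabla g = 0$.
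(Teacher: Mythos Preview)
Your proposal is correct and follows exactly the approach the paper intends: the paper's own proof is a one-sentence pointer to ``a direct computation by combining the definition and the properties of the torsion of the Biquard connection \eqref{sixtyfour} and using the choice of Fermi frame,'' and you have faithfully unpacked precisely those ingredients---the torsion identity $[V,W]=\nabla_V W-\nabla_W V-T(V,W)$, the splitting-preserving property of $\nabla$, the relations \eqref{reeb1} and \eqref{sixtyfour}, and the Fermi-frame vanishing from Lemma~\ref{fermi_frame}. The only point where you hedge, namely identifying the horizontal part of $[\xi_\alpha,\xi_\beta]$ in (f'), is handled directly by the last line of \eqref{sixtyfour}, which already records $g(T(\xi_\alpha,\xi_\beta),X)=-\rho_\tau(I_\alpha X,\xi_\alpha)=-\rho_\tau(I_\beta X,\xi_\beta)=-g([\xi_\alpha,\xi_\beta],X)$ with $\tau=\alpha\beta$, so no additional equivariance argument is needed.
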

\begin{proof}
 It is obtained by a direct computation by combining the definition and the properties of the torsion of the Biquard connection \eqref{sixtyfour}
and using the choice of Fermi frame.
\end{proof}
As a consequence of the previous identities we compute the following Poisson brackets of momentum functions.
\begin{lemma}[Poisson brackets] The momentum functions $u_i,v_\alpha$ have the following properties:
    \begin{itemize}
        \item[(a')] $\{v_\alpha,u_{i}\}= d\eta_{\alpha}(\xi_{\tau},X_{i}) v_\tau + \left(  -T(\xi_{\alpha},X_{i},X_{k})+g(\nabla_{\xi_{\alpha}}X_{i},X_{k}) \right)u_k$,
        \item[(b')] $\{v_\alpha,v_\beta\} = -d\eta_{\tau}(\xi_{\alpha},\xi_{\beta})v_{\tau}+ \rho_{\alpha\beta}(I_{\beta}X_{k},\xi_{\beta})u_{k}$,
        \item[(c')] $\{u_i,u_j\} =- 2 g(I_\tau X_i,X_j)v_\tau +g( X_k,[X_i,X_j]) u_k $.
    \end{itemize}
    Moreover, when evaluated along the extremal $\lambda(t)$, one has
    \begin{itemize}
        \item[(a)] $\{v_\alpha,u_{i}\}=  -T(\xi_{\alpha},X_{i},\dot \gamma)$,
        \item[(b)] $\{v_\alpha,v_\beta\} = - d\eta_{\tau}(\xi_{\alpha},\xi_{\beta})v_{\tau}
        + \rho_{\alpha\beta}(I_{\beta}\dot \gamma,\xi_{\beta})=Sg(\xi_{\alpha\beta},\xi_{\tau})v_{\tau}
        + \rho_{\alpha\beta}(I_{\beta}\dot \gamma,\xi_{\beta}) $,
        \item[(c)] $\{u_i,u_j\} =- 2 g(I_\tau X_i,X_j)v_\tau  $.
        \item[(d)] $\partial_{u_k} \{u_i,u_j\}=0$,
        \item[(e)] $\partial_{v_\alpha} \{u_i,u_j\}= -2g(I_\alpha X_i,X_j)$,
    \end{itemize}
\end{lemma}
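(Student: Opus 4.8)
The plan is to establish the ``unprimed'' (along-the-curve) formulas as immediate consequences of the ``primed'' (general) ones by feeding in the Fermi frame normalization and the structure of the extremal $\lambda(t)$. First I would recall the basic dictionary between commutators of vector fields and Poisson brackets of the associated momentum functions: for vector fields $Y, Z$ on $M$ one has $\{\langle\lambda,Y\rangle,\langle\lambda,Z\rangle\} = \langle\lambda,[Y,Z]\rangle$ (sign conventions aside), so that $\{v_\alpha,u_i\} = \langle\lambda,[\xi_\alpha,X_i]\rangle$, $\{v_\alpha,v_\beta\} = \langle\lambda,[\xi_\alpha,\xi_\beta]\rangle$, and $\{u_i,u_j\} = \langle\lambda,[X_i,X_j]\rangle$. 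Expanding each commutator in the frame $\{X_k,\xi_\tau\}$ and pairing with $\lambda$ — which means replacing $g([\,\cdot\,,\,\cdot\,],X_k)$ by the coefficient multiplying $u_k$ and $g([\,\cdot\,,\,\cdot\,],\xi_\tau)$ by the coefficient multiplying $v_\tau$ — together with the identities (a')--(f') of Lemma \ref{l:commutatori} gives exactly the primed formulas (a')--(c') for the Poisson brackets. This part is a routine bookkeeping computation that uses only the definition of the momentum functions and the already-proven commutator lemma.

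Next I would specialize to a point of the geodesic. Along $\gamma(t)$ the simplified commutator identities (a)--(f) hold: the terms involving $\nabla$ of frame elements vanish by property (ii) of the Fermi frame (Lemma \ref{fermi_frame}), and $g([X,Y],Z)$ vanishes entirely. Substituting (b) into the expression for $\{u_i,u_j\}$ kills the $u_k$-term and leaves (c), namely $\{u_i,u_j\} = -2g(I_\tau X_i,X_j)v_\tau$. Substituting (e) and (f) into the expression for $\{v_\alpha,v_\beta\}$ yields (b), where I would additionally use the identity $-d\eta_\tau(\xi_\alpha,\xi_\beta) = S\, g(\xi_{\alpha\beta},\xi_\tau)$ from (e') / (e) to rewrite the vertical coefficient in terms of the normalized scalar curvature $S$, and note that along the curve $\sum_k \rho_{\alpha\beta}(I_\beta X_k,\xi_\beta)u_k = \rho_{\alpha\beta}(I_\beta\dot\gamma,\xi_\beta)$ since $\dot\gamma = \sum_k u_k X_k$ along a length-parametrized extremal with the Fermi frame. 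For $\{v_\alpha,u_i\}$, along the curve the identity (d) gives $g([\xi_\alpha,X_i],X_k) = -T(\xi_\alpha,X_i,X_k)$ and (c) gives $g([\xi_\alpha,X_i],\xi_\tau)=0$, so pairing with $\lambda$ and summing against $\dot\gamma = \sum_k u_k X_k$ produces (a), $\{v_\alpha,u_i\} = -T(\xi_\alpha,X_i,\dot\gamma)$.

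Finally I would derive the two fiber-derivative identities (d) and (e). Since along the curve $\{u_i,u_j\} = -2\sum_\tau g(I_\tau X_i,X_j)v_\tau$ is an affine function of the fiber coordinates $(u,v)$ whose coefficients $g(I_\tau X_i,X_j)$ are constants (they are evaluated at the base point, hence do not depend on the covector), differentiating in the vertical directions is immediate: $\partial_{u_k}\{u_i,u_j\} = 0$ because no $u_k$ appears, and $\partial_{v_\alpha}\{u_i,u_j\} = -2g(I_\alpha X_i,X_j)$. The only subtlety worth flagging is that identities (d) and (e) should be read as identities of functions on (a neighborhood in) $T^*M$, not merely at the single point $\lambda(t)$ — they hold because the formula $\{u_i,u_j\} = -2g(I_\tau X_i,X_j)v_\tau$ is valid in a neighborhood of $\lambda(t)$ as a consequence of the Fermi frame condition $g([X_i,X_j],X_k)|_{\gamma(t)}=0$ holding along the whole curve, so the $u$-dependent term in (c') vanishes in a neighborhood and differentiation under the fiber coordinates is legitimate. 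I do not expect any genuine obstacle here; the main point requiring care is keeping the sign conventions for the Poisson bracket and the interior product consistent with those fixed earlier in the paper, and correctly handling the contraction $\sum_k u_k X_k = \dot\gamma$ when passing from the index-$i$/index-$k$ primed formulas to the along-the-curve ones.
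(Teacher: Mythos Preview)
Your approach is essentially the same as the paper's: both use the identity $\{h_X,h_Y\}=h_{[X,Y]}$, expand the commutators in the frame via Lemma~\ref{l:commutatori}, and then specialize along the extremal using the Fermi frame. One wording issue: for (d) and (e) you say the simplified formula for $\{u_i,u_j\}$ holds ``in a neighborhood of $\lambda(t)$'', but the Fermi condition $g([X_i,X_j],X_k)|_{\gamma(t)}=0$ is only along the curve, not on an open set of $M$; the correct justification is that the coefficients in (c') are pullbacks from $M$, hence constant along each fiber, so on the whole fiber $T^*_{\gamma(t)}M$ the $u_k$-term vanishes and the vertical derivatives $\partial_{u_k},\partial_{v_\alpha}$ can be read off directly---which is what you ultimately use.
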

\begin{proof} These formulas comes as a direct consequence of Lemma~\ref{l:commutatori}, thanks to the following observation: let $X,Y$ be  two smooth vector fields on $M$ and $h_{X}(\lam)=\langle \lambda, X(x)\rangle$ and $h_{Y}(\lam)=\langle \lam,Y(x)\rangle$ the associated Hamiltonians that are linear on fibers (here $x=\pi(\lam)$). Then we have the identity $\{h_{X},h_{Y}\}(\lam)=\langle \lambda, [X,Y](x)\rangle=:h_{[X,Y]}$.
 
Let us now prove, as an example, formula (c'). We have, by definition
$$[X_{i},X_{j}]= g( X_k,[X_i,X_j])X_{k} +g( \xi_\alpha,[X_i,X_j]) \xi_{\alpha}.$$
Using then the above observation one has
$$\{u_i,u_j\}  = v_\alpha g( \xi_\alpha,[X_i,X_j])  + u_k  g( X_k,[X_i,X_j]).$$
and using then Lemma~\ref{l:commutatori}, one gets
\begin{align*}
   \{u_i,u_j\} & = v_\alpha g( \xi_\alpha,[X_i,X_j])  + u_k  g( X_k,[X_i,X_j])   \\
       & = -v_\alpha d\eta_\alpha(X_i,X_j) + u_k  g( X_k,[X_i,X_j])  = -2v_\alpha g(I_\alpha X_i, X_j) + u_k  g( X_k,[X_i,X_j]) .
\end{align*}
which proves (c'). Observe that the last term vanishes when evaluated along the extremal, thanks to the properties of Fermi frame. This proves (c). All other formulas follow analogously.
\end{proof}

\begin{lemma}[Some arrows]
We have the following expressions along the extremal
\begin{itemize}
\item[(a)] $\overrightarrow{\{u_i,u_j\}} = -2g(I_\alpha X_i,X_j)\vec{v}_\alpha -u_k X_\ell g([X_i,X_j],X_{k}) \partial_{u_\ell} -u_k \xi_\beta g( [X_i,X_j],X_{k}) \partial_{v_\beta}$,
\item[(b)] $\overrightarrow{\{v_{\alpha},u_{i}\}}= -T(\xi_{\alpha},X_{i},X_{k})\vec{u}_k - K_{\alpha i}^{\ell} \partial_{u_{\ell}} -J_{\alpha i}^{\beta} \partial_{v_{\beta}}$,
\end{itemize}
where we set
\begin{align}\label{kj1} K_{\alpha i}^{\ell}:=v_{\tau}X_{\ell}d\eta_{\alpha}(\xi_{\tau},X_{i})  -
u_{k}X_{\ell}T(\xi_{\alpha},X_{i},X_{k})
+u_kX_{\ell}g(\nabla_{\xi_{\alpha}}X_{i},X_{k}),\\\nn \label{kj2}
J_{\alpha
i}^{\beta}:=v_{\tau}\xi_{\beta}d\eta_{\alpha}(\xi_{\tau},X_{i})  -
u_{k}\xi_{\beta}T(\xi_{\alpha},X_{i},X_{k})
+u_k\xi_{\beta}g(\nabla_{\xi_{\alpha}}X_{i},X_{k}).
\end{align}
\end{lemma}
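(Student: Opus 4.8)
The plan is to compute the two Hamiltonian vector fields $\overrightarrow{\{u_i,u_j\}}$ and $\overrightarrow{\{v_\alpha,u_i\}}$ directly from the Poisson-bracket expressions (a') and (c') of the previous lemma, using the general principle that the Hamiltonian vector field of a function $f$ on $T^*M$ is determined by how $f$ Poisson-commutes with the coordinate functions, and that in the Hamiltonian frame $\{\partial_{u_i},\partial_{v_\alpha},\vec u_i,\vec v_\alpha\}$ a vector field is recovered once its action on $u_i,v_\alpha$ (equivalently its $\vec u_k,\vec v_\beta$ components) and on fiber-linear coordinates (its $\partial_{u_\ell},\partial_{v_\beta}$ components) are known. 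So first I would write $\{u_i,u_j\} = -2 g(I_\alpha X_i,X_j) v_\alpha + g([X_i,X_j],X_k) u_k$ from (c'), and then apply $\overrightarrow{\phantom{X}}$ to each summand, treating the coefficients $g(I_\alpha X_i,X_j)$ and $g([X_i,X_j],X_k)$ as functions pulled back from $M$ (hence acted on only through $\vec H$-differentiation along the base, which is why the arrow produces the $X_\ell(\cdot)\partial_{u_\ell}$ and $\xi_\beta(\cdot)\partial_{v_\beta}$ terms) and noting that $\overrightarrow{v_\alpha} = \vec v_\alpha$, $\overrightarrow{u_k}=\vec u_k$.

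The key computational identity I would invoke is that for a function $\phi\in C^\infty(M)$ pulled back to $T^*M$ and a fiber-linear function $h_Y$, the vector field $\overrightarrow{\phi\, h_Y}$ decomposes as $\phi\,\vec h_Y + h_Y\,\overrightarrow{\phi}$, where $\overrightarrow{\phi}$ is vertical — more precisely, writing $\phi$ as a function of the base point, its Hamiltonian vector field is $\sum_j (\partial_{x_j}\phi)\,\partial_{p_j}$, which in the present frame reads $\sum_\ell (X_\ell\phi)\,\partial_{u_\ell} + \sum_\beta (\xi_\beta\phi)\,\partial_{v_\beta}$. Applying this to $\phi = -g([X_i,X_j],X_k)$ (the $v_\alpha$-coefficient being handled by its own product rule, but its $\overrightarrow{\phantom{X}}$-contribution to the $\partial$-directions is what would survive; here one uses that along $\gamma$ many of these coefficient functions are tuned by the Fermi frame but their derivatives transverse to the chosen frame need not vanish) yields exactly the stated expression (a), with the $-2g(I_\alpha X_i,X_j)\vec v_\alpha$ term coming from $\overrightarrow{v_\alpha}=\vec v_\alpha$ paired with its (constant-along-$\gamma$, by the Fermi property) coefficient. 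For (b) I would do the same with formula (a') for $\{v_\alpha,u_i\}$: the term $-T(\xi_\alpha,X_i,X_k)u_k$ produces $-T(\xi_\alpha,X_i,X_k)\vec u_k$ plus derivative terms, and collecting all the $\partial_{u_\ell}$ and $\partial_{v_\beta}$ contributions from differentiating the three coefficient functions $d\eta_\alpha(\xi_\tau,X_i)$, $T(\xi_\alpha,X_i,X_k)$, $g(\nabla_{\xi_\alpha}X_i,X_k)$ along $X_\ell$ and $\xi_\beta$ gives precisely $K_{\alpha i}^\ell$ and $J_{\alpha i}^\beta$ as defined in \eqref{kj1}.

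The main obstacle is bookkeeping rather than conceptual: one must be careful that formula (a') of the Poisson-bracket lemma already incorporates the Fermi-frame simplifications only \emph{at the point} $\gamma(t)$, so when taking $\overrightarrow{\phantom{X}}$ — which differentiates in the $\vec H$ direction and thus probes a neighborhood of the curve — one cannot pre-substitute the vanishing relations from Lemma \ref{fermi_frame}; the covariant derivatives $\nabla_\bullet(\cdot)$ vanish on the curve but $X_\ell$ and $\xi_\beta$ derivatives of the structure functions do not, which is exactly why the $K$ and $J$ coefficients retain all three terms. Thus the careful point is to apply the arrow operation to the \emph{general} (primed) formulas and only then restrict to $\lambda(t)$, keeping track of which terms are genuinely present. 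Once this is done the identities (a) and (b) follow by grouping terms according to the frame $\{\partial_{u_\ell},\partial_{v_\beta},\vec u_k,\vec v_\alpha\}$, and the definitions \eqref{kj1} are simply the names assigned to the resulting $\partial_{u_\ell}$- and $\partial_{v_\beta}$-coefficients.
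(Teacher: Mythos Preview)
Your approach is essentially identical to the paper's: apply the Leibniz rule $\overrightarrow{fg}=g\vec f+f\vec g$ to the primed Poisson-bracket formulas, express $\vec\phi$ for a base function $\phi$ in the Hamiltonian frame, and then restrict to $\lambda(t)$ using the Fermi simplifications. One caution: in the paper's convention $\vec f=-X_\ell(f)\partial_{u_\ell}-\xi_\beta(f)\partial_{v_\beta}+\cdots$, so your formula $\overrightarrow{\phi}=\sum_\ell (X_\ell\phi)\partial_{u_\ell}+\sum_\beta(\xi_\beta\phi)\partial_{v_\beta}$ has the wrong sign, and it is the derivatives of $g(I_\alpha X_i,X_j)$ (not of $g([X_i,X_j],X_k)$) that vanish along $\gamma$ by the Fermi property---fix these and your computation reproduces (a) and (b) exactly.
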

\begin{proof} In this proof we make use of the following two facts. Let $f$ be a smooth function on $T^{*}M$ and let $\{\partial_{u_i},\partial_{v_\alpha},\vec{u}_i,\vec{v}_\alpha\}$ be the Hamiltonian frame, one has
$$\vec f=-X_{i}(f)\partial_{u_{i}}-\xi_{\alpha}(f)\partial_{v_{\alpha}}+\partial_{u_{i}}(f) \vec{u}_{i}+\partial_{v_{\alpha}}(f) \vec{v}_{\alpha}.$$
Moreover, if $f,g$ are smooth functions on $T^{*}M$, we have $\overrightarrow{fg}=g\vec f  +f\vec g$. To prove (a), one then computes
\begin{align*}
\overrightarrow{\{u_i,u_j\}} & = -2 g(I_\alpha X_i,X_j) \vec{v}_\alpha +\vec{u_k}\cancel{ g( X_k,[X_i,X_j]) } - 2v_\alpha \overrightarrow{g(I_\alpha X_i,X_j)} +u_k\overrightarrow{  g( X_k,[X_i,X_j]) }\\
& = -2g(\phi_\alpha X_i,X_j)\vec{v_\alpha} +2v_\alpha \cancel{X_\ell g(I_\alpha X_i,X_j)}\partial_{u_\ell} +2v_\alpha \cancel{\xi_\tau g(I_\alpha X_i,X_j)}\partial_{v_\tau} \\
& \quad -u_k X_\ell g( X_k,[X_i,X_j]) \partial_{u_\ell} -u_k \xi_\tau g( X_k,[X_i,X_j]) \partial_{v_\tau},
\end{align*}
where the barred terms vanishes by Fermi frame. Similarly for (b) one gets
\begin{align*}
\overrightarrow{\{v_\alpha,u_i\}} & =\overrightarrow{d\eta_{\alpha}(\xi_{\tau},X_{i}) v_\tau +
\left(  -T(\xi_{\alpha},X_{i},X_{k})+g(\nabla_{\xi_{\alpha}}X_{i},X_{k}) \right)u_k} \\
&= \cancel{d\eta_{\alpha}(\xi_{\tau},X_{i})} \vec{v}_\tau + \left(  -T(\xi_{\alpha},X_{i},X_{k})+
\cancel{g(\nabla_{\xi_{\alpha}}X_{i},X_{k})} \right)\vec{u}_k \\
&\qquad + \overrightarrow{d\eta_{\alpha}(\xi_{\tau},X_{i})} v_\tau +
\left( \overrightarrow { -T(\xi_{\alpha},X_{i},X_{k})+g(\nabla_{\xi_{\alpha}}X_{i},X_{k})} \right)u_k\\
&=  -T(\xi_{\alpha},X_{i},X_{k})\vec{u}_k- K^{\ell}_{\alpha i}
\partial_{u_{\ell}} -J^{\beta}_{\alpha i} \partial_{v_{\beta}}
\end{align*}
again the barred terms vanishes by Fermi frame.
The expression of the coefficients $K^{\ell}_{\alpha i}$ and $J^{\beta}_{\alpha i}$ are obtained from  direct computations.
\end{proof}
\begin{lemma}\label{l:secder} Let  $v_\alpha(t) =  \langle \lambda(t),\xi_\alpha|_{\gamma(t)}\rangle $, for $\alpha =1,2,3$. Then, along the geodesic, we have
    \begin{equation}\label{geodesic}
    \nabla_{\dot\gamma}\dot\gamma = -2 v_\alpha I_\alpha \dot\gamma.
    \end{equation}
\end{lemma}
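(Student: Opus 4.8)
The statement asserts that the geodesic equation for a sub-Riemannian extremal $\gamma(t)$ on a qc manifold takes the form $\nabla_{\dot\gamma}\dot\gamma = -2v_\alpha I_\alpha\dot\gamma$ (summation over $\alpha$), where $\nabla$ is the Biquard connection and $v_\alpha(t) = \langle\lambda(t),\xi_\alpha\rangle$ are the vertical momentum functions along the extremal lift $\lambda(t)$. The plan is to compute $\dot\gamma$ in terms of the momentum functions, differentiate covariantly, and then use the structure of the Hamiltonian flow together with the properties of the Fermi frame to identify the right-hand side.

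First I would write $\dot\gamma(t) = \pi_*\vec H(\lambda(t)) = \sum_i u_i(t) X_i|_{\gamma(t)}$, which follows from $\vec H = \sum_i u_i\vec u_i$ and the fact that $\pi_*\vec u_i = X_i$; here $u_i(t) = \langle\lambda(t),X_i\rangle$. Covariant differentiation along $\gamma$ then gives
\begin{equation*}
\nabla_{\dot\gamma}\dot\gamma = \sum_i \dot u_i(t)\, X_i|_{\gamma(t)} + \sum_i u_i(t)\,\nabla_{\dot\gamma}X_i|_{\gamma(t)}.
\end{equation*}
The second sum vanishes identically along the curve: by the defining property (ii) of the Fermi frame in Lemma \ref{fermi_frame}, $\nabla_{X_j}X_i|_{\gamma(t)} = \nabla_{\xi_\alpha}X_i|_{\gamma(t)} = 0$, and since $\dot\gamma$ is a combination of the $X_j$'s (it is horizontal), $\nabla_{\dot\gamma}X_i|_{\gamma(t)} = 0$. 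So it remains to compute $\dot u_i = \{H,u_i\}$ along the extremal. Using $H = \tfrac12\sum_j u_j u_j$ one gets $\dot u_i = \sum_j u_j\{u_j,u_i\}$, and by part (c) of the Poisson brackets lemma, evaluated along $\lambda(t)$, $\{u_j,u_i\} = -2g(I_\tau X_j,X_i)v_\tau$. Substituting,
\begin{equation*}
\dot u_i = \sum_{j,\tau} u_j\big(-2g(I_\tau X_j,X_i)v_\tau\big) = -2\sum_\tau v_\tau\, g\Big(I_\tau\sum_j u_j X_j,\ X_i\Big) = -2\sum_\tau v_\tau\, g(I_\tau\dot\gamma, X_i),
\end{equation*}
using $g(I_\tau X_j,X_i) = -g(X_j,I_\tau X_i)$ to rearrange, or directly expanding $\dot\gamma = \sum_j u_j X_j$. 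Feeding this back in, $\nabla_{\dot\gamma}\dot\gamma = \sum_i\dot u_i X_i = -2\sum_\tau v_\tau\sum_i g(I_\tau\dot\gamma,X_i)X_i = -2\sum_\tau v_\tau I_\tau\dot\gamma$, since $I_\tau\dot\gamma$ is horizontal and $\{X_i\}$ is an orthonormal basis of $\distr$; this is exactly \eqref{geodesic}.

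I do not anticipate a genuine obstacle here — the result is essentially a bookkeeping consequence of three facts already established: the expression $\dot\gamma = \sum_i u_i X_i$, the vanishing of the Christoffel-type terms $\nabla_{\dot\gamma}X_i$ along $\gamma$ in the Fermi frame, and formula (c) for $\{u_i,u_j\}$ along the extremal. The only point requiring a little care is the sign and index conventions when passing from $g(I_\tau X_j, X_i)$ back to the vector $I_\tau\dot\gamma$, and the harmless relabeling of the summation index $\tau$ as $\alpha$ to match the stated form. One should also note that $\dot\gamma$ being horizontal (hence a combination of the $X_i$ only, with no $\xi_\alpha$ component) is what makes the second covariant-derivative term disappear — this uses that extremals project to horizontal curves, established in the discussion of the geodesic flow.
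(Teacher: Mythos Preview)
Your proof is correct and follows essentially the same argument as the paper: write $\dot\gamma = u_i X_i$, use the Fermi frame to kill $\nabla_{\dot\gamma}X_i$ along $\gamma$, compute $\dot u_i = \{H,u_i\} = u_j\{u_j,u_i\}$ via the Poisson bracket formula (c), and then reassemble $-2v_\alpha I_\alpha\dot\gamma$ from the orthonormal expansion. There is nothing to add.
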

\begin{proof}
    Indeed $\gamma(t) = u_i(t) X_i|_{\gamma(t)}$, with $u_i(t) = \langle \lambda(t),X_i|_{\gamma(t)}\rangle $. Then, suppressing the explicit dependence on $t$ one has
    \begin{align*}
    \nabla_{\dot\gamma}\dot\gamma & = \dot{u}_i X_i + u_i u_k \cancel{\nabla_{X_k} X_i}  = \{H,u_i\} X_i  \\
    & = u_k \{u_k ,u_i\} X_i  = -2u_k v_\alpha  g( I_\alpha X_k,X_i)  X_i =-2 v_\alpha  g( I_\alpha \dot\gamma,X_i)  X_i  =-2v_\alpha I_\alpha \dot\gamma,
    \end{align*}
where the barred term vanishes along the trajectory thanks to the properties of Fermi frame.
\end{proof}
\subsection{Fundamental computations}
The frame $\{\partial_{u},\partial_{v},\vec u, \vec v\}$ is a basis of the tangent space to $T^{*}M$. We compute the differential equations of this frame along an extremal.

\begin{lemma}\label{l:fund} Along the extremal $\lambda(t)$, we have
    \begin{align*}
    \dot{\partial}_v & = 2A\partial_u, \\
    \dot{\partial}_u & = - \vec{u} + G \partial_{v},\\
    \dot{\vec{u}} & = 2C\vec{u}-2A^*\vec{v}+B\partial_u+D\partial_v, \\
    \dot{\vec{v}} & = L\vec{u}+M\partial_u+2N\partial_v,
    \end{align*}
    where we defined the following matrices, computed along $\lambda(t)$:
    \begin{align*}
    A_{\beta i} & := g( I_\beta\dot{\gamma},X_i) ,   \quad  \hspace{80mm} \text{$3 \times 4n$ matrix}, \\
    G_{i\alpha} & :=   -T(\xi_{\alpha},\dot \gamma,X_{i}) 
\hspace{80mm} \text{ $4n\times 3$  matrix}, \\
    B_{i\ell} &:=-u_{j}u_{k}X_{\ell}g([X_{j},X_{i}],X_{k})=R(\dot\gamma, X_{\ell},X_{i},\dot \gamma)\\
    C_{ij} & := v_\alpha g( I_\alpha X_i,X_j) , \quad \hspace{43mm}  \text{ $4n\times 4n$ skew-symmetric matrix}, \\
    D_{i\beta} &  :=  -u_{j}u_{k}\xi_{\beta}g([X_{j},X_{i}],X_{k})=R(\dot\gamma, \xi_{\beta},X_{i},\dot \gamma)\\
    L_{\beta j} &:=-\frac12 \left( T^{0}(I_{\beta}X_{j},\dot \gamma)+T^{0}(I_{\beta}\dot \gamma,X_{j})\right)\\
    M_{\alpha\ell} &:=K_{\alpha j}^{\ell}u_{j}, \quad
    M_{\alpha\ell}=-2v_{\tau}\rho_{\zeta}(X_\ell,\dot\gamma)+\frac12(\nabla_{X_{\ell}}T^0)(I_{\alpha}\dot\gamma,\dot\gamma), \\
    N_{\alpha\beta} &:=J_{\alpha j}^{\beta}u_{j}, \quad 2 N_{\alpha\beta}= -2v_{\tau}\rho_{\zeta}(\xi_{\beta},\dot\gamma)+
\frac12(\nabla_{\xi_{\beta}}T^0)(I_{\alpha}\dot\gamma,\dot\gamma).
    \end{align*}
in the last two formulas $\{\alpha \tau \zeta\}$ is a cyclic permutation of $\{1 2 3\}$,  or $\zeta=\alpha\tau$ (as product of quaternions).
\end{lemma}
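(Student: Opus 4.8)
The plan is to differentiate each of the four families of vector fields $\partial_{v_\alpha}$, $\partial_{u_i}$, $\vec u_i$, $\vec v_\alpha$ along $\vec H$ and express the result back in the Hamiltonian frame, using the symplectic identity $\vec f = -X_i(f)\partial_{u_i}-\xi_\alpha(f)\partial_{v_\alpha}+\partial_{u_i}(f)\vec u_i+\partial_{v_\alpha}(f)\vec v_\alpha$ together with the Leibniz rule $\overrightarrow{fg}=g\vec f+f\vec g$, and then evaluating everything along $\lambda(t)$ where the Fermi-frame simplifications of Lemma \ref{l:commutatori} and the Poisson-bracket lemma apply. Since $\vec H=u_i\vec u_i$, the Lie derivative $\dot{(\cdot)}$ in the direction of $\vec H$ of a coordinate vector field or Hamiltonian vector field is computed via the bracket of vector fields on $T^*M$; for the $\partial$'s one uses $[\vec H,\partial_{u_i}]$ and $[\vec H,\partial_{v_\alpha}]$, and for the $\vec u_i,\vec v_\alpha$ one uses $[\vec H,\vec u_i]=\overrightarrow{\{H,u_i\}}$ and $[\vec H,\vec v_\alpha]=\overrightarrow{\{H,v_\alpha\}}$ (the Hamiltonian vector field of the Poisson bracket).

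Concretely I would proceed in four steps. First, $\dot\partial_v$: from $[\vec H,\partial_{v_\alpha}]=-\partial_{v_\alpha}(\vec H)=-\partial_{v_\alpha}(u_i)\vec u_i=0$ plus the correction coming from writing it in the frame — more carefully, $\dot\partial_{v_\alpha}=-[\partial_{v_\alpha},\vec H]$ and since $\vec H=u_i\vec u_i$ with $u_i$ independent of $v$, one gets a term $u_i[\partial_{v_\alpha},\vec u_i]=u_i\partial_{v_\alpha}(\{u_i,\cdot\})$-type contribution; using $\partial_{v_\alpha}\{u_i,u_j\}=-2g(I_\alpha X_i,X_j)$ from the Poisson-bracket lemma this collapses to $2A_{\alpha i}\partial_{u_i}$, giving $\dot\partial_v=2A\partial_u$. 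Second, $\dot\partial_u$: similarly $\dot\partial_{u_i}=-[\partial_{u_i},\vec H]=-\vec u_i-u_j[\partial_{u_i},\vec u_j]$; the leading $-\vec u_i$ is the Riemannian-like term and the remainder, governed by $\partial_{u_i}\{u_j,u_k\}=0$ and the $\partial_u$ of the $v$-dependent parts, produces $G_{i\alpha}\partial_{v_\alpha}$ with $G_{i\alpha}=-T(\xi_\alpha,\dot\gamma,X_i)$ after using Lemma \ref{l:commutatori}(d) and Lemma \ref{l:secder}. Third, $\dot{\vec u}$: here $\dot{\vec u}_i=\overrightarrow{\{H,u_i\}}=\overrightarrow{u_j\{u_j,u_i\}}=u_j\overrightarrow{\{u_j,u_i\}}+\{u_j,u_i\}\vec u_j$; plug in the ``Some arrows'' lemma for $\overrightarrow{\{u_i,u_j\}}$ and the formula $\{u_j,u_i\}=-2g(I_\alpha X_j,X_i)v_\alpha$ to read off the coefficients: the $\vec u$ part gives $2C$ (with $C_{ij}=v_\alpha g(I_\alpha X_i,X_j)$), the $\vec v$ part gives $-2A^*$, and the $\partial_u,\partial_v$ parts give $B$ and $D$, where $B_{i\ell}=R(\dot\gamma,X_\ell,X_i,\dot\gamma)$ and $D_{i\beta}=R(\dot\gamma,\xi_\beta,X_i,\dot\gamma)$ by the definition of the curvature of the Biquard connection in terms of the bracket term $u_ju_kX_\ell g([X_j,X_i],X_k)$. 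Fourth, $\dot{\vec v}$: $\dot{\vec v}_\alpha=\overrightarrow{\{H,v_\alpha\}}=\overrightarrow{u_j\{u_j,v_\alpha\}}=u_j\overrightarrow{\{u_j,v_\alpha\}}+\{u_j,v_\alpha\}\vec u_j$; use the ``Some arrows'' lemma for $\overrightarrow{\{v_\alpha,u_i\}}$ and $\{v_\alpha,u_i\}=-T(\xi_\alpha,X_i,\dot\gamma)$ to obtain the $\vec u$ coefficient $L$ (the symmetrization $-\tfrac12(T^0(I_\beta X_j,\dot\gamma)+T^0(I_\beta\dot\gamma,X_j))$ coming from contracting the torsion against $u_j$ and using the symmetric/skew decomposition of $T_\xi$ and that $u$-part contributions vanish after tracing as in the Appendix identities), and the $\partial_u,\partial_v$ coefficients $M,N$ from $M_{\alpha\ell}=K^\ell_{\alpha j}u_j$, $N_{\alpha\beta}=J^\beta_{\alpha j}u_j$, then rewrite $K,J$ using \eqref{kj1}–\eqref{kj2}, the Ricci-type identity $\rho$ and the covariant derivative of $T^0$ to get the stated closed forms.

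The routine but lengthy part is the bookkeeping in the third and fourth steps: correctly tracking which terms survive the Fermi-frame evaluation (all $X_\ell g(\cdot)$ and $\nabla X$ terms with purely horizontal covariant derivatives vanish on $\gamma$, but their derivatives $X_\ell g([X_j,X_i],X_k)$ do \emph{not}, and precisely reassemble into the Biquard curvature tensor), and then recognizing the surviving combinations as $R(\dot\gamma,\cdot,\cdot,\dot\gamma)$, $\rho_\zeta$, and $\nabla T^0$. The main obstacle I anticipate is the identification $-u_ju_kX_\ell g([X_j,X_i],X_k)=R(\dot\gamma,X_\ell,X_i,\dot\gamma)$ and its vertical analogue for $D$, i.e. correctly relating the second derivative of the structure functions along the extremal to the curvature of the Biquard connection (as opposed to the Levi-Civita connection) while accounting for the torsion terms $T(\xi_\alpha,\cdot,\cdot)$ — this requires carefully invoking the Appendix relations \eqref{sixtyfour} and the Fermi-frame property $\nabla_{X_i}X_j|_\gamma=0$ so that $[X_j,X_i]|_\gamma$ and $\nabla_{X_j}X_i-\nabla_{X_i}X_j$ agree only up to the vertical torsion term, with the curvature emerging from the derivative of this relation. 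Once these identifications are in place, the matrices $A,B,C,D,G,L,M,N$ are read off directly from the coefficients in the Hamiltonian frame, completing the proof.
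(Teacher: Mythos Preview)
Your plan is correct and matches the paper's own proof essentially step for step: the paper also computes each of $\dot\partial_{v_\beta}$, $\dot\partial_{u_i}$, $\dot{\vec u}_i$, $\dot{\vec v}_\beta$ as Lie brackets $[u_j\vec u_j,\,\cdot\,]$, expands via the Leibniz rule, uses the Poisson-bracket lemma and the ``Some arrows'' lemma to read off the coefficients, and then (in a separate Lemma~\ref{l7}) performs exactly the curvature identification you flag as the main obstacle, namely $-u_ju_kX_\ell g([X_j,X_i],X_k)=R(\dot\gamma,X_\ell,X_i,\dot\gamma)$ and the closed forms for $M$ and $N$. Your anticipation that the $U$-part cancels in $L$ (because $U$ commutes with $I_\beta$ and is symmetric) is also how the stated form of $L$ is obtained.
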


\begin{proof}
    By a direct computation we get (simplifications are due to Fermi frame properties)
\begin{align*}
    \dot{\partial}_{v_\beta} &=
    [u_j\vec{u}_j,\partial_{v_\beta}] =  -\partial_{v_\beta}(u_j)\vec{u}_j + u_j[\vec{u}_j,\partial_{v_\beta}] = u_j[\vec{u}_j,\partial_{v_\beta}](u_i)\partial_{u_i} + u_j[\vec{u}_j,\partial_{v_\beta}](v_\alpha)\partial_{v_\alpha}\\
    &= -u_j\partial_{v_\beta}\{u_j,u_i\}\partial_{u_i} -u_j \cancel{\partial_{v_\beta}\{u_j,v_\alpha\}} \partial_{v_\alpha}= u_j g( 2I_\beta X_j,X_i) \partial_{u_i} = 2 g( I_\beta\dot\gamma,X_i) \partial_{u_i}.\\
    \dot{\partial}_{u_i} &=
    [u_j\vec{u}_j,\partial_{u_i}] = -\partial_{u_i}(u_j)\vec{u}_j + u_j[\vec{u}_j,\partial_{u_i}] = -\vec{u}_i -u_j \cancel{\partial_{u_i}\{u_j,u_\ell\}}\partial_{u_\ell} - u_{j} \partial_{u_i}\{u_j,v_\alpha\}\partial_{v_\alpha}\\
    &=-\vec{u}_i  -u_{j} T(\xi_{\alpha},X_j,X_k)\partial_{u_i}(u_k)\partial_{v_\alpha}=-\vec{u}_i  -T(\xi_{\alpha},\dot{\gamma},X_i)\partial_{v_\alpha}\\
    \dot{\vec{u}}_i &=
    [u_j\vec{u}_j,\vec{u}_i] = -\vec{u}_i(u_j)\vec{u}_j + u_j[\vec{u}_j,\vec{u}_i] = -\{u_i,u_j\} \vec{u}_j + u_j\overrightarrow{\{u_j,u_i\}}\\
 &= 2v_\alpha g(I_\alpha X_i,X_j )  \vec{u}_j +2g(I_\alpha X_i,\dot \gamma)\vec{v}_\alpha +u_{j}u_k X_\ell g([X_i,X_j],X_{k}) \partial_{u_\ell}  +u_{j}u_k \xi_\beta g( [X_i,X_j],X_{k}) \partial_{v_\beta}.\\
 \dot{\vec{v}}_\beta &=
    [u_j\vec{u}_j,\vec{v}_\beta] = -\vec{v}_\beta(u_j)\vec{u}_j + u_j[\vec{u}_j,\vec{v}_\beta] = -\{v_\beta,u_j\}\vec{u}_j + u_j\overrightarrow{\{u_j,v_\beta\}} \\&= T(\xi_{\beta},X_{j},X_{k})u_{k} \vec{u}_{j}+u_j\overrightarrow{\{u_j,v_\beta\}}\\
    &=  T(\xi_{\beta},X_{j},X_{k})u_{k} \vec{u}_{j}-u_{j}[  -T(\xi_{\alpha},X_{j},X_{k})\vec{u}_k - K_{\alpha j}^{\ell} \partial_{u_{\ell}} -J_{\alpha j}^{\beta} \partial_{v_{\beta}}]\\
    &= [T(\xi_{\beta},X_{j},\dot \gamma)+T(\xi_{\beta},\dot \gamma,X_{j})] \vec{u}_{j} + K_{\alpha j}^{\ell}u_{j} \partial_{u_{\ell}} +J_{\alpha j}^{\beta}u_{j} \partial_{v_{\beta}}\qedhere
\end{align*}
\end{proof}
The previous proof is completed thanks to the next lemma.
\begin{lemma}\label{l7} In terms of Biquard curvature we have
\begin{itemize}
\item[(a)] $B_{i\ell}
=-u_{j}u_{k}X_{\ell}g([X_{j},X_{i}],X_{k})=R(\dot\gamma,
X_{\ell},X_{i},\dot \gamma)$
\item[(b)] $D_{i\beta} =
-u_{j}u_{k}\xi_{\beta}g([X_{j},X_{i}],X_{k})=R(\dot\gamma,\xi_{\beta},X_{i},\dot
\gamma)$  
\item[(c)]
$M_{\alpha
\ell}=-2v_{\tau}\rho_{\zeta}(X_l,\dot\gamma)+\frac12(\nabla_{X_{\ell}}T^0)(I_{\alpha}\dot\gamma,\dot\gamma)$,
\item[(d)]
$2N_{\alpha\beta}=-2v_{\tau}\rho_{\zeta}(\xi_{\beta},\dot\gamma)+
\frac12(\nabla_{\xi_{\beta}}T^0)(I_{\alpha}\dot\gamma,\dot\gamma)$,
\end{itemize}
where in the last two formulas $\{\alpha \tau \zeta\}$ is a cyclic permutation of $\{1 2 3\}$.
\end{lemma}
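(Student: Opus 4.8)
The plan is to verify the four identities by direct computation, in each case reducing a Lie-bracket or coordinate-derivative expression to covariant derivatives of the Biquard tensors via the Fermi frame of Lemma~\ref{fermi_frame}, and then invoking the qc structural identities recalled in Appendix~\ref{s:quatapp}. A convenient device, used throughout, will be to fix the time $t$ and introduce the \emph{frozen} horizontal vector field $\hat Y := \sum_m u_m(t)\,X_m$, which satisfies $\hat Y|_{\gamma(t)} = \dot\gamma(t)$ and $g(\hat Y,\hat Y)\equiv |u(t)|^2$, a constant. Since the coefficients $u_m(t)$ are constant, contracting a coordinate identity with $u_j u_k$ is the same as replacing $X_j,X_k$ by $\hat Y$; moreover the Fermi frame forces $\nabla_V\hat Y|_{\gamma(s)} = 0$ for every $V$ and every $s$, and $g(\nabla_V\hat Y,\hat Y) = \tfrac12 V\,g(\hat Y,\hat Y)\equiv 0$ near $\gamma(t)$.

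For parts (a) and (b) I would argue as follows. Because the horizontal torsion is $V$-valued, $g(T(\hat Y,X_i),\hat Y)\equiv 0$, hence $g([\hat Y,X_i],\hat Y) = g(\nabla_{\hat Y}X_i,\hat Y) - g(\nabla_{X_i}\hat Y,\hat Y)$; the last term is $-\tfrac12 X_i g(\hat Y,\hat Y)\equiv 0$, so its $W$-derivative vanishes for $W = X_\ell$ or $W=\xi_\beta$. Differentiating the first term along $W$ and using $\nabla_W\hat Y|_{\gamma(t)} = 0$ to discard the Leibniz cross-term leaves $g(\nabla_W\nabla_{\hat Y}X_i,\hat Y)|_{\gamma(t)}$; writing $\nabla_W\nabla_{\hat Y}X_i = R(W,\hat Y)X_i + \nabla_{\hat Y}\nabla_W X_i + \nabla_{[W,\hat Y]}X_i$ and noting $\nabla_{\hat Y}\nabla_W X_i|_{\gamma(t)} = \nabla_{\dot\gamma}(\nabla_W X_i)|_{\gamma(t)} = 0$ (since $\nabla_W X_i$ vanishes along $\gamma$ and $\hat Y|_{\gamma(t)}$ is tangent to $\gamma$), while $\nabla_{[W,\hat Y]}X_i|_{\gamma(t)} = 0$ (all first covariant derivatives of $X_i$ vanish on $\gamma$), this expression equals $g(R(W,\dot\gamma)X_i,\dot\gamma)$. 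Hence $B_{i\ell} = -R(X_\ell,\dot\gamma,X_i,\dot\gamma) = R(\dot\gamma,X_\ell,X_i,\dot\gamma)$ and $D_{i\beta} = -R(\xi_\beta,\dot\gamma,X_i,\dot\gamma) = R(\dot\gamma,\xi_\beta,X_i,\dot\gamma)$, using the antisymmetry of $R$ in its first two arguments.

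For parts (c) and (d) I would contract the definitions \eqref{kj1} with $u_i$ and treat the three summands separately. The summand involving $g(\nabla_{\xi_\alpha}X_i,X_k)$ becomes $W[g(\nabla_{\xi_\alpha}\hat Y,\hat Y)]|_{\gamma(t)}$, which vanishes since $g(\nabla_{\xi_\alpha}\hat Y,\hat Y)\equiv 0$. For the torsion summand, note first that $g(T(\xi_\alpha,\hat Y),\hat Y) = g(T^0(\xi_\alpha,\hat Y),\hat Y)$ identically, because the skew part $I_\alpha u$ of the torsion endomorphism satisfies $g(I_\alpha u\,X,X) = 0$ ($u$ symmetric, commuting with the $g$-skew $I_\alpha$); using the Fermi frame ($\nabla_W\xi_\alpha|_{\gamma(t)} = \nabla_W\hat Y|_{\gamma(t)} = \nabla_W I_\alpha|_{\gamma(t)} = 0$) this summand contributes $-(\nabla_W T^0)(\xi_\alpha,\dot\gamma,\dot\gamma)$, which by the structural identity \eqref{need} for the symmetric torsion, restricted to the diagonal, equals $\tfrac12(\nabla_W T^0)(I_\alpha\dot\gamma,\dot\gamma)$ — the torsion term appearing in (c) and (d). Finally the summand $v_\tau X_\ell d\eta_\alpha(\xi_\tau,X_i)$ becomes $v_\tau\,W[d\eta_\alpha(\xi_\tau,\hat Y)]|_{\gamma(t)}$, summed over $\tau$; here $d\eta_\alpha(\xi_\tau,\hat Y) = -\eta_\alpha([\xi_\tau,\hat Y]) = -g(\nabla_{\xi_\tau}\hat Y,\xi_\alpha) + g(\nabla_{\hat Y}\xi_\tau,\xi_\alpha)$ (the $\tau = \alpha$ term vanishing identically by \eqref{reeb1}), and after differentiation the surviving terms are mixed horizontal--vertical curvature components $R(W,\dot\gamma,\xi_\tau,\xi_\alpha)$, $R(W,\xi_\tau,\dot\gamma,\xi_\alpha)$ together with a second-order derivative term; the qc curvature identities of Appendix~\ref{s:quatapp}, which relate such components to the qc-Ricci $2$-forms, then collapse the $\tau$-sum to exactly $-2v_\tau\rho_\zeta(W,\dot\gamma)$ with $\{\alpha\tau\zeta\}$ cyclic. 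Summing the three contributions yields (c) and (d).

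The step I expect to be the main obstacle is this last one: identifying that the off-curve second-order derivatives produced by the $d\eta_\alpha(\xi_\tau,\cdot)$ term reassemble precisely into the qc-Ricci $2$-form combination $\rho_\zeta$, and that the sum over $\tau$ reduces to the single cyclic term. This is exactly where the qc structural identities (Appendix~\ref{s:quatapp}, \cite{IMV}, and the argument behind \cite[Theorem~4.2.5]{IV2}) carry the content; parts (a), (b) and the torsion contribution in (c), (d) reduce to Fermi-frame bookkeeping.
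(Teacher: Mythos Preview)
Your approach is the paper's: for (a)--(b) you expand the curvature via the Fermi frame exactly as the paper does (the frozen field $\hat Y$ just packages the paper's symmetric sums $u_ju_k$), and for (c)--(d) you split $K_{\alpha i}^\ell u_i$ into its three summands and handle the $\nabla_{\xi_\alpha}$- and torsion pieces precisely as the paper does.

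Two small corrections on the $d\eta_\alpha(\xi_\tau,\cdot)$-summand. First, since $\nabla$ preserves $H\oplus V$, the term $g(\nabla_{\xi_\tau}\hat Y,\xi_\alpha)$ vanishes identically, so only the single curvature component $R(W,\dot\gamma,\xi_\alpha,\xi_\tau)$ survives (the second component you list is zero for the same reason). Second, the step you flag as the main obstacle is in fact a one-line identity: property (iii) of the Biquard connection ($\nabla\varphi=0$) forces the $V$-block of the curvature to coincide with its $\mathfrak{sp}(1)$-part on $H$, giving $R(A,B,\xi_\alpha,\xi_\tau)=2\rho_{\alpha\tau}(A,B)$ directly---this is exactly what the paper uses, and the deeper structural identities \eqref{d3n5}--\eqref{d3n6} are not needed here. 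Finally, note that the expression $-2v_\tau\rho_\zeta$ still carries the implicit sum over $\tau$ with $\zeta=\alpha\tau$ as quaternion product, so both values $\tau\neq\alpha$ contribute; it does not collapse to a single cyclic term.
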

\begin{proof}[Proof of Lemma \ref{l7}] We use classical tricks in curvature calculations
\begin{align*}
    R(\dot\gamma,X_i,X_\ell,\dot\gamma) & = u_k u_j g(\nabla_{X_k}\nabla_{X_i}X_\ell - \nabla_{X_i}\nabla_{X_k}X_\ell - \nabla_{[X_k,X_i]}X_\ell , X_j) \\
    & =  \bcancel{u_k u_j X_k g(\nabla_{X_i}X_\ell,X_j)}  - \cancel{u_k u_j g(\nabla_{X_i}X_\ell,\nabla_{X_k}X_j)}  - u_k u_j X_i g(\nabla_{X_k}X_\ell,X_j) \\
    &\quad  + u_k u_j \cancel{g(\nabla_{X_k}X_\ell,\nabla_{X_i}X_j) } - u_k u_j g(\cancel{\nabla_{[X_k,X_i]}X_\ell}, X_j) .
    \end{align*}
    where the cancellation \cancel{XX} follows from properties of Fermi frame, while \bcancel{YY} is due to the identity $u_k X_k g(\nabla_{X_i}X_\ell,X_i) =0$ (notice that the last identity is the derivative in the direction of $\dot\gamma(t)$ of the following one $ g(\nabla_{X_i}X_\ell,X_j) |_{\gamma(t)} = 0$). Thus using that the torsion among horizontal vector fields  is vertical
\begin{align*}
    R(\dot\gamma,X_i,X_\ell,\dot\gamma)    & =    - u_k u_j X_i g(\nabla_{X_k}X_\ell,X_j)\\
    &=   -u_k u_j X_i g(\nabla_{X_\ell}X_k,X_j)-u_k u_j X_i g([X_{k},X_{\ell}],X_j).
    \end{align*}
On the other hand, the term $g(\nabla_{X_\ell}X_k,X_j)$ is skew-symmetric in $k,j$ and we have a symmetric sum so the first term is zero and
\begin{align*}
    R(\dot\gamma,X_i,X_\ell,\dot\gamma)=-u_k u_j X_i g([X_{k},X_{\ell}],X_j).
    \end{align*}
Along the same path one can show that
    \begin{align*}
    R(\dot\gamma,\xi_\beta,X_\ell,\dot\gamma)=-u_k u_j \xi_\beta
    g([X_{k},X_{\ell}],X_j).
    \end{align*}
Applying \eqref{d3n5}, \eqref{d3n6},  we have
\begin{align*}
    R(\dot\gamma,\xi_\beta,X_\ell,\dot\gamma)&=(\nabla_{\dot{\gamma}}U)(I_{\beta}X_{\ell},\dot{\gamma})-
\frac14(\nabla_{\dot{\gamma}}T^0)(I_{\beta}X_{\ell},\dot{\gamma})\\
& -\frac14(\nabla_{\dot{\gamma}}T^0)(X_{\ell},I_{\beta}\dot{\gamma})
+\frac12(\nabla_{X_{\ell}}T^0)(I_{\beta}\dot{\gamma},\dot{\gamma})\\
&-2g(I_{\tau}\dot{\gamma},X_{\ell})\rho_{\zeta}(I_{\beta}\dot{\gamma},\xi_{\beta})
+2g(I_{\zeta}\dot{\gamma},X_{\ell})\rho_{\tau}(I_{\beta}\dot{\gamma},\xi_{\beta}),
 \end{align*}
where $\{\beta \tau \zeta\}$ is a cyclic permutation of $\{1 2 3\}$. Further, we have using \eqref{kj1}
   \begin{align*}
   M_{\alpha\ell} =K_{\alpha i}^{\ell}u_{i}&=u_{i}v_{\tau}X_{\ell}d\eta_{\alpha}(\xi_{\tau},X_{i})
   -u_{i}u_{k}X_{\ell}T(\xi_{\alpha},X_{i},X_{k})
+u_{i}u_kX_{\ell}g(\nabla_{\xi_{\alpha}}X_{i},X_{k})\\
&=-u_iv_{\tau}X_{\ell}g(\nabla_{X_i}\xi_{\alpha},\xi_{\tau})+
\frac14u_{i}u_k(\nabla_{X_{\ell}}T^0)[(I_{\alpha}X_i,X_k)+(I_{\alpha}X_k,X_i)]\\
&=-2v_{\tau}\rho_{\zeta}(X_l,\dot\gamma)+\frac12(\nabla_{X_{\ell}}T^0)(I_{\alpha}\dot\gamma,\dot\gamma),
\end{align*}
where $\{\alpha \tau \zeta\}$ is a cyclic permutation of $\{1 2
3\}$, the skew-symmetric parts in $i$ and $k$ of the first line
are cancelled and the first term of the second line is evaluated
as follows
\begin{align*}
u_iX_{\ell}g(\nabla_{X_i}\xi_{\alpha},\xi_{\tau})=u_ig(\nabla_{X_{\ell}}\nabla_{X_i}\xi_{\alpha},\xi_{\tau})+
u_i\cancel{g(\nabla_{X_i}\xi_{\alpha},\nabla_{X_{\ell}}\xi_{\tau})}\\=u_ig(\nabla_{X_i}\nabla_{X_{\ell}}\xi_{\alpha},\xi_{\tau})+
u_iR(X_{\ell},X_i,\xi_{\alpha},\xi_{\tau})+\cancel{u_ig(\nabla_{[X_{\ell},X_i]}\xi_{\alpha},\xi_{\tau})}\\=
2\rho_{\zeta}(X_{\ell},\dot{\gamma})+\bcancel{u_iX_ig(\nabla_{X_{\ell}}\xi_{\alpha},\xi_{\tau})}
\end{align*}
Similarly one gets the formula for $N_{\alpha\beta}$
\end{proof}

\section{Symplectic products and canonical frame} \label{s:5}

In this section, for $n$-tuples
$v,w$ of vectors, the symbol
$\sigma(v,w)$ denotes the matrix $$\sigma(v,w):=(\sigma(v_i,w_j))_{i,j=1,\ldots,n},$$ 
whose entries are symplectic products. Notice that with this convention one has the identities
$$\sigma(v,w)^* = -\sigma(w,v), \qquad \sigma(Av,Bw)=A\sigma(v,w) B^{*}.$$ 
 where, for $n$-tuple of
vectors $v$  and a matrix $L$, the juxtaposition $Lv$ denotes the
$n$-tuple of vectors obtained by matrix multiplication.  
When $v,w$ are vector fields defined along an extremal $\lam(t)$, the following Leibniz rule holds
$$\frac{d}{dt} \sigma(v,w)
= \sigma(\dot{v},w) + \sigma(v,\dot{w}).$$

\subsection{Derivatives of $\partial_{v}$ and symplectic products of the coordinate frame}
By a direct computation one gets from Lemma \ref{l:fund} the following relations.
\begin{lemma}\label{l:dots} Along the extremal, we have
    \begin{align}\nn
    \dot\partial_v & = 2 A \partial_u, \\ \nn 
    \ddot\partial_v & = 2 \dot{A} \partial_u +2AG\partial_{v}- 2 A\vec{u}, \\ \nn
    \dddot{\partial_v} & = (4\dot A G+2A\dot G-2AD)\partial_v+(2\ddot A +4AGA -2AB)\partial_u+4 A A^{*}  \vec{v}+(-4\dot A-4 AC)\vec{u}\\ \nn 
&=(4\dot A G+2A\dot G-2AD)\partial_v+(2\ddot A +4AGA -2AB)\partial_u+4 \vec{v} -4(3VA+v\dot{\gamma}^*)\vec{u}
    \end{align}
\end{lemma}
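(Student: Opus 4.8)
The plan is to obtain Lemma~\ref{l:dots} by straightforward successive differentiation of the identity $\dot\partial_v = 2A\partial_u$ from Lemma~\ref{l:fund}, repeatedly invoking the Leibniz rule $\frac{d}{dt}\sigma(v,w)=\sigma(\dot v,w)+\sigma(v,\dot w)$ (here applied entrywise to vector-valued quantities, so really just $\frac{d}{dt}(L\,w)=\dot L\,w+L\dot w$ for a matrix $L$ and a tuple of vector fields $w$) together with the four structural equations
\begin{align*}
\dot\partial_v &= 2A\partial_u, & \dot\partial_u &= -\vec u + G\partial_v, \\
\dot{\vec u} &= 2C\vec u - 2A^*\vec v + B\partial_u + D\partial_v, & \dot{\vec v} &= L\vec u + M\partial_u + 2N\partial_v,
\end{align*}
which are the content of Lemma~\ref{l:fund}.

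First I would differentiate once more: $\ddot\partial_v = 2\dot A\partial_u + 2A\dot\partial_u = 2\dot A\partial_u + 2A(-\vec u + G\partial_v) = 2\dot A\partial_u + 2AG\partial_v - 2A\vec u$, which is the second line. Then I differentiate this expression term by term. The $\partial_u$-coefficient produces $2\ddot A\partial_u + 2\dot A\dot\partial_u$; the $\partial_v$-coefficient produces $2(\dot A G + A\dot G)\partial_v + 2AG\dot\partial_v$; and the $\vec u$-term produces $-2\dot A\vec u - 2A\dot{\vec u}$. Now substitute $\dot\partial_u = -\vec u + G\partial_v$, $\dot\partial_v = 2A\partial_u$, and $\dot{\vec u} = 2C\vec u - 2A^*\vec v + B\partial_u + D\partial_v$, and collect the coefficients of $\partial_v$, $\partial_u$, $\vec v$, $\vec u$ separately. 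This yields
$$
\dddot\partial_v = (4\dot A G + 2A\dot G - 2AD)\partial_v + (2\ddot A + 4AGA - 2AB)\partial_u + 4AA^*\vec v + (-4\dot A - 4AC)\vec u,
$$
which is exactly the third line in its first form. The final rewriting uses the explicit evaluations $AA^* = \id$ (that is, $A_{\alpha i}A_{\beta i} = g(I_\alpha\dot\gamma,X_i)g(I_\beta\dot\gamma,X_i) = g(I_\alpha\dot\gamma,I_\beta\dot\gamma) = \delta_{\alpha\beta}$, using the compatibility of the $I_\alpha$ with $g$ and the orthogonality established in Lemma~\ref{fermi_frame}) and the identity $\dot A + AC = 3VA + v\dot\gamma^*$, which comes from computing $\dot A_{\alpha i} = \frac{d}{dt}g(I_\alpha\dot\gamma,X_i)$ along the geodesic: using $\nabla_{\dot\gamma}\dot\gamma = -2v_\beta I_\beta\dot\gamma$ from Lemma~\ref{l:secder}, the quaternionic relations \eqref{eq:relaz1}, and the Fermi-frame property $\nabla_{\dot\gamma}X_i|_{\gamma(t)}=0$, one expands $\nabla_{\dot\gamma}(I_\alpha\dot\gamma) = (\nabla_{\dot\gamma}I_\alpha)\dot\gamma + I_\alpha\nabla_{\dot\gamma}\dot\gamma$ and notes that the first term vanishes along $\gamma$ by the last display in Lemma~\ref{fermi_frame}, leaving $-2v_\beta I_\alpha I_\beta\dot\gamma$; sorting the $\beta=\alpha$ term (which gives the $v\dot\gamma^*$ contribution) from the $\beta\ne\alpha$ terms (which recombine, together with the $AC$ term $C_{ij}=v_\beta g(I_\beta X_i,X_j)$, into $3VA$) gives the claimed identity.

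The only genuine obstacle is bookkeeping: making sure that every matrix is multiplied on the correct side and that the index contractions in the $\vec u$, $\vec v$ terms are consistent with the block conventions of Section~\ref{s:5} (in particular the sign rule $\sigma(v,w)^* = -\sigma(w,v)$ and $\sigma(Av,Bw)=A\sigma(v,w)B^*$, and the fact that $A$ is a $3\times 4n$ matrix so that $AA^*$ is $3\times 3$ while $A^*A$ would be $4n\times 4n$). There is no conceptual difficulty: everything is a mechanical consequence of Lemma~\ref{l:fund}, the Leibniz rule, the geodesic equation of Lemma~\ref{l:secder}, and the metric compatibility of the almost complex structures recorded in the proof of the fatness lemma. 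I would therefore present the computation compactly, displaying the intermediate collected form of $\dddot\partial_v$ and then quoting $AA^*=\id$ and $\dot A + AC = 3VA + v\dot\gamma^*$ as the two substitutions needed to reach the stated final expression.
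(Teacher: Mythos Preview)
Your proposal is correct and follows exactly the approach of the paper: the lemma is obtained by direct differentiation using Lemma~\ref{l:fund}, and the final rewriting uses the identities $AA^*=\mathbbold{1}$ and $\dot A+AC=3VA+v\dot\gamma^*$ (recorded in the paper as \eqref{idav} and \eqref{morid}). Your expanded derivation of $\dot A$ via $\nabla_{\dot\gamma}(I_\alpha\dot\gamma)$ matches the paper's computation in Lemma~\ref{idens}.
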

 To prove the fourth equality  we used  \eqref{idav} and \eqref{dota} below.
We then  compute symplectic products of the elements of the basis. 
\begin{lemma}\label{sympl}
The non-zero brackets between $\partial_{u_i},\partial_{v_\alpha},\vec{u}_i,\vec{v}_\alpha$ are
\begin{itemize}
\item[(a)]  $\sigma(\partial_u,\vec{u}) = \mathbbold{1}$,
\item[(b)] $ \sigma(\partial_v,\vec{v}) = \mathbbold{1}$,
\item[(c)] $\sigma(\vec{u},\vec{u}) =\{u_i,u_j\} = -2 C$,
\item[(d)] $\sigma(\vec{v},\vec{v}) =\{v_{\alpha},v_{\beta}\}=SV +\chi$,
\item[(e)]  $\sigma(\vec{u},\vec{v})=\{u_i,v_{\alpha}\}=P$,
\end{itemize}
where, according to \eqref{sixtyfour},  $$\chi_{\alpha\beta}=- \rho_{\alpha\beta}(I_{\beta}\dot\gamma,\xi_{\beta})=\rho_{\alpha\beta}(\xi_{\beta},I_{\beta}\dot\gamma)=-g([\xi_{\alpha},\xi_{\beta}],\dot{\gamma})=-\chi_{\beta\alpha},$$
 and, thanks to \eqref{need1},
 $$P_{i\alpha}=\{u_i,v_{\alpha}\}= T(\xi_{\alpha},X_i,\dot{\gamma})=T(\xi_{\alpha},\dot{\gamma},X_i)+2U(I_{\alpha}X_i,\dot{\gamma})=-G_{i\alpha}+2U(I_{\alpha}X_i,\dot{\gamma}).$$
\end{lemma}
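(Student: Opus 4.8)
The plan is to reduce every one of these symplectic products to a quantity already computed: either the pairing of a vertical vector against a Hamiltonian vector field (which is just a partial derivative of a fibre-linear function), or a Poisson bracket of two momentum functions, for which I would invoke the lemma on Poisson brackets together with the torsion identities \eqref{sixtyfour} and \eqref{need1} of the Appendix.

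First I would isolate the two elementary facts of symplectic calculus on $(T^*M,\sigma)$ that are needed. For $f\in C^\infty(T^*M)$ and any tangent vector $W$ to $T^*M$, the defining relation $\sigma(\cdot,\vec f)=df$ gives $\sigma(W,\vec f)=W(f)$; specialising to $W=\vec g$ and recalling that the paper's convention is $\{f,g\}=\vec f(g)$ (the one used for the Hamilton equations and in Lemma~\ref{l:fund}), this yields $\sigma(\vec f,\vec g)=\{f,g\}$. Since the fibre $\ver_{\lambda(t)}$ is Lagrangian and is spanned by the $\partial_{u_i},\partial_{v_\alpha}$, the first fact forces $\sigma(\partial_{u_i},\partial_{u_j})=\sigma(\partial_{v_\alpha},\partial_{v_\beta})=\sigma(\partial_{u_i},\partial_{v_\alpha})=0$; moreover $\sigma(\partial_{u_i},\vec v_\alpha)=\partial_{u_i}(v_\alpha)=0$ and $\sigma(\partial_{v_\alpha},\vec u_i)=\partial_{v_\alpha}(u_i)=0$ because $v_\alpha$ is independent of the $u$--coordinates and $u_i$ of the $v$--coordinates, while $\sigma(\partial_{u_i},\vec u_j)=\partial_{u_i}(u_j)=\delta_{ij}$ and $\sigma(\partial_{v_\alpha},\vec v_\beta)=\partial_{v_\alpha}(v_\beta)=\delta_{\alpha\beta}$ prove (a) and (b).

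For (c)--(e) I would use $\sigma(\vec u_i,\vec u_j)=\{u_i,u_j\}$, $\sigma(\vec v_\alpha,\vec v_\beta)=\{v_\alpha,v_\beta\}$ and $\sigma(\vec u_i,\vec v_\alpha)=\{u_i,v_\alpha\}$, evaluate along $\lambda(t)$, and substitute the formulas from the lemma on Poisson brackets. Part (c) there gives $\{u_i,u_j\}=-2g(I_\tau X_i,X_j)v_\tau=-2C_{ij}$, which is (c). Part (b) gives $\{v_\alpha,v_\beta\}=Sg(\xi_{\alpha\beta},\xi_\tau)v_\tau+\rho_{\alpha\beta}(I_\beta\dot\gamma,\xi_\beta)$; I would then read the first summand as the matrix $SV$ and the second as $\chi$ as displayed, checking through \eqref{sixtyfour} and identity (f) of Lemma~\ref{l:commutatori} that this torsion term is skew-symmetric and equals $-g([\xi_\alpha,\xi_\beta],\dot\gamma)$, which gives (d). Part (a), together with $\{u_i,v_\alpha\}=-\{v_\alpha,u_i\}$, gives $P_{i\alpha}=\{u_i,v_\alpha\}=T(\xi_\alpha,X_i,\dot\gamma)$. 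Finally, applying the torsion decomposition \eqref{need1} in the form $T(\xi_\alpha,X_i,\dot\gamma)=T(\xi_\alpha,\dot\gamma,X_i)+2U(I_\alpha X_i,\dot\gamma)$ and the definition $G_{i\alpha}=-T(\xi_\alpha,\dot\gamma,X_i)$ from Lemma~\ref{l:fund} turns this into $P_{i\alpha}=-G_{i\alpha}+2U(I_\alpha X_i,\dot\gamma)$, which is (e).

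The argument is almost entirely bookkeeping; the only points that require care are keeping the sign conventions for $\sigma(\vec f,\vec g)$ and for $\chi$ coherent with the lemma on Poisson brackets and with Lemma~\ref{l:fund}, and picking out exactly the right identities from the Appendix when rewriting $\{v_\alpha,v_\beta\}$ and $\{u_i,v_\alpha\}$ in terms of $S$, $\rho$, $T^0$ and $U$.
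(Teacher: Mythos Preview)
Your proposal is correct and follows the natural approach; the paper itself gives no proof of this lemma, treating items (a)--(e) as immediate consequences of the defining relation $\sigma(\cdot,\vec f)=df$, the already-computed Poisson brackets, and the Appendix identities \eqref{sixtyfour} and \eqref{need1}, exactly as you outline.
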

Notice that, by bilinearity of $\sigma$, Lemma~\ref{sympl} permits to compute the symplectic product of any pair of vectors.

\begin{lemma}[Several identities]\label{idens} We have the following identities
\begin{gather}
AA^*=\mathbbold{1}, \quad A\dot{\gamma}=0, \quad Vv=0, \quad AC=VA-v\dot{\gamma}^* ,  \quad
V^2=vv^*-||v||^2\mathbbold{1},\label{idav}\\
\ddot{\gamma}=-2A^*v=2C\dot{\gamma},  \label{secder}\\
\dot A
=2VA+2v\dot{\gamma}^*, \quad \ddot{A}=2\dot{V}A+2\dot{v}\dot{\gamma}^*-4\|v\|^2A. \label{dota}
\end{gather}
\end{lemma}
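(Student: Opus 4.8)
The goal is to establish Lemma~\ref{idens}, which collects a list of algebraic and differential identities about the matrices $A$, $C$, $V$, the velocity $\dot\gamma$ and the vertical momentum $v$.

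\medskip

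\noindent\textbf{Plan of proof.}
Most of these are direct consequences of the definitions $A_{\beta i}=g(I_\beta\dot\gamma,X_i)$, $C_{ij}=v_\alpha g(I_\alpha X_i,X_j)$, $V_{\alpha\beta}=v_\alpha v_\beta$ (with the $\mathfrak{so}(3)$ identification), combined with the orthogonality relations for $I_1\dot\gamma,I_2\dot\gamma,I_3\dot\gamma,\dot\gamma$ already proved in the fattness lemma in Section~\ref{s:2}, and with Lemma~\ref{l:secder}. I would proceed identity by identity.

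First the purely algebraic ones in \eqref{idav}. For $AA^*=\mathbbold 1$: the $(\alpha,\beta)$ entry is $\sum_i g(I_\alpha\dot\gamma,X_i)g(I_\beta\dot\gamma,X_i)=g(I_\alpha\dot\gamma,I_\beta\dot\gamma)$, which equals $\delta_{\alpha\beta}\|\dot\gamma\|^2=\delta_{\alpha\beta}$ by the orthogonality of the $I_\alpha\dot\gamma$ shown in Section~\ref{s:2} (here $\|\dot\gamma\|=1$ since the extremal is length-parametrized, $H=1/2$). For $A\dot\gamma=0$: the $\alpha$ entry is $\sum_i g(I_\alpha\dot\gamma,X_i)g(\dot\gamma,X_i)=g(I_\alpha\dot\gamma,\dot\gamma)=0$ by \eqref{eq:ppp}. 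The identity $Vv=0$ is immediate from the $\mathbb R^3\simeq\mathfrak{so}(3)$ convention, since $V$ acts as $v\times(\cdot)$. For $V^2=vv^*-\|v\|^2\mathbbold 1$ one uses the standard cross-product identity $v\times(v\times w)=v\langle v,w\rangle-\|v\|^2 w$. The relation $AC=VA-v\dot\gamma^*$ is the one requiring a short computation: the $(\beta,j)$ entry of $AC$ is $\sum_i g(I_\beta\dot\gamma,X_i)\,v_\alpha g(I_\alpha X_i,X_j)=v_\alpha g(I_\alpha I_\beta\dot\gamma,X_j)$ after contracting the $X_i$'s; then split $I_\alpha I_\beta=\delta_{\alpha\beta}(-\id)+\epsilon_{\alpha\beta\tau}I_\tau$ using \eqref{eq:relaz1}, so this becomes $-v_\beta g(\dot\gamma,X_j)+\epsilon_{\alpha\beta\tau}v_\alpha g(I_\tau\dot\gamma,X_j)$, which is exactly $-(v\dot\gamma^*)_{\beta j}+(VA)_{\beta j}$ under the $\mathfrak{so}(3)$ identification. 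This last one is the only mild obstacle — keeping track of index conventions and the sign of the $\mathfrak{so}(3)$ isomorphism — but it is bookkeeping, not a genuine difficulty.

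For \eqref{secder}: Lemma~\ref{l:secder} gives $\nabla_{\dot\gamma}\dot\gamma=-2v_\alpha I_\alpha\dot\gamma$; reading this in the Fermi frame, where covariant derivatives of frame vectors vanish along $\gamma$, the components are $\ddot\gamma_i=-2v_\alpha g(I_\alpha\dot\gamma,X_i)=-2(A^*v)_i$, giving $\ddot\gamma=-2A^*v$. On the other hand $(C\dot\gamma)_i=v_\alpha g(I_\alpha X_j,X_i)\dot\gamma_j=v_\alpha g(I_\alpha\dot\gamma,X_i)$, so $C\dot\gamma$ has the same components, hence $\ddot\gamma=2C\dot\gamma$ (the sign matches since $C$ is skew and $g(I_\alpha\dot\gamma,X_i)=-g(I_\alpha X_i,\dot\gamma)$). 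Finally \eqref{dota}: differentiate $A_{\beta i}=g(I_\beta\dot\gamma,X_i)$ along $\gamma$. Since the Fermi frame is parallel along $\gamma$ and $\nabla I_\beta$ terms vanish there too (last display of Lemma~\ref{fermi_frame}), $\dot A_{\beta i}=g(I_\beta\nabla_{\dot\gamma}\dot\gamma,X_i)=g(I_\beta(-2v_\alpha I_\alpha\dot\gamma),X_i)=-2v_\alpha g(I_\beta I_\alpha\dot\gamma,X_i)$; splitting $I_\beta I_\alpha$ as before yields $\dot A=2VA+2v\dot\gamma^*$. Differentiating once more and using $\dot{\dot\gamma}=\ddot\gamma=-2A^*v$, $\dot A A^*$-type reductions together with $AA^*=\mathbbold 1$ and $Vv=0$ collapses the cross terms to give $\ddot A=2\dot V A+2\dot v\dot\gamma^*-4\|v\|^2 A$. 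I would organize the write-up so that each identity cites only \eqref{eq:relaz1}, \eqref{eq:ppp}, the orthogonality from Section~\ref{s:2}, Lemma~\ref{fermi_frame}, and Lemma~\ref{l:secder}, and present the computations in the order \eqref{idav}, \eqref{secder}, \eqref{dota} since the later ones reuse the earlier ones.
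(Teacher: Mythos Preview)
Your proof is correct and follows essentially the same route as the paper. The paper is terser, saying \eqref{idav} ``follow directly from the definitions'' and \eqref{secder} ``is precisely \eqref{geodesic}''; for $\dot A$ the paper computes via the Poisson bracket $\dot A_{\beta i}=\{H,A_{\beta i}\}$ rather than by covariant differentiation in the Fermi frame as you do, but both reduce immediately to $g(I_\beta\nabla_{\dot\gamma}\dot\gamma,X_i)$ and proceed identically from there (in fact the paper defers the $\ddot A$ computation to the proof of Corollary~\ref{idens1}, where it carries out exactly the substitution you describe).
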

\begin{proof}The identities \eqref{idav} follow directly from the definitions while \eqref{secder} is precisely \eqref{geodesic} written in terms of $A,C$ and $v$.
For \eqref{dota},
working in the Fermi
frame along $\gamma$, we have
\begin{align*}
\dot A_{\beta i}&=\{H,A_{\beta i}\}=u_j\vec{u_j}(A_{\beta i})=-u_j\vec{A_{\beta i}}(u_j)\\
&=u_jX_l(g(I_{\beta}\dot{\gamma},X_i))\partial_{{u_l}}(u_j)+u_j\xi_{\tau}(g(I_{\beta}\dot{\gamma},X_i))\partial_{{v_{\tau}}}(u_j)\\
&=u_jX_j(g(I_{\beta}\dot{\gamma},X_i))=g(I_{\beta}\nabla_{\dot{\gamma}}\dot{\gamma},X_i)=-2v_{\alpha}g(I_{\beta}I_{\alpha}\dot{\gamma},X_i) =2V_{\beta\alpha}A_{\alpha i}
+2v\dot{\gamma}^*\,
\end{align*}
where we used \eqref{geodesic} to get the last equality.
\end{proof}
\begin{corollary}\label{idens1} We also have
\begin{gather}\label{morid}
\dot{A}A^*=2V,\quad A\dot{A}^*=-2V, \quad ACA^*=V, \quad  \dot{A}+AC=3VA+v\dot{\gamma}^*, \quad V^3=-||v||^2V, \\\label{iddota} \ddot{A}=2\dot{V}A+2\dot{v}\dot{\gamma}^*-4\|v\|^2A, \quad  A\ddot{A}^*=-2\dot{V}-4\|v\|^2\mathbbold{1}
\end{gather}
\end{corollary}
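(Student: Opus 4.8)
The plan is to derive each identity in Corollary~\ref{idens1} directly from the relations already established in Lemma~\ref{idens}, treating them as purely algebraic consequences. First I would record the ingredients: $AA^* = \mathbbold{1}$, $A\dot\gamma = 0$, $Vv = 0$, $AC = VA - v\dot\gamma^*$, $V^2 = vv^* - \|v\|^2\mathbbold{1}$, $\dot A = 2VA + 2v\dot\gamma^*$, together with $V^* = -V$ (skew-symmetry of the $\mathfrak{so}(3)$ matrix) and $\dot\gamma^* A^* = (A\dot\gamma)^* = 0$, $\dot\gamma^*\dot\gamma = 2H = 1$ along a length-parametrized extremal (or a positive constant in general; the precise normalization will not matter for the algebra).

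The first row of \eqref{morid} is immediate. For $\dot A A^* = 2V$: compute $\dot A A^* = (2VA + 2v\dot\gamma^*)A^* = 2V(AA^*) + 2v(\dot\gamma^* A^*) = 2V\mathbbold{1} + 0 = 2V$. Transposing and using $V^* = -V$ gives $A\dot A^* = -2V$. For $ACA^* = V$: from $AC = VA - v\dot\gamma^*$ we get $ACA^* = VAA^* - v\dot\gamma^*A^* = V - 0 = V$. The identity $\dot A + AC = 3VA + v\dot\gamma^*$ follows by adding $\dot A = 2VA + 2v\dot\gamma^*$ and $AC = VA - v\dot\gamma^*$. For $V^3 = -\|v\|^2 V$: multiply $V^2 = vv^* - \|v\|^2\mathbbold{1}$ by $V$ on the left, obtaining $V^3 = V vv^* - \|v\|^2 V = (Vv)v^* - \|v\|^2 V = -\|v\|^2 V$ since $Vv = 0$.

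For the second row, the formula $\ddot A = 2\dot V A + 2\dot v\dot\gamma^* - 4\|v\|^2\mathbbold{1}$ is simply \eqref{dota} restated (it already appears in Lemma~\ref{idens}), but I would still indicate the derivation: differentiate $\dot A = 2VA + 2v\dot\gamma^*$ using the Leibniz rule to get $\ddot A = 2\dot V A + 2V\dot A + 2\dot v\dot\gamma^* + 2v\ddot\gamma^*$, then substitute $\dot A = 2VA + 2v\dot\gamma^*$ and $\ddot\gamma = -2A^*v$ (from \eqref{secder}). The cross terms combine: $2V\dot A = 4V^2 A + 4Vv\dot\gamma^* = 4V^2 A$ (using $Vv = 0$) $= 4(vv^* - \|v\|^2\mathbbold{1})A = 4v(v^*A) - 4\|v\|^2 A$, and $v^* A = (A^*v)^* = -\tfrac12\ddot\gamma^*$, so $2v\ddot\gamma^* + 4v(v^*A) = 2v\ddot\gamma^* - 2v\ddot\gamma^* = 0$. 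What survives is exactly $2\dot V A + 2\dot v\dot\gamma^* - 4\|v\|^2 A$. Finally $A\ddot A^* = -2\dot V - 4\|v\|^2\mathbbold{1}$: take the transpose of the $\ddot A$ formula, $\ddot A^* = 2A^*\dot V^* + 2\dot\gamma\dot v^* - 4\|v\|^2\mathbbold{1} = -2A^*\dot V + 2\dot\gamma\dot v^* - 4\|v\|^2\mathbbold{1}$, then multiply by $A$ on the left: $A\ddot A^* = -2(AA^*)\dot V + 2(A\dot\gamma)\dot v^* - 4\|v\|^2 A = -2\dot V + 0 - 4\|v\|^2\mathbbold{1}$, using $AA^* = \mathbbold{1}$ and $A\dot\gamma = 0$.

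I do not expect a serious obstacle here, since every step is a one-line algebraic manipulation from identities already proved. The only point requiring mild care is the bookkeeping in the $\ddot A$ and $A\ddot A^*$ computations, where one must correctly invoke $\ddot\gamma = -2A^*v$ and the skew-symmetry of $V$ so that the apparently extra terms cancel; a reader could double-check by verifying that $A\dot A^* + \dot A A^* = 0$ (consistent with differentiating $AA^* = \mathbbold{1}$) and that $A\ddot A^* + 2\dot A\dot A^* + \ddot A A^* = 0$ (from differentiating twice), using $\dot A\dot A^* = (2VA + 2v\dot\gamma^*)(2A^*V^* + 2\dot\gamma v^*) = -4V^2 + \dots$, which serves as a useful consistency check on the signs.
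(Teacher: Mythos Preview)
Your approach is correct and essentially identical to the paper's: both derive the first row directly from \eqref{idav} and \eqref{dota}, differentiate $\dot A$ and substitute $\ddot\gamma=-2A^*v$ to obtain $\ddot A$, and then multiply by $A$ (equivalently, transpose and use $AA^*=\mathbbold{1}$, $A\dot\gamma=0$) for $A\ddot A^*$. A couple of harmless typos to fix before submission: in the transpose step you wrote $-4\|v\|^2\mathbbold{1}$ where it should be $-4\|v\|^2 A^*$, and in the following line $-4\|v\|^2 A$ should be $-4\|v\|^2 AA^*$; your final expressions are nonetheless correct.
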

\begin{proof}
The identities \eqref{morid} follow directly from \eqref{idav} and \eqref{dota}.  For the first one in \eqref{iddota}, we take the derivative of \eqref{dota}  applying \eqref{idav} and \eqref{secder} to get
that
\begin{multline*}
 \ddot{A}=2\dot{V}A+2V\dot{A}+2\dot{v}\dot{\gamma}^*+2v\ddot{\gamma}^*=2\dot{V}A+2V(2VA+2v\dot{\gamma}^*)+2\dot{v}\dot{\gamma}^*-4vv^*A\\
=2\dot{V}A+4(vv^*-||v||^2\mathbbold{1})A+2\dot{v}\dot{\gamma}^*-4vv^*A=2\dot{V}A+2\dot{v}\dot{\gamma}^*-4\|v\|^2A.
\end{multline*}
The second identity in \eqref{iddota} follows from the first one applying \eqref{idav}.
\end{proof}

\begin{lemma}[Derivative of $V$]\label{dtv}
We have $\dot{V}\not=0$. In particular
\begin{equation}\label{dotv}
\dot{v}_{\tau}= T(\xi_{\tau},\dot{\gamma},\dot{\gamma}).
\end{equation}
In vector notation $\dot v=-G^*\dot{\gamma}$. In particular  
if $T^0=0$ then $\dot{v}=\dot{V}=0.$
\end{lemma}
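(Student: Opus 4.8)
The plan is to recognize $v_\alpha(t)$ as the momentum function $v_\alpha$ restricted to the extremal $\lambda(t)$, so that its $t$-derivative is a single Poisson bracket against the Hamiltonian, and then to substitute the bracket $\{u_i,v_\alpha\}$ which has already been computed along $\lambda(t)$. Since $\lambda(t)$ solves the Hamilton equations and $\dot f$ denotes the Lie derivative along $\vec H$, we have $\dot v_\alpha=\{H,v_\alpha\}$. Writing $H=\tfrac12 u_iu_i$ and using the Leibniz rule for $\{\cdot,\cdot\}$ (together with $\{H,u_i\}=u_k\{u_k,u_i\}$ being irrelevant here) gives $\dot v_\alpha=u_i\{u_i,v_\alpha\}$.

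Next I would invoke item (a) of the Poisson brackets lemma, which states that along $\lambda(t)$ one has $\{v_\alpha,u_i\}=-T(\xi_\alpha,X_i,\dot\gamma)$, hence $\{u_i,v_\alpha\}=T(\xi_\alpha,X_i,\dot\gamma)$. Substituting and writing $\dot\gamma=u_jX_j$ yields $\dot v_\alpha=u_iu_j\,T(\xi_\alpha,X_i,X_j)$. This is the one point worth stating carefully: the coefficient $u_iu_j$ is symmetric in $i,j$, so only the $g$-symmetric part of the torsion endomorphism $T(\xi_\alpha,\cdot)_{|\distr}$ survives the contraction; since its skew-symmetric part is exactly $I_\alpha u$ (with $u$ symmetric, commuting with $I_\alpha$, hence $I_\alpha u$ $g$-skew), we obtain $\dot v_\alpha=T^0(\xi_\alpha,\dot\gamma,\dot\gamma)=T(\xi_\alpha,\dot\gamma,\dot\gamma)$, which is \eqref{dotv}. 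Recalling from Lemma~\ref{l:fund} that $G_{i\alpha}=-T(\xi_\alpha,\dot\gamma,X_i)$, the very same contraction identifies $\dot v=(\dot v_1,\dot v_2,\dot v_3)$ with $-G^*\dot\gamma$ (reading $\dot\gamma$ as its coordinate vector $u$).

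Finally, since $V$ is the image of the vector $v$ under the identification $\R^3\simeq\mathfrak{so}(3)$, it depends linearly on $v$, so $\dot V=0$ if and only if $\dot v=0$. When $T^0=0$ the torsion endomorphism $T(\xi_\alpha,\cdot)_{|\distr}=I_\alpha u$ is $g$-skew-symmetric, whence $T(\xi_\alpha,\dot\gamma,\dot\gamma)=0$ for every geodesic and $\dot v=\dot V=0$; in general $T^0\not\equiv 0$, so $\dot V$ is not identically zero, which is the assertion $\dot V\neq 0$. I do not expect a genuine obstacle here: the computation is short, and the only thing to keep straight is the sign convention for $\{u_i,v_\alpha\}$ and the observation that the symmetric contraction annihilates the $I_\alpha u$ term, turning $T$ into $T^0$.
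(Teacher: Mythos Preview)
Your argument is correct and follows exactly the paper's route: compute $\dot v_\tau=\{H,v_\tau\}=u_j\{u_j,v_\tau\}=u_jT(\xi_\tau,X_j,\dot\gamma)=T(\xi_\tau,\dot\gamma,\dot\gamma)$ via the already-computed Poisson bracket. Your additional remarks about the symmetric contraction killing the $I_\alpha u$ part, the identification $\dot v=-G^*\dot\gamma$, and the $T^0=0$ case are all correct elaborations the paper leaves implicit.
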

\begin{proof}It easily follows by $$
\dot{v}_{\tau}=\{H,v_{\tau}\}=u_{j}\{u_j,v_{\tau}\}=u_jT(\xi_{\tau},X_{j},\dot{\gamma})=T(\xi_{\tau},\dot{\gamma},\dot{\gamma}).\hfill \qedhere$$
\end{proof}

\subsection{Computation of symplectic products}

Now we deduce symplectic products of derivatives of the vector  $\partial_{v}$ using Lemma~\ref{l:dots},  Lemma~\ref{sympl}, Lemma~\ref{idens} and Corollary~\ref{idens1}.

\begin{lemma} \label{l:pro2}
We have
\begin{gather}
\sigma(\partial_v,\partial_v) = 0, \quad  \sigma(\partial_v,\dot \partial_v) = 0, \quad \sigma(\partial_v,\ddot \partial_v) = 0, \quad \sigma(\partial_v,\dddot \partial_v)=4\mathbbold{1}, \label{sigma0}\\ \sigma(\dot\partial_v,\dot \partial_v)=0,\quad
\sigma(\dot\partial_v,\ddot \partial_v)=-4\mathbbold{1}, \quad  \sigma(\dot \partial_v,\dddot \partial_v)=24V,\quad \sigma( \ddot \partial_v, \ddot \partial_v)= -24V,  \label{sigma1}
\end{gather}
\end{lemma}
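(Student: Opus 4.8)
The plan is to evaluate each symplectic product directly, substituting the formulas for $\dot\partial_v$, $\ddot\partial_v$ and $\dddot\partial_v$ from Lemma~\ref{l:dots}, expanding by bilinearity with the rules $\sigma(Lv,Mw)=L\sigma(v,w)M^*$ and $\sigma(v,w)^*=-\sigma(w,v)$, and then reading off the elementary brackets from Lemma~\ref{sympl}. The structural fact I would invoke at every step is that among $\partial_{u_i},\partial_{v_\alpha},\vec u_i,\vec v_\alpha$ the only nonzero brackets are $\sigma(\partial_u,\vec u)=\mathbbold{1}$, $\sigma(\partial_v,\vec v)=\mathbbold{1}$, $\sigma(\vec u,\vec u)=-2C$, $\sigma(\vec u,\vec v)=P$ and $\sigma(\vec v,\vec v)=SV+\chi$; in particular $\partial_u$ and $\partial_v$ together span a Lagrangian subspace, so any bracket between two of them vanishes, and also $\sigma(\partial_v,\vec u)=\sigma(\partial_u,\vec v)=0$.

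With this in hand, the first three identities in \eqref{sigma0} are immediate: $\sigma(\partial_v,\partial_v)=0$ since all $\partial_{v_\alpha}$ are vertical; $\sigma(\partial_v,\dot\partial_v)=0$ because $\dot\partial_v=2A\partial_u$ is again a combination of the $\partial_{u_i}$; and in $\ddot\partial_v=2\dot A\partial_u+2AG\partial_v-2A\vec u$ the only term that could pair nontrivially with $\partial_v$ is $-2A\vec u$, which is killed by $\sigma(\partial_v,\vec u)=0$. For the fourth one, inspecting $\dddot\partial_v=(4\dot AG+2A\dot G-2AD)\partial_v+(2\ddot A+4AGA-2AB)\partial_u+4\vec v-4(3VA+v\dot\gamma^*)\vec u$, only the term $4\vec v$ survives the pairing with $\partial_v$, giving $\sigma(\partial_v,\dddot\partial_v)=4\sigma(\partial_v,\vec v)=4\mathbbold{1}$.

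For \eqref{sigma1} I would continue along the same lines. One has $\sigma(\dot\partial_v,\dot\partial_v)=4A\sigma(\partial_u,\partial_u)A^*=0$. For $\sigma(\dot\partial_v,\ddot\partial_v)$ the only surviving pairing is $\sigma(2A\partial_u,-2A\vec u)=-4A\sigma(\partial_u,\vec u)A^*=-4AA^*=-4\mathbbold{1}$, using $AA^*=\mathbbold{1}$ from \eqref{idav}. For $\sigma(\dot\partial_v,\dddot\partial_v)=2A\,\sigma(\partial_u,\dddot\partial_v)$, only the $\vec u$-term of $\dddot\partial_v$ contributes, giving $\sigma(\partial_u,\dddot\partial_v)=-4(3VA+v\dot\gamma^*)^*=12A^*V-4\dot\gamma v^*$ (here one uses $V^*=-V$), and multiplying on the left by $2A$ and using $AA^*=\mathbbold{1}$ and $A\dot\gamma=0$ collapses this to $24V$. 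Finally, expanding $\sigma(\ddot\partial_v,\ddot\partial_v)$ with $\ddot\partial_v=2\dot A\partial_u+2AG\partial_v-2A\vec u$, the only surviving terms are $-4\dot AA^*+4A\dot A^*-8ACA^*$; here $ACA^*=V$ by Corollary~\ref{idens1}, while $\dot AA^*=2V$ and $A\dot A^*=-2V$ from \eqref{morid} give the total $-8V-8V-8V=-24V$.

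The only place where care is needed is the bookkeeping of transposes and signs: one must consistently track the $M^*$ factor in $\sigma(Lv,Mw)=L\sigma(v,w)M^*$, the minus sign in $\sigma(v,w)^*=-\sigma(w,v)$ (so that, e.g., $\sigma(\vec u,\partial_u)=-\mathbbold{1}$), and the skew-symmetry $V^*=-V$ when simplifying the $\vec u$-contribution to $\dddot\partial_v$. Beyond that, everything reduces to the mechanical identities $AA^*=\mathbbold{1}$, $A\dot\gamma=0$, $\dot AA^*=2V$, $A\dot A^*=-2V$, $ACA^*=V$. As a sanity check, all the relations in \eqref{sigma0} and \eqref{sigma1} are mutually consistent under the Leibniz rule $\frac{d}{dt}\sigma(v,w)=\sigma(\dot v,w)+\sigma(v,\dot w)$; for instance differentiating $\sigma(\partial_v,\ddot\partial_v)=0$ recovers $\sigma(\partial_v,\dddot\partial_v)=-\sigma(\dot\partial_v,\ddot\partial_v)=4\mathbbold{1}$, which one could alternatively use to shorten the argument.
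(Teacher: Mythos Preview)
Your proof is correct and follows essentially the same route as the paper: both expand the derivatives via Lemma~\ref{l:dots}, evaluate the brackets using Lemma~\ref{sympl}, and simplify with the identities of Lemma~\ref{idens} and Corollary~\ref{idens1}. The only minor variation is that you compute $\sigma(\ddot\partial_v,\ddot\partial_v)$ directly (obtaining $-4\dot AA^*+4A\dot A^*-8ACA^*=-24V$), whereas the paper derives it by differentiating $\sigma(\dot\partial_v,\ddot\partial_v)=-4\mathbbold{1}$ via the Leibniz rule---an alternative you yourself note at the end.
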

\begin{proof}
The identities \eqref{sigma0} and the first two ones in \eqref{sigma1} follow from Lemma~\ref{l:dots},  Lemma~\ref{sympl} and  Lemma~\ref{idens}. We calculate
$ \sigma(\dot \partial_v,\dddot \partial_v)=2A(-12VA-4v\dot{\gamma}^*)^* =-24AA^*V^*-8A\dot{\gamma}v^*=24V$ which proves the third one in \eqref{sigma1}.
Differentiating the second identity in \eqref{sigma1}, one gets $0=\sigma( \ddot \partial_v, \ddot \partial_v)+\sigma( \dot \partial_v, \dddot \partial_v)=\sigma( \ddot \partial_v, \ddot \partial_v)+24V$ which yields the third one in \eqref{sigma1}.
\end{proof}

\subsection{Canonical frame} To compute $\mc{R}_{cc}=\sigma(\dot{F}_c,F_c)$, we need to compute the elements of the canonical basis up to $F_{c}$.
The algorithm to recover them (following the general construction developed in \cite{lizel}) starts from identifying $E_{a}$ and then works as follows:
\begin{equation*}
E_a \rightarrow E_b \rightarrow F_b  \rightarrow E_c  \rightarrow F_c \rightarrow \mc{R}_{cc}
\end{equation*}
The triplet $E_a$ is determined by the following four conditions:
\begin{itemize}
    \item[(i)]$\pi_*E_a=0$,
    \item[(ii)]$\pi_*\dot{E}_a=0$,
    \item[(iii)] $\sigma(\ddot{E}_a,\dot{E}_a)=\mathbbold{1}$,
    \item[(iv)] $\sigma(\ddot{E}_a,\ddot{E}_a) =\mathbbold{0}$.
\end{itemize}
Items (i) and (ii) imply that there exists $M \in \mathrm{GL}(3)$ such that $E_a = M \partial_v$. Condition (iii) implies that $M = \tfrac{1}{2} O$ with $O \in \mathrm{O}(3)$. Finally, (iv) implies that $O$ satisfies the differential equation
    \begin{equation}\label{doto}
    \dot{O} = \frac{1}{16} O \sigma(\ddot{\partial}_{v_\alpha},\ddot{\partial}_{v_\beta}) =  -\frac{3}{2}OV.
    \end{equation}
Its solution is unique up to an orthogonal transformation (the initial condition, that we set $O(0) = \mathbbold{1}$).
Using the structural equations  together with \eqref{doto}, we have
\begin{align}\nn
E_a & = \tfrac{1}{2} O \partial_{v},\\ \nn
E_b & = \dot{E}_a =\tfrac{1}{2}O (-\frac32V\partial_v + \dot\partial_v) , \\
F_b & = -\dot{E}_b = 
-  \tfrac{1}{2} O[(\frac94V^2-\frac32\dot{V}) \partial_v -
3V \dot\partial_v + \ddot{\partial}_v]  \label{fb}
\end{align}
Thus we can also compute \begin{align}
\dot{F}_b  &  =
- \tfrac{1}{2} O[((-\frac{27}8V^3+\frac{27}4V\dot{V}-\frac32\ddot{V}) \partial_v
+3( \frac94V^2-\frac32\dot{V}) \dot\partial_v -\frac92V \ddot{\partial}_v +\dddot{\partial}_v].\label{dfb}
\end{align}

\medskip

The next step is to compute $E_c$. It is determined by the following conditions:
   \begin{itemize}
       \item[(i)] $\pi_*E_c=0$,
     \item[(ii)] $\sigma(E_c,F_c)=\mathbbold{1}$ and $\sigma(E_c,F_b)=\sigma(E_c,F_a)=\mathbbold{0}$,
      \item[(iii)] $\pi_*\ddot{E}_c=0$.
  \end{itemize}
For (i) we can write
\begin{equation}\label{ec}E_c=Y\partial_u+W\partial_v, \quad F_c=-\dot{E_c}=-(\dot{Y}+2WA)\partial_u-(YG+\dot{W})\partial_v+Y\vec{u}.
\end{equation}
where $Y$ is a $(4n-3)\times 4n$ matrix and $W$ is a $(4n-3)\times
3$ matrix.

\medskip
To compute $\sigma(E_c,F_c), \sigma(E_c,F_b), \sigma(E_c,\dot{F}_b) $ using \eqref{ec}, \eqref{fb} and \eqref{dfb},  we need to know
$$\sigma(\partial_u,\dot{\partial_v}),\ \sigma(\partial_u,\ddot{\partial_v}),\ \sigma(\partial_u,\dddot{\partial_v}),\ \sigma(\partial_v,\dot{\partial_v}),\ \sigma(\partial_v,\ddot{\partial_v}),\  \sigma(\partial_v,\dddot{\partial_v}).$$
The only non-zero terms using Lemma~\ref{l:dots} are given by:
\begin{gather*}  \sigma(\partial_u,\ddot{\partial_v})=-2A^*, \quad  \sigma(\partial_v,\dddot{\partial_v})=4\mathbbold{1},\\
\sigma(\partial_u,\dddot{\partial_v})=-4(\dot{A}^*+C^*A^*)=-4(3A^*V^*+\dot{\gamma}v^*).
\end{gather*}
For  (ii),  observing that $\sigma(E_c,F_a)=\mathbbold{0}$ implies
$\sigma(E_c,\dot{F}_b)=\mathbbold{0}$,  we get
\begin{equation}\label{uu}
  YY^*  =\mathbbold{1}, \qquad YA^*  =0, \qquad W  = Y(\dot{A}^*+C^*A^*)=Y(3A^*V^*+\dot{\gamma}v^*)=Y\dot{\gamma}v^*.
 \end{equation}
We obtain from \eqref{ec} and \eqref{uu}
$$\pi_*\ddot{E}_c=2(\dot{Y}+WA+YC)\vec{u}.$$
Finally, using (iii) and the equality above, we get that $Y$ must
satisfy
 \begin{equation}\label{eq:eqforU}
\dot{Y}  =-W A -YC=-Y(\dot{A}^*A+C^*A^*+C)=-Y(\dot{\gamma}v^*A+C).
\end{equation}
 Using Lemma~\ref{idens}, \eqref{uu}, \eqref{eq:eqforU} and \eqref{dotv}, we have
\begin{align}\label{dotw}
\dot{W} &=
\dot{Y}\dot{\gamma}v^*+Y\ddot{\gamma}v^*+Y\dot{\gamma}\dot{v}^*=-Y\dot{\gamma}v^*A\dot{\gamma}v^*-YC\dot{\gamma}v^*-2YA^*vv^*+Y\dot{\gamma}T(\xi_{\tau},\dot{\gamma},\dot{\gamma})\\\nn &=YA^*vv^*+Y\dot{\gamma}T(\xi_{\tau},\dot{\gamma},\dot{\gamma})=Y\dot{\gamma}T(\xi_{\tau},\dot{\gamma}.\dot{\gamma})
\end{align}
 Observe that $Y$ represents an orthogonal projection on
$\distr \cap
\spn\{I_{\alpha}\dot\gamma,I_{\beta}\dot\gamma,I_{\tau}\dot\gamma\}^\perp$.
Then
\begin{equation*}
   Y^*Y =\mathbbold{1} - A^*A.
\end{equation*}
Substitute \eqref{eq:eqforU} into the second equality of \eqref{ec} to get
 \begin{equation}\label{fccg}F_c=(YC-WA)\partial_u-(YG+\dot{W})\partial_v+Y\vec{u}.\end{equation}
We calculate from \eqref{fccg} using Lemma~\ref{l:fund},
\eqref{uu}, \eqref{eq:eqforU} and \eqref{dotw} that
\begin{multline}\label{ddfcg}
\dot{F}_c=(\dot{Y}C+Y\dot{C}-\dot{W}A-W\dot{A})\partial_u-(\dot{Y}G+Y\dot{G}+\ddot{W})\partial_v+\dot{Y}\vec{u}\\
+(YC-WA)(-\vec{u}+G\partial_v)
-2(YG+\dot{W})A\partial_u+Y(2C\vec{u}-2A^*\vec{v}
+B\partial_u+D\partial_v)\\
=(\dot{Y}C+Y\dot{C}-\dot{W}A-W\dot{A}+YB-2YGA-2\dot{W}A))\partial_u
-(\dot{Y}G+Y\dot{G}+\ddot{W}-YCG+WAG-YD))\partial_v\\
+\cancel{(\dot{Y}+WA-YC+2YC)}\vec{u}-2\cancel{YA^*}\vec{v}\\
=(\dot{Y}C+Y\dot{C}-\dot{W}A-W\dot{A}+YB-2YGA-2\dot{W}A))\partial_u
-(\dot{Y}G+Y\dot{G}+\ddot{W}-YCG+WAG-YD))\partial_v,
\end{multline}
where the cancellations in the fourth line follow from \eqref{eq:eqforU} and \eqref{uu}.

Applying Lemma~\ref{sympl}, Lemma~\ref{idens},  \eqref{uu} and \eqref{eq:eqforU} we obtain from \eqref{fccg} and \eqref{ddfcg}
\begin{align}\label{rccg}
\mc{R}_{cc}&=\sigma(\dot{F}_c,F_c)=(\dot{Y}C+Y\dot{C}-\dot{W}A-W\dot{A}+YB-2YGA-2\dot{W}A)Y^*\\
&=\dot{Y}CY^*+Y\dot{C}Y^*-\dot{W}AY^*-W\dot{A}Y^*+YBY^*-\dot{W}AY^*\nn\\
&=-Y(\dot{\gamma}v^*AC+CC)Y^*+YBY^*-W(2VA+2v\dot{\gamma}^*)Y^*+Y\dot{C}Y^*\nn\\
&=-Y(\dot{\gamma}v^*AC+CC)Y^*+YBY^*-2Y\dot{\gamma}v^*v\dot{\gamma}^*Y^*+Y\dot{C}Y^*\nn\\
&=-Y(\dot{\gamma}v^*AC+CC-B+2\|v\|^2\dot{\gamma}\dot{\gamma}^*-\dot{C})Y^*\nn\\
&=-Y(\dot{\gamma}v^*(VA-v\dot{\gamma}^*)-\|v\|^2\mathbbold{1}-B+2\|v\|^2\dot{\gamma}\dot{\gamma}^* -\dot{C})Y^*\nn\\
&=Y[B+ \dot{C}+\|v\|^2(1-\dot\gamma\dot\gamma^*)]Y^*,\nn
\end{align}
where we used $C_{ij}C_{jk}=-v_{\alpha}v_{\beta}g(I_{\alpha}X_i,I_{\beta}X_k)=-||v||^2\mathbbold{1}$.

\begin{lemma} \label{dotc}(Derivative of $C$ along $\gamma(t)$)
We have
\begin{equation*}
\dot{C}_{ki}=-
g(I_{\tau}X_k,X_i)T(\xi_{\tau},\dot{\gamma},\dot{\gamma})
\end{equation*}
\end{lemma}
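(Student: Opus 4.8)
The plan is to compute $\dot C$ directly from its definition $C_{ki} = v_\alpha\, g(I_\alpha X_k, X_i)$ using the fact that $\dot f = \{H, f\}$ for functions on $T^*M$ and the Leibniz rule for the Poisson bracket. Working in the Fermi frame along $\gamma(t)$, the horizontal vector fields $X_i$ satisfy $\nabla_{X_j}X_i|_{\gamma(t)} = 0$, so in particular $(\nabla_{X_j} I_\alpha) X_i|_{\gamma(t)} = 0$ by the last display of Lemma \ref{fermi_frame}; hence the only contribution to $\dot C_{ki}$ comes from differentiating the coefficients $v_\alpha$. Thus $\dot C_{ki} = \dot v_\alpha\, g(I_\alpha X_k, X_i)$, with the term involving the derivative of $g(I_\alpha X_k, X_i)$ vanishing along the extremal.

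The second ingredient is already recorded: Lemma \ref{dtv} gives $\dot v_\tau = \{H, v_\tau\} = u_j\{u_j, v_\tau\} = u_j T(\xi_\tau, X_j, \dot\gamma) = T(\xi_\tau, \dot\gamma, \dot\gamma)$. Substituting this into the expression for $\dot C_{ki}$ and renaming the summation index $\alpha$ to $\tau$ yields
$$\dot C_{ki} = -g(I_\tau X_k, X_i)\, T(\xi_\tau, \dot\gamma, \dot\gamma),$$
where the overall sign comes from the sign convention in passing from $g(I_\tau X_k, X_i)$ as written here — one should double-check the placement: since $C_{ki} = v_\tau g(I_\tau X_k, X_i)$ is already the definition, the result is literally $\dot v_\tau\, g(I_\tau X_k, X_i)$, and the minus sign in the statement must be absorbed into how $T(\xi_\tau,\dot\gamma,\dot\gamma)$ relates to $\dot v_\tau$; reconciling this sign is the one bookkeeping point that needs care, and it is settled by comparing directly with equation \eqref{dotv}.

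I do not expect a genuine obstacle here: the whole computation is two lines once the Fermi-frame cancellation is invoked, and both the cancellation (Lemma \ref{fermi_frame}) and the formula for $\dot v_\tau$ (Lemma \ref{dtv}, equation \eqref{dotv}) are already available. The only thing to be careful about is ensuring that the term $v_\alpha \overrightarrow{g(I_\alpha X_k, X_i)}$ — equivalently $v_\alpha\, X_j(g(I_\alpha X_k, X_i))\, u_j = v_\alpha\, g((\nabla_{\dot\gamma} I_\alpha) X_k, X_i)$ plus derivative terms in the $X_k, X_i$ directions — genuinely vanishes along $\gamma(t)$; this follows because $\nabla I_\alpha$, $\nabla X_k$, $\nabla X_i$ all vanish along the curve in the Fermi frame, so every piece of this expression is zero at $\gamma(t)$.
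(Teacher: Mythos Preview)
Your approach is correct and is essentially the paper's own argument: compute $\{H,C_{ki}\}$, use the Fermi frame to kill the derivative of $g(I_\alpha X_k,X_i)$ along $\gamma$, and reduce to $\dot v_\tau\, g(I_\tau X_k,X_i)$ via Lemma~\ref{dtv}. On the sign you flag: your bookkeeping is right and comparison with \eqref{dotv} actually yields $\dot C_{ki}=+g(I_\tau X_k,X_i)\,T(\xi_\tau,\dot\gamma,\dot\gamma)$; the minus in the stated lemma (and in the paper's intermediate line) is a slip, but it is harmless downstream since $\dot C$ enters only through $\trace(\dot C)$ and $\trace(\dot C\,A^*A)$ in Lemma~\ref{riccic0}, both of which vanish irrespective of the sign.
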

\begin{proof} By a direct computation 
\begin{align*}
\dot{C}_{ki}=\{H,C_{ki}\}&=-u_j\vec{C}_{ki}(u_j)=-u_jX_j(v_{\alpha}g(I_{\alpha}X_k,X_i)+u_j\partial_{v_{\tau}}(v_{\alpha}g(I_{\alpha}X_k,X_i)\vec{v_{\tau}}(u_j)\\
&=-v_{\alpha}[g(\nabla_{\dot{\gamma}}I_{\alpha}X_k,X_i)+g(I_{\alpha}X_k,\nabla_{\dot{\gamma}}X_i)]-u_jg(I_{\tau}X_k,X_i)T(\xi_{\tau},X_j,\dot{\gamma})\\&=-
g(I_{\tau}X_k,X_i)T(\xi_{\tau},\dot{\gamma},\dot{\gamma})\qedhere
\end{align*}\end{proof}

Finally we derive  the following formula for the trace of the matrix $\mc{R}_{cc}$.
\begin{lemma} \label{riccic0}
We have
\begin{equation*}\label{riccc0}
  \trace(\mc{R}_{cc})=\mathrm{Ric}(\dot{\gamma},\dot{\gamma})- \sum_{\alpha=1}^3
R(\dot\gamma,I_{\alpha}\dot\gamma,I_{\alpha}\dot\gamma,\dot\gamma)+4(n-1)\|v\|^2.
\end{equation*}
\end{lemma}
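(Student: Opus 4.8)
The plan is to compute $\trace(\mc{R}_{cc})$ directly from the closed expression \eqref{rccg}, i.e.\ $\mc{R}_{cc}=Y\bigl[B+\dot{C}+\|v\|^2(1-\dot\gamma\dot\gamma^*)\bigr]Y^*$, handling the three symmetric blocks $B$, $\dot C$ and $\|v\|^2(1-\dot\gamma\dot\gamma^*)$ one at a time. For each block $L$, cyclicity of the trace (valid for the rectangular products involved) together with the identity $Y^*Y=\mathbbold{1}-A^*A$ recorded just before \eqref{fccg} gives $\trace(YLY^*)=\trace\bigl(L(\mathbbold{1}-A^*A)\bigr)$. The only inputs are algebraic relations already at hand: $YY^*=\mathbbold{1}$, $YA^*=0$ from \eqref{uu}, $AA^*=\mathbbold{1}$ and $A\dot\gamma=0$ from \eqref{idav}, Lemma~\ref{dotc}, the mutual orthogonality of $\dot\gamma,I_1\dot\gamma,I_2\dot\gamma,I_3\dot\gamma$ proved in Section~\ref{s:2}, and $\|\dot\gamma\|^2=1$ since the extremal is length-parametrized.

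First I would treat the curvature block. Since $B_{i\ell}=R(\dot\gamma,X_\ell,X_i,\dot\gamma)$, its full trace $\trace(B)=\sum_i R(\dot\gamma,X_i,X_i,\dot\gamma)$ is, by definition, the horizontal qc-Ricci tensor on $\dot\gamma$, that is $\mathrm{Ric}(\dot\gamma,\dot\gamma)$ (cf.\ \eqref{qscs}). For the correction, $\trace(BA^*A)=\trace(ABA^*)$, and using $\sum_i g(I_\alpha\dot\gamma,X_i)X_i=I_\alpha\dot\gamma$ (as $I_\alpha\dot\gamma$ is horizontal) to contract the two horizontal slots of $B$ collapses $\trace(ABA^*)$ to $\sum_{\alpha=1}^3 R(\dot\gamma,I_\alpha\dot\gamma,I_\alpha\dot\gamma,\dot\gamma)$. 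Hence the $B$ block contributes exactly $\mathrm{Ric}(\dot\gamma,\dot\gamma)-\sum_{\alpha=1}^3 R(\dot\gamma,I_\alpha\dot\gamma,I_\alpha\dot\gamma,\dot\gamma)$.

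Next, the torsion block vanishes in the trace. By Lemma~\ref{dotc}, $\dot C_{ki}=-g(I_\tau X_k,X_i)\,T(\xi_\tau,\dot\gamma,\dot\gamma)$ (summed over $\tau$), so $\trace(\dot C)=-\sum_\tau T(\xi_\tau,\dot\gamma,\dot\gamma)\trace(I_\tau)=0$, while $\trace(\dot C A^*A)=\trace(A\dot C A^*)$ reduces, by the same horizontal-slot contraction, to a combination of the scalars $g(I_\tau I_\beta\dot\gamma,I_\beta\dot\gamma)$; each of these vanishes, since $I_\beta^2\dot\gamma=-\dot\gamma\perp I_\beta\dot\gamma$ when $\tau=\beta$, and $I_\tau I_\beta\dot\gamma$ is proportional to $I_\zeta\dot\gamma\perp I_\beta\dot\gamma$ when $\tau\neq\beta$ ($\zeta$ the third index). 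Thus $\trace(Y\dot C Y^*)=0$. Finally, for the third block, $\trace(Y\,\mathbbold{1}\,Y^*)=\trace(YY^*)=4n-3$ and $\trace(Y\dot\gamma\dot\gamma^*Y^*)=\dot\gamma^*(\mathbbold{1}-A^*A)\dot\gamma=\|\dot\gamma\|^2-\|A\dot\gamma\|^2=1$, so it contributes $\|v\|^2\bigl((4n-3)-1\bigr)=4(n-1)\|v\|^2$. Summing the three contributions gives the asserted identity.

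I do not expect a genuine obstacle: granted \eqref{rccg} and Lemma~\ref{dotc}, the argument is linear algebra with identities already established. The points requiring some care are the bookkeeping of the rectangular products $Y(\cdot)Y^*$ — using $Y^*Y=\mathbbold{1}-A^*A$ on one side and $YY^*=\mathbbold{1}$ on the other — and correctly recognizing the two curvature contractions as $\mathrm{Ric}(\dot\gamma,\dot\gamma)$ and $\sum_\alpha R(\dot\gamma,I_\alpha\dot\gamma,I_\alpha\dot\gamma,\dot\gamma)$ straight from their definitions.
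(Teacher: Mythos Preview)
Your proposal is correct and follows essentially the same approach as the paper's own proof: starting from \eqref{rccg}, applying cyclicity of the trace together with $Y^*Y=\mathbbold{1}-A^*A$, and treating the three blocks $B$, $\dot C$, and $\|v\|^2(\mathbbold{1}-\dot\gamma\dot\gamma^*)$ separately. Your write-up is slightly more explicit about the bookkeeping (e.g.\ the rectangular trace identities and the unit-speed assumption), but the argument is the same.
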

\begin{proof} Indeed, we obtain from \eqref{rccg}
\begin{align*}
 \trace(\mc{R}_{cc})&=\trace((B  +\dot{C}+ \|v\|^2(\mathbbold{1}-\dot\gamma\dot\gamma^*))Y^*Y)\\
 & = \trace((B + \dot{C}+ \|v\|^2(\mathbbold{1}-\dot\gamma\dot\gamma^*))(\mathbbold{1}-A^*A))\\
 & = \sum_{i=1}^{4n}R(\dot\gamma,X_i,X_i,\dot\gamma) - \sum_{\alpha=1}^{3} R(\dot\gamma,I_{\alpha}\dot\gamma,I_{\alpha}\dot\gamma,\dot\gamma)+(4n-4)\|v\|^2\\
& =\mathrm{Ric}(\dot{\gamma},\dot{\gamma})- \sum_{\alpha=1}^3
R(\dot\gamma,I_{\alpha}\dot\gamma,I_{\alpha}\dot\gamma,\dot\gamma)+4(n-1)\|v\|^2.
\end{align*}
where we used $\mathrm{tr}(\dot{C}\mathbbold{1})=0$ and
\begin{align*}\mathrm{tr}(\dot{C}A^*A)=\dot{C}_{ki}A_{i\beta}A_{\beta k}&=-T(\xi_{\tau},\dot{\gamma},\dot{\gamma})g(I_{\tau}X_k,X_i)g(I_{\beta}\dot{\gamma},X_i)g(I_{\beta}\dot{\gamma},X_k)\\&=
T(\xi_{\tau},\dot{\gamma},\dot{\gamma})g(I_{\beta}\dot{\gamma},I_{\tau}I_{\beta}\dot{\gamma})=0. \qedhere
\end{align*}
\end{proof}
\section{Proof of Theorems \ref{t:main1} and \ref{t:main2}}\label{s:proofmain}

Theorem \ref{t:main1} is a direct consequence of Theorem \ref{t:BRmain} and  Lemma~\ref{riccic0}.
To prove Theorem \ref{t:main2} we use the following Lemma to rewrite the condition in Theorem~\ref{t:main1}.
\begin{lemma}\label{t0u} We have the identity
\begin{equation}\label{bone-may} \mathrm{Ric}(X,X)- \sum_{\alpha=1}^3 R(X,I_\alpha X,I_\alpha X,X)=2nT^0(X,X)+(4n-8)U(X,X)+2(n-1)S.
\end{equation}
\end{lemma}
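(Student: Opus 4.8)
The plan is to verify the identity \eqref{bone-may} by expanding both sides in terms of the fundamental tensors of the Biquard connection and invoking the curvature decomposition formulas collected in the Appendix. Concretely, I would start from the right-hand side: using the expressions for $\Ric$ and the normalized qc-scalar curvature $S$ in terms of the torsion tensors $T^0$ and $U$ (formula \eqref{sixtyfour} in the Appendix), rewrite $2nT^0(X,X)+(4n-8)U(X,X)+2(n-1)S$ purely in terms of $\Ric(X,X)$, $S$ and traces of $T^0, U$, so that the claimed identity becomes a statement about the single quantity $\sum_{\alpha=1}^3 R(X,I_\alpha X,I_\alpha X,X)$.

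Next I would compute $\sum_{\alpha=1}^3 R(X,I_\alpha X,I_\alpha X,X)$ directly. The natural route is to exploit the curvature identities \eqref{d3n5}, \eqref{d3n6} (already used in the proof of Lemma \ref{l7}) which express mixed curvature components in terms of covariant derivatives of $T^0$ and $U$ and the tensors $\rho_\alpha$; together with the first Bianchi identity for the Biquard connection and the $\mathrm{Sp}(n)\mathrm{Sp}(1)$-equivariance of $R$, one reduces $R(X,I_\alpha X,I_\alpha X,X)$ to horizontal Ricci-type traces plus torsion terms. Summing over $\alpha=1,2,3$ and using the quaternionic relations \eqref{eq:relaz1} to collapse the $I_\alpha$'s, the derivative terms should cancel in pairs (they are antisymmetrized the right way after the cyclic sum), leaving an algebraic combination of $\Ric(X,X)$, $S$, $T^0(X,X)$ and $U(X,X)$. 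Matching coefficients then yields \eqref{bone-may}. One can alternatively contract the general identity against a $g$-orthonormal basis to cross-check the trace (which must reproduce the scalar identity $\trace$ computed in Lemma \ref{riccic0}), giving a useful consistency test.

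The main obstacle I anticipate is bookkeeping: keeping careful track of the several trace-free conditions on $T^0$ and $U$ (tracelessness, commutation with $I_1,I_2,I_3$, the symmetry properties recorded after \eqref{reeb1}), and ensuring that the numerous covariant-derivative terms arising from \eqref{d3n5}--\eqref{d3n6} genuinely cancel after the cyclic sum rather than merely being reorganized. A secondary subtlety is the dependence of the coefficients $2n$, $4n-8$, $2(n-1)$ on the dimension: these must emerge correctly from the traces over the $4n$-dimensional horizontal space and from the $3$-fold sum over $\alpha$, so I would track the $n$-dependence explicitly at each contraction. Since the statement is an algebraic identity valid pointwise, no analysis is involved; the proof is a (careful) computation that can be organized cleanly by first isolating the purely algebraic part of $R$ and treating the torsion-derivative part separately, showing the latter contributes zero to the symmetrized expression $\sum_\alpha R(X,I_\alpha X,I_\alpha X,X)$.
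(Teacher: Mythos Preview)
Your high-level strategy---compute $\sum_{\alpha} R(X,I_\alpha X,I_\alpha X,X)$ in terms of $T^0$, $U$, $S$ and then combine with the Ricci formula in \eqref{sixtyfour}---is exactly what the paper does. However, the specific tool you plan to invoke is wrong: identities \eqref{d3n5} and \eqref{d3n6} compute curvature components of the form $R(\xi_\alpha,X,Y,Z)$ with a \emph{vertical} first argument, whereas $R(X,I_\alpha X,I_\alpha X,X)$ is purely horizontal. Those formulas simply do not apply here, and following them would not produce the expression you need.

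The correct key identity is \eqref{comp1}, which relates $3R(X,Y,Z,V)-\sum_\alpha R(I_\alpha X,I_\alpha Y,Z,V)$ to an \emph{algebraic} combination of $T^0$, $U$, $S$ (no covariant derivatives at all). The paper substitutes $Y=Z=I_\alpha X$, $V=X$ into \eqref{comp1}, sums over $\alpha$, and then uses the first Bianchi identity \eqref{bian01} to evaluate the cross terms $R(I_\beta X,I_\tau X,I_\alpha X,X)$ that appear. Because \eqref{comp1} is purely algebraic, your anticipated ``cancellation of derivative terms after the cyclic sum'' is a non-issue: there are no derivative terms to begin with, and the bookkeeping is much lighter than you expect. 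The outcome is the closed formula $\sum_\alpha R(X,I_\alpha X,I_\alpha X,X)=2T^0(X,X)+18U(X,X)+6S$, which subtracted from the first line of \eqref{sixtyfour} gives \eqref{bone-may} immediately.
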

\begin{proof}
 Fix a unit vector $X$ and set $Y=Z=I_1X$, $V=X$ into \eqref{comp1}. One obtains the following  expression
\begin{align*}
3R(X,I_1X,I_1X,X)&+R(I_1X,X,I_1X,X)+R(I_2X,I_3X,I_1X,X)-R(I_3X,I_2X,I_1X,X)\\
&=2R(X,I_1X,I_1X,X)+2R(I_2X,I_3X,I_1X,X)\\
&=4(T^0(X,X)+T^0(I_1X,I_1X))
+8U(X,X)+4S.
\end{align*}
Do the same for $I_2$, $I_3$ and sum the obtained equalities, we obtain applying  the first Bianchi identity for the Biquard connection \eqref{bian01}.
\begin{align*}
2R(X,I_1X,I_1X,X)&+2R(I_2X,I_3X,I_1X,X)+2R(X,I_2X,I_2X,X)\\
&+2R(I_3X,I_1X,I_2X,X)+2R(X,I_3X,I_3X,X)+2R(I_1X,I_2X,I_3X,X)\\
&=2\sum_{\alpha=1}^3R(X,I_{\alpha}X,I_{\alpha}X,X)+2b(I_1X,I_2X,I_3X,X)\\
&=4\Big[3T^0(X,X)+\sum_{\alpha=1}^3T^0(I_{\alpha}X,I_{\alpha}X)\Big]+24U(X,X)+12S\\&=
8T^0(X,X)+24U(X,X)+12S.
\end{align*}
From \eqref{bian01} we have $b(I_1X,I_2X,I_3X,X)=2T^0(X,X)-6U(X,X)$.
Hence,
\begin{align}\label{seccurv}
\sum_{\alpha=1}^3R(X,I_{\alpha}X,I_{\alpha}X,X)&=6S+4T^0(X,X)+12U(X,X)-2T^0(X,X)+6U(X,X)\\
&=2T^0(X,X)+18U(X,X)+6S.\nn
\end{align}
 and the first identity of \eqref{sixtyfour} combined with \eqref{seccurv} gives \eqref{bone-may}.
\end{proof}

Now, Theorem \ref{t:main2} follows from Theorem \ref{t:main1} and Lemma~\ref{t0u}.

\appendix

\section{Some technical facts and useful identities}
\label{s:quatapp}

Here we recall some properties of the torsion and curvature of the Biquard connection. See also \cite{Biq1,IMV,IV,IMV2,IPV1} for a comprehensive exposition. 
%
%

\subsection{Invariant decompositions}

Any endomorphism $\Psi$ of $\distr$ can be decomposed with respect to
the quaternionic structure $(\mathbb{Q},g)$ uniquely into four
$\mathrm{Sp}(n)$-invariant
parts 
$\Psi=\Psi^{+++}+\Psi^{+--}+\Psi^{-+-}+\Psi^{--+},$ 
where $\Psi^{+++}$ commutes with all three $I_i$, $\Psi^{+--}$
commutes with
$I_1$ and anti-commutes with the others two, etc. The two $\mathrm{Sp}(n)\mathrm{Sp}(1)$%
-invariant components \index{$\mathrm{Sp}(n)\mathrm{Sp}(1)$-invariant
components!$\Psi_{[3]}$} \index{$\mathrm{Sp}(n)\mathrm{Sp}(1)$-invariant
components!$\Psi_{[-1]}$} are given by
\begin{equation*}
\Psi_{[3]}=\Psi^{+++},\quad
\Psi_{[-1]}=\Psi^{+--}+\Psi^{-+-}+\Psi^{--+}.
\end{equation*}
They are the projections on the eigenspaces of the Casimir
operator $
\Upsilon =\ I_1\otimes I_1\ +\ I_2\otimes I_2\ +\ I_3\otimes I_3,
$
corresponding, respectively, to the eigenvalues
$3$ and $-1$, see \cite{CSal}. Note here that each of the three
2-forms $\omega_s$ belongs to the $[-1]$-component and constitute a
basis of the Lie algebra $\mathrm{sp}(1)$.

If $n=1$ then the space of symmetric endomorphisms commuting with
all $I_{\alpha}$ is 1-dimensional, i.e., the $[3]$-component of any
symmetric endomorphism $\Psi
$ on $\distr$ is proportional to the identity, $\Psi_{[3]}=-%
\frac{tr\Psi}{4}\id_{|H}$.

\subsection{The torsion tensor}

The torsion endomorphism $T_{\xi }=T(\xi ,\cdot ):H\rightarrow
H,\quad \xi \in V$ will be decomposed into its symmetric part
$T_{\xi }^{0}$ and skew-symmetric part $b_{\xi },T_{\xi }=T_{\xi
}^{0}+b_{\xi }$. Biquard
showed in \cite{Biq1} that the torsion $T_{\xi }$ is completely trace-free, $%
tr\,T_{\xi }=tr\,T_{\xi }\circ I_{{\alpha}}=0$, its symmetric part
has the properties 
\begin{gather*}
T_{\xi
_{{\alpha}}}^{0}I_{{\alpha}}=-I_{{\alpha}}T_{\xi
_{{\alpha}}}^{0},\quad  I_{2}(T_{\xi _{2}}^{0})^{+--}=I_{1}(T_{\xi
_{1}}^{0})^{-+-}, \\ I_{3}(T_{\xi _{3}}^{0})^{-+-}=I_{2}(T_{\xi
_{2}}^{0})^{--+}, \quad I_{1}(T_{\xi _{1}}^{0})^{--+}=I_{3}(T_{\xi
_{3}}^{0})^{+--}.
\end{gather*}
The skew-symmetric part can be represented as
$b_{\xi _{{\alpha}}}=I_{{\alpha}}U$, where $U
$ is a traceless symmetric (1,1)-tensor on $\distr$ which commutes with $%
I_{1},I_{2},I_{3}$. Therefore we have $T_{\xi _{{\alpha}}}=T_{\xi
_{{\alpha}}}^{0}+I_{{\alpha}}U$. When $n=1$ the tensor $U$
vanishes identically, $U=0$, and the torsion is a symmetric
tensor, $T_{\xi }=T_{\xi }^{0}$.
The two $\mathrm{Sp}(n)\mathrm{Sp}(1)$-invariant trace-free symmetric 2-tensors on
$\distr$
\begin{equation}  \label{Tcompnts}
T^0(X,Y)= g((T_{\xi_1}^{0}I_1+T_{\xi_2}^{0}I_2+T_{
\xi_3}^{0}I_3)X,Y) \ \text{ and }\ U(X,Y) =g(uX,Y)
\end{equation}
were introduced in \cite{IMV} and enjoy the properties
\begin{equation}  \label{propt}
\begin{aligned} T^0(X,Y)+T^0(I_1X,I_1Y)+T^0(I_2X,I_2Y)+T^0(I_3X,I_3Y)=0, \\
U(X,Y)=U(I_1X,I_1Y)=U(I_2X,I_2Y)=U(I_3X,I_3Y). \end{aligned}
\end{equation}
From \cite[Proposition~2.3]{IV} we have
\begin{equation}  \label{need}
4T^0(\xi_{\alpha},X,Y)=-T^0(I_{\alpha}X,Y)-T^0(X,I_{\alpha}Y),
\end{equation}
hence, taking into account \eqref{need} it follows
\begin{equation}  \label{need1}
T(\xi_{\alpha},X,Y)
=-\frac14\Big[T^0(I_{\alpha}X,Y)+T^0(X,I_{\alpha}Y)\Big]+U(I_{\alpha}X,Y).
\end{equation}

Any 3-Sasakian manifold has zero torsion endomorphism, and the
converse is true if in addition the qc-Einstein curvature (see
\eqref{qscs}) is a positive constant \cite{IMV}.

\subsection{Torsion and curvature}

Let $R=[\nabla,\nabla]-\nabla_{[\ ,\ ]}$ be the curvature tensor
of $\nabla$ and the dimension is $4n+3$. We denote the curvature
tensor of type (0,4)
and the torsion tensor of type (0,3) by the same letter, $%
R(A,B,C,D):=g(R(A,B)C,D),\quad T(A,B,C):=g(T(A,B),C)$, $A,B,C,D
\in \Gamma(TM)$. {The \emph{qc-Ricci tensor} $Ric$,
\emph{normalized qc-scalar curvature} $S$, \emph{qc-Ricci forms}
$\rho_{\alpha}$  of the Biquard connection are defined,
respectively, by the following formulas
\begin{equation}  \label{qscs}
\begin{aligned} &
\mathrm{Ric}(A,B)=\sum_{i=1}^{4n}R(X_i,A,B,X_i),\quad  4n\rho_{\alpha}(A,B)=\sum_{b=1}^{4n}R(A,B,X_i,I_{\alpha}X_i),\\ &
 8n(n+2)S=\sum_{i=1}^{4n}\mathrm{Ric}(X_i,X_i)=\sum_{i,j=1}^{4n}R(X_j,X_i,X_i,X_j),
\end{aligned}
\end{equation}
where $X_1,\dots,X_{4n}$ is an orthonormal basis of $\distr$.
}

A qc structure is said to be qc-Einstein if the horizontal
qc-Ricci tensor is a scalar multiple of the metric,
$
\mathrm{Ric}(X,Y)=2(n+2)Sg(X,Y). $

As shown in \cite{IMV,IMV2} the qc-Einstein condition is
equivalent to the vanishing of the torsion endomorphism of the
Biquard connection. In this case $S$ is constant and the vertical
distribution is integrable. It is also worth recalling that the
horizontal qc-Ricci tensors and the integrability of the vertical
distribution can be expressed in terms of the torsion of the
Biquard connection \cite{IMV} (see also \cite{IMV1,IV,IPV1}). For
example, we have

\begin{equation}  \label{sixtyfour}
\begin{aligned} & \mathrm{Ric}(X,Y)
=(2n+2)T^0(X,Y)+(4n+10)U(X,Y)+2(n+2)Sg(X,Y),\\
& \rho_{\alpha}(X,I_{\alpha}Y) \  =\
-\frac12\Bigl[T^0(X,Y)+T^0(I_{\alpha}X,I_{\alpha}Y)\Bigr]-2U(X,Y)-%
Sg(X,Y),
\\
&
T(\xi_{\alpha},\xi_{\beta})=-S\xi_{\tau}-[\xi_{\alpha},\xi_{\beta}]_{|H},
\qquad S = -g(T(\xi_1,\xi_2),\xi_3),\\ &
g(T(\xi_{\alpha},\xi_{\beta}),X)=-\rho_{\tau}(I_{\alpha}X,\xi_{\alpha})=-\rho_{\tau}(I_{\beta}X,\xi_{\beta})=-g([\xi_{\alpha},%
\xi_{\beta}],X). \end{aligned}
\end{equation}
Note that for $n=1$ the above formulas hold with $U=0$.

\subsection{Bianchi identity}
We shall also need the first Bianchi identity and the general formula for the curvature
\cite{IMV,IV,IV2}.

The first Bianchi identity for the Biquard connection reads
\begin{align}  \label{bian01}
b(X,Y,Z,V)&=\sum_{(X,Y,Z)}R(X,Y,Z,V)=\sum_{(X,Y,Z)}\Bigl\{ (\nabla_XT)(Y,Z,V) +
T(T(X,Y),Z,V)\Bigr\}\\
&=\sum_{(X,Y,Z)}
T(T(X,Y),Z,V)=2\sum_{(X,Y,Z)}\sum_{\alpha=1}^3g(I_{\alpha}X,Y)
T(\xi_{\alpha},Z,V)\nn.
\end{align} where $\sum_{(X,Y,Z)}$ denotes the cyclic sum over $\{X,Y,Z\}$ and we used that  $(\nabla_XT)(Y,Z,V) =0$ for  horizontal vectors.

We also have the identities,  cf. \cite[Theorem 3.1]{IV} or \cite[Theorem 4.3.11]{IV2},
\begin{multline}\label{comp1}
 3R(X,Y,Z,V)-R(I_1X,I_1Y,Z,V)-R(I_2X,I_2Y,Z,V)-R(I_3X,I_3Y,Z,V)\\
= 2\Big[g(Y,Z)T^0(X,V)+g(X,V)T^0(Z,Y)-g(Z,X)T^0(Y,V)- g(V,Y)T^0(Z,X)\Big]
\\
-2\sum_{\alpha=1}^3\Big[\omega_\alpha(Y,Z)T^0(X,I_\alpha V)+\omega_\alpha(X,V)T^0(Z,I_\alpha Y)-\omega_\alpha(Z,X)T^0(Y,I_\alpha V)-
\omega_\alpha(V,Y)T^0(Z,I_\alpha X)\Big]
\\
+\sum_{\alpha=1}^3\Big[2\omega_\alpha(X,Y)\Big(T^0(Z,I_\alpha V)-T^0(I_\alpha Z,V)\Big)-8\omega_\alpha(Z,V)U(I_\alpha X,Y)-4S
\omega_\alpha(X,Y)\omega_\alpha(Z,V)\Big];
\end{multline}
\begin{align}  \label{d3n5}
R(\xi _{\alpha},X,Y,Z)&=-(\nabla _{X}U)(I_{\alpha}Y,Z)+\omega
_{\beta}(X,Y)\rho
_{\tau}(I_{\alpha}Z,\xi _{\alpha}) \\\nn
&\ -\omega _{\tau}(X,Y)\rho _{\beta}(I_{\alpha}Z,\xi
_{\alpha})-\frac{1}{4}\Big[(\nabla
_{Y}T^{0})(I_{\alpha}Z,X)+(\nabla _{Y}T^{0})(Z,I_{\alpha}X)\Big] \\ \nn
&\ +\frac{1}{4}\Big[(\nabla _{Z}T^{0})(I_{\alpha}Y,X)+(\nabla _{Z}T^{0})(Y,I_{\alpha}X)%
\Big] -\omega _{\beta}(X,Z)\rho _{\tau}(I_{\alpha}Y,\xi _{\alpha}) \\ \nn
&\ +\omega _{\tau}(X,Z)\rho _{\beta}(I_{\alpha}Y,\xi
_{\alpha})-\omega _{\beta}(Y,Z)\rho _{\tau}(I_{\alpha}X,\xi
_{\alpha}) +\omega _{\tau}(Y,Z)\rho _{\beta}(I_{\alpha}X,\xi
_{\alpha}),
\end{align}%
where the Ricci two forms are given by
\begin{equation}
\begin{aligned} &6(2n+1)\rho_{\alpha}(\xi_{\alpha},X)=(2n+1)X(S)+\frac12
(\nabla_{e_a}T^0)[(e_a,X)-3(I_{\alpha}e_a,I_{\alpha}X)]\\&\hskip3.5in
-2(\nabla_{e_a}U)(e_a,X),\\ &
6(2n+1)\rho_{\alpha}(\xi_{\beta},I_{\tau}X)=(2n-1)(2n+1)X(S)-\frac{4n+1}{2}(
\nabla_{e_a}T^0)(e_a,X)\\ & \hskip1.5in
-\frac{3}{2}(\nabla_{e_a}T^0)(I_{\alpha}e_a,I_{\alpha}X)-4(n+1)(\nabla_{e_a}U)(e_a,X)
.\end{aligned}  \label{d3n6}
\end{equation}

\subsection{Fermi frame} \label{s:fermia}
Here we prove the existence of Fermi frame. We recall the statement

\begin{lemma}\label{fermi_framea}
Given a geodesic $\gamma(t)$, there exists a $\mathbb Q$-orthonormal frame, i.e., a horizontal
frame $X_i$, $i \in \{1,\dots,4n\}$,
and vertical frame $\xi_{\alpha}$, $\alpha=1,2,3$
in a neighborhood of $\gamma(0)$, such that for all $\alpha, \beta
\in\{1,2,3\}$ and $i,j\in\{1,\dots,4n\}$,   
   \begin{itemize}
        \item[(i)] the frame is orthonormal for the Riemannian metric $g+\sum_{\beta}\eta_{\beta}^2$,
        \item[(ii)] $\nabla_{X_i} X_j|_{\gamma(t)} = \nabla_{\xi_\alpha} X_j|_{\gamma(t)} =\nabla_{X_i} \xi_\beta|_{\gamma(t)}=\nabla_{\xi_{\alpha}} \xi_\beta|_{\gamma(t)}
        =0$.
    \end{itemize}
     In particular, for all $\alpha, \beta,\tau 
\in\{1,2,3\}$ and $i,j\in\{1,\dots,4n\}$
$$((\nabla_{X_i}I_{\alpha})X_j)|_{\gamma(t)}=((\nabla_{X_i}I_{\alpha})\xi_{\beta})|_{\gamma(t)}=((\nabla_{\xi_{\beta}}I_{\alpha})X_j)|_{\gamma(t)}
=((\nabla_{\xi_{\beta}}I_{\alpha})\xi_{\tau})|_{\gamma(t)}=0.$$
\end{lemma}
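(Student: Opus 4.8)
The plan is to carry out the classical Fermi-frame construction along $\gamma$, but with the Biquard connection $\nabla$ itself in place of a Levi-Civita connection. The two structural inputs are: (a) $\nabla$ is metric for the Riemannian extension $\bar g:=g+\sum_{\beta}\eta_\beta^2$ — indeed $\bar g|_{\distr}=g$ with $\nabla g=0$, $\nabla$ preserves the splitting $\distr\oplus V$, and $\nabla\varphi=0$ forces $\nabla$ to act on $V$ through $\mathrm{sp}(1)$-valued connection $1$-forms, i.e.\ by $\bar g$-skew endomorphisms, so $\nabla\bar g=0$; (b) although $\nabla$ has torsion, the construction uses only $\nabla$-geodesics and $\nabla$-parallel transport, which are available for any affine connection, and the torsion would only obstruct producing a \emph{coordinate} frame with vanishing connection coefficients, which is not what is asked here.

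Concretely: at $\gamma(0)$ pick a $g$-orthonormal basis $X_1(0),\dots,X_{4n}(0)$ of $\distr_{\gamma(0)}$ and let $\xi_1(0),\xi_2(0),\xi_3(0)$ be the Reeb triple at $\gamma(0)$, so $\varphi(\xi_\alpha(0))=I_\alpha|_{\gamma(0)}$. Extend this frame first by $\nabla$-parallel transport along $\gamma$, and then to a tubular neighborhood $\mathcal U$ of $\gamma(0)$ (or of any subinterval on which $\gamma$ is embedded) by $\nabla$-parallel transport along the $\nabla$-geodesics $s\mapsto\exp^{\nabla}_{\gamma(t)}(sw)$ issuing from $\gamma(t)$, with $w$ in a neighborhood of $0$ in the $\bar g$-orthogonal complement $\dot\gamma(t)^{\perp}\subset T_{\gamma(t)}M$. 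Since $\bar g(\dot\gamma(t),\dot\gamma(t))=1$, the map $(t,w)\mapsto\exp^{\nabla}_{\gamma(t)}(w)$ has invertible differential at every point $(t,0)$ of the zero section, hence is a local diffeomorphism there, and this yields a smooth frame $\{X_i,\xi_\alpha\}$ on $\mathcal U$.

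Because $\nabla$ preserves $\distr\oplus V$ and $\nabla\bar g=0$, parallel transport keeps each $X_i$ horizontal, each $\xi_\alpha$ vertical, and the whole frame $\bar g$-orthonormal on all of $\mathcal U$, which is (i). For (ii): along $\gamma$ one has $\nabla_{\dot\gamma}X_j=\nabla_{\dot\gamma}\xi_\alpha=0$ by construction, and for $w\in\dot\gamma(t)^{\perp}$ the frame is $\nabla$-parallel along $s\mapsto\exp^{\nabla}_{\gamma(t)}(sw)$, so $\nabla_w X_j|_{\gamma(t)}=\nabla_w\xi_\alpha|_{\gamma(t)}=0$; since $\dot\gamma(t)$ and $\dot\gamma(t)^{\perp}$ span $T_{\gamma(t)}M$ and $\nabla_{(\cdot)}$ is tensorial in its subscript, $\nabla_v X_j|_{\gamma(t)}=\nabla_v\xi_\alpha|_{\gamma(t)}=0$ for every $v\in T_{\gamma(t)}M$, in particular for $v=X_i,\xi_\alpha$, which is (ii). Finally, take as the frame of $\mathbb{Q}$ the triple $I_\alpha:=\varphi(\xi_\alpha)$; this is an admissible hypercomplex triple on $\mathcal U$ agreeing with the original one at $\gamma(0)$, because $\nabla$ preserves $\mathbb{Q}$ together with its defining relations, and $\nabla\varphi=0$ gives $\nabla_W I_\alpha=\varphi(\nabla_W\xi_\alpha)$ for every $W$. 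Hence (ii) forces $\nabla_{X_i}I_\alpha|_{\gamma}=\nabla_{\xi_\beta}I_\alpha|_{\gamma}=0$ as endomorphisms, so $((\nabla_{X_i}I_\alpha)X_j)|_\gamma=((\nabla_{X_i}I_\alpha)\xi_\beta)|_\gamma=((\nabla_{\xi_\beta}I_\alpha)X_j)|_\gamma=((\nabla_{\xi_\beta}I_\alpha)\xi_\tau)|_\gamma=0$.

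The only point that is not purely mechanical is to run the construction with the Biquard connection rather than an auxiliary Levi-Civita one (the torsion being harmless here, since we work in a genuine frame and not in coordinates) and to link the quaternionic triple to the vertical frame through $\varphi$, so that $\nabla\varphi=0$ transports the vanishing of $\nabla\xi_\alpha$ along $\gamma$ to $\nabla I_\alpha$; the rest is the standard Fermi-frame argument.
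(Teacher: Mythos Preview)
Your proof is correct and follows essentially the same approach as the paper: both invoke the standard Fermi normal frame construction along a curve for a metric connection that preserves the splitting $\distr\oplus V$. The paper phrases the construction as a gauge equation for $o^a_b\in\mathrm{Sp}(n)$ and $o^\beta_\alpha\in\mathrm{Sp}(1)$ and cites an external reference for existence, whereas you carry out the equivalent parallel-transport construction explicitly; your use of $I_\alpha:=\varphi(\xi_\alpha)$ together with $\nabla\varphi=0$ to deduce $\nabla I_\alpha|_\gamma=0$ makes explicit a step the paper leaves implicit.
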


\begin{proof}
Since $\nabla$ preserves the splitting $H\oplus V$ we can apply
the standard arguments for the existence of a Fermi normal frame
along a smooth curve with respect to a metric connection (see e.g.,
\cite{BIliev}). We sketch the proof for completeness.

Let $\{\tilde X_1,\dots,\tilde
X_{4n},\tilde\xi_1,\tilde\xi_2,\tilde\xi_3\}$ be a $\mathbb
Q$-orthonormal basis around $p=\gamma(0)$ such that $\tilde
X_{a_|p}=X_a(p)$ and $\tilde\xi_{{\alpha}_|p}=\xi_{\alpha}(p)$. We
want to find a modified  frame $X_a=o^b_a\tilde X_b$ and $\xi_{\alpha}=o^{\tau}_{\alpha}\tilde\xi_{\tau},$ which satisfies
the normality conditions along the smooth geodesic $\gamma(t)$.

Let $\varpi$ be the $\mathrm{sp}(n)\oplus \mathrm{sp}(1)$-valued connection 1-forms
with respect to the frame
$\tilde X_1,\dots,\tilde
X_{4n}$, and $\tilde\xi_1,\tilde\xi_2,\tilde\xi_3$, namely 
$$\nabla_{ D}\tilde
X_b=\varpi^c_b(D)\tilde X_c,\quad \nabla_{D}\tilde\xi_{\alpha}=\varpi^{\tau}_{\alpha}(D)\tilde\xi_{\tau},\qquad 
D\in\Gamma(TM).$$ Consequently, we have
$$\nabla_{ D} X_b=\omega^c_b(D)
X_c=[D(o^a_b)+o^d_b\varpi^a_d(D)](o^{-1})^c_aX_c,$$$$
\nabla_{D}\xi_{\beta}=\omega^{\tau}_{\beta}(D)\xi_{\tau}=[D(o^{\alpha}_{\beta})+o^{\nu}_{\beta}\varpi^{\alpha}_{\nu}(D)](o^{-1})^{\tau}_{\alpha}X_{\tau}$$

Since the Biquard connection preserves the splitting $H\oplus V$,
the existence of a Fermi normal frame along $\gamma(t)$ is
equivalent to the existence of a smooth solution to the system
$$[D(o^a_b)+o^d_b\varpi^a_d(D)]_{|_{\gamma(t)}}=0, \quad [D(o^{\beta}_{\alpha})+o^{\tau}_{\alpha}\varpi^{\beta}_{\tau}(D)]_{|_{\gamma(t)}}=0.$$
A smooth solution to this system on a small neighborhood along
$\gamma(t)$ exists, see e.g., \cite[Theorem 3.1]{BIliev}. Clearly,
the solution $o^a_b$ belongs to $\mathrm{Sp}(n), o^a_b\in \mathrm{Sp}(n)$ since the
connection 1-forms belong to the Lie algebra $\mathrm{sp}(n)$ and the
solution $o^{\beta}_{\alpha}\in \mathrm{Sp}(1)$ because the connection
1-forms are in the Lie algebra $\mathrm{sp}(1)$.
\end{proof}

\subsection*{Acknowledgements}
The first author has been supported by 
the  ANR project SRGI ``Sub-Riemannian Geometry and Interactions'', contract number ANR-15-CE40-0018. The second author has been partially supported by Contract DFNI
I02/4/12.12.2014 and Contract 195/2016 with the Sofia
University "St.Kl.Ohridski".

\bibliographystyle{abbrv}
\bibliography{quat}

\end{document}